\theoremstyle{plain}
\newtheorem{thm}{\protect\theoremname}
\theoremstyle{plain}
\theoremstyle{plain}
\theoremstyle{remark}
\theoremstyle{plain}
\newtheorem{lem}[thm]{\protect\lemmaname}
\theoremstyle{plain}
\newtheorem{prop}[thm]{\protect\propositionname}
\theoremstyle{remark}
\theoremstyle{remark}
\newtheorem{rem}[thm]{\protect\remarkname}
\theoremstyle{definition}
\newtheorem{defn}[thm]{\protect\definitionname}
\theoremstyle{definition}
\newtheorem{example}[thm]{\protect\examplename}
\theoremstyle{plain}
\newtheorem{cor}[thm]{\protect\corollaryname}
\newcommand{\cellsize}{20}
\newlength{\cellsz} \setlength{\cellsz}{\cellsize\unitlength}
\newsavebox{\cell}
\sbox{\cell}{\begin{picture}(\cellsize,\cellsize)
	\put(0,0){\line(1,0){\cellsize}}
	\put(0,0){\line(0,1){\cellsize}}
	\put(\cellsize,0){\line(0,1){\cellsize}}
	\put(0,\cellsize){\line(1,0){\cellsize}}
	\end{picture}}
\newcommand\cellify[1]{\def\thearg{#1}\def\nothing{}%
	\ifx\thearg\nothing
	\vrule width0pt height\cellsz depth0pt\else
	\hbox to 0pt{\usebox{\cell} \hss}\fi%
	\vbox to \cellsz{
		\vss
		\hbox to \cellsz{\hss$#1$\hss}
		\vss}}
\newcommand\tableau[1]{\vtop{\let\\\cr
		\baselineskip -16000pt \lineskiplimit 16000pt \lineskip 0pt
		\ialign{&\cellify{##}\cr#1\crcr}}}
\newcommand{\kellsize}{20}
\newlength{\kellsz} \setlength{\kellsz}{\kellsize\unitlength}
\newsavebox{\kell}
\sbox{\kell}{\begin{picture}(\kellsize,\kellsize)
	\put(0,0){\line(1,0){\kellsize}}
	\put(0,0){\line(0,1){\kellsize}}
	\put(\kellsize,0){\line(0,1){\kellsize}}
	\put(0,\kellsize){\line(1,0){\kellsize}}
	\end{picture}}
\newcommand\kellify[1]{\def\thearg{#1}\def\nothing{}%
	\ifx\thearg\nothing
	\vrule width0pt height\kellsz depth0pt\else
	\hbox to 0pt{\usebox{\kell} \hss}\fi%
	\vbox to \kellsz{
		\vss
		\hbox to \kellsz{\hss$#1$\hss}
		\vss}}
\newcommand\ktableau[1]{\vtop{\let\\\cr
		\baselineskip -16000pt \lineskiplimit 16000pt \lineskip 0pt
		\ialign{&\kellify{##}\cr#1\crcr}}}
\newcommand{\sellsize}{10}
\newlength{\sellsz} \setlength{\sellsz}{\sellsize\unitlength}
\newsavebox{\sell}
\sbox{\sell}{\begin{picture}(\sellsize,8)
	\put(0,0){\line(1,0){\sellsize}}
	\put(0,0){\line(0,1){\sellsize}}
	\put(\sellsize,0){\line(0,1){\sellsize}}
	\put(0,\sellsize){\line(1,0){\sellsize}}
	\end{picture}}
\newcommand\sellify[1]{\def\thearg{#1}\def\nothing{}%
	\ifx\thearg\nothing
	\vrule width0pt height\sellsz depth0pt\else
	\hbox to 0pt{\usebox{\sell} \hss}\fi%
	\vbox to \sellsz{
		\vss
		\hbox to \sellsz{\hss$#1$\hss}
		\vss}}
\newcommand\stableau[1]{\vtop{\let\\\cr
		\baselineskip -16000pt \lineskiplimit 16000pt \lineskip 0pt
		\ialign{&\sellify{##}\cr#1\crcr}}}
\newcommand{\ssellsize}{5}
\newlength{\ssellsz} \setlength{\ssellsz}{\ssellsize\unitlength}
\newsavebox{\ssell}
\sbox{\ssell}{\begin{picture}(\ssellsize,4)
	\put(0,0){\line(1,0){\ssellsize}}
	\put(0,0){\line(0,1){\ssellsize}}
	\put(\ssellsize,0){\line(0,1){\ssellsize}}
	\put(0,\ssellsize){\line(1,0){\ssellsize}}
	\end{picture}}
\newcommand\ssellify[1]{\def\thearg{#1}\def\nothing{}%
	\ifx\thearg\nothing
	\vrule width0pt height\sellsz depth0pt\else
	\hbox to 0pt{\usebox{\ssell} \hss}\fi%
	\vbox to \ssellsz{
		\vss
		\hbox to \ssellsz{\hss$#1$\hss}
		\vss}}
\newcommand\sstableau[1]{\vtop{\let\\\cr
		\baselineskip -16000pt \lineskiplimit 16000pt \lineskip 0pt
		\ialign{&\ssellify{##}\cr#1\crcr}}}
\providecommand{\claimname}{\inputencoding{latin9}Claim}
\providecommand{\conjecturename}{\inputencoding{latin9}Conjecture}
\providecommand{\corollaryname}{\inputencoding{latin9}Corollary}
\providecommand{\definitionname}{\inputencoding{latin9}Definition}
\providecommand{\examplename}{\inputencoding{latin9}Example}
\providecommand{\lemmaname}{\inputencoding{latin9}Lemma}
\providecommand{\notename}{\inputencoding{latin9}Note}
\providecommand{\propositionname}{\inputencoding{latin9}Proposition}
\providecommand{\questionname}{\inputencoding{latin9}Question}
\providecommand{\remarkname}{\inputencoding{latin9}Remark}
\providecommand{\theoremname}{\inputencoding{latin9}Theorem}
\newcommand{\symm}{\mathfrak{S}}
\newcommand\twoheaduparrow{\mathrel{\rotatebox{90}{$\twoheaduparrow$}}}
\newcommand\twoheaddownarrow{\mathrel{\rotatebox{270}{$\twoheaddownarrow$}}}
\begin{document}
	\author{Nick Early}
	\thanks{The author was partially supported by RTG grant NSF/DMS-1148634,\\
		University of Minnesota, email: \href{mailto:earlnick@gmail.com}{earlnick@gmail.com}}

	\title[A Canonical Permutohedral Plate Basis]{Canonical Bases for Permutohedral Plates}
	\maketitle
	\begin{abstract}

		We study three finite-dimensional quotient vector spaces constructed from the linear span of the set of characteristic functions of permutohedral cones by imposing two kinds of constraints: (1) neglect characteristic functions of higher codimension permutohedral cones, and (2) neglect characteristic functions of non-pointed permutohedral cones.  We construct an ordered basis which is canonical, in the sense that it has subsets which map onto ordered bases for the quotients.  We present straightening relations to the canonical basis, and using Laplace transforms we obtain functional representations for each quotient space.
		
		
	


	\end{abstract}
	
	\begingroup
	\let\cleardoublepage\relax
	\let\clearpage\relax
	\tableofcontents
	\endgroup
	
	\section{Introduction}

	The purpose of this paper is two-fold: it contains general combinatorial and geometric results about generalized permutohedra, but from our perspective it is motivated by surprising connections to physics, in particular to the study of scattering amplitudes in quantum field theory and string theory.

	This paper is devoted to the combinatorial analysis of the vector space of characteristic functions of permutohedral cones, studied as \textit{plates} by A. Ocneanu as communicated privately \cite{OcneanuCommunication}, and by the author in \cite{EarlyAnnouncement,EarlyAmplitudes2017}.  We derive a certain \textit{canonical} basis of the space which is spanned linearly by characteristic functions of permutohedral cones; these cones are \textit{dual} to the faces of the arrangement of reflection hyperplanes, defined by equations $x_i-x_j=0$.  
	
	The basis is called \textit{canonical} because of its compatibility with quotient maps to three other spaces: subsets of the canonical plate basis descend to bases for the quotients.  These maps are constructed from one or both of two geometrically-motivated conditions, according to which characteristic functions of (1) higher codimension faces, or (2) non-pointed cones, are sent to the zero element.

	Plates are \textit{permutohedral} cones: the edges extend along the root directions $e_i-e_j$.  Plates are labeled by ordered set partitions; when the blocks of an ordered partition are all singlets, then the corresponding plate is encoded by a directed tree of the form $\{(i_1,i_2),(i_2,i_3),\ldots, (i_{n-1},i_n)\}$ and correspondingly have edge directions the roots $e_{i_j}-e_{i_{j+1}}$.
	
	We begin in Section \ref{sec:Landscape} by introducing notation and collecting basic results about polyhedral cones.  In Section \ref{sec:platehomology} we give linear relations which express the characteristic function of any product of mutually orthogonal plates as a signed sum of plates which have the ambient dimension $n-1$.  We come to our main result, the construction of canonical plate basis, in Section \ref{sec: uppertriangularmap} and give two graded dimension formulas.  In Section \ref{sec:PlatesTrees} we give formulas which express the characteristic function of a permutohedral cone encoded by a directed tree as a signed sum of characteristic functions of plates.  In Theorem \ref{thm:Straightening} of Section \ref{sec:straightening}, we present a straightening formula which expands the characteristic function of any plate in the canonical basis.
	
	It would be interesting to study Hopf algebraic properties of the canonical plate basis.  For a possible starting point, one could look toward the product formula for WQSym, as represented on characteristic functions of polyhedral cones in Theorem 5.2 in \cite{ThibonMould}, keeping in mind the formula in Theorem \ref{thm: shuffleLump Identity} in the present paper.
	
	One consequence of Theorem \ref{thm:treeExpansionCompletePlates} and Corollary \ref{cor:plate relations quotient} is the possibility to derive the identities satisfied by the so-called Cayley functions from \cite{HeSon}; indeed, interpreting Cayley functions as Laplace transforms of permutohedral cones (in the terminology which follows, functional representations of type $\mathcal{P}_1^n$) solves the problem by allowing the application of Proposition \ref{prop:functional Tree Identities}.  By extension, this in part motivates the question studied in \cite{EarlyAmplitudes2017}, to explain in the same way the more intricate combinatorial constructions from \cite{Nonplanar, CachazoCombinatorialFactorization} using permutohedral cones.  Also relevant to the construction linking permutohedra and functional representations, are \cite{LoopParkeTaylor2,NewBCJ}.
	
	While the permutohedral cones studied in the present paper are all linear, having as faces hyperplanes which have the common intersection point $(0,\ldots, 0)$, one could introduce deformations, allowing translations by integer multiples of roots $e_i-e_j$.  This has some interesting consequences.  In the setting of \cite{EarlyAnnouncement}, such translations were essential in the proof of an $\symm_n$-equivariant analog of the classical Worpitzky identity from combinatorics: on one side of the identity are plates embedded in dilated simplices, and on the other side are plates intersected with translations of unit hypersimplices along integer multiples of roots $e_i-e_j$.  For a rather different multi-parameter deformation, for inspiration one could look toward the construction of the canonical form of the permutohedron in \cite{worldsheet}.

	\section{Landscape: basic properties of cones and plates}\label{sec:Landscape}
	Throughout this paper we shall assume $n\ge 1$.
	
	The all-subset hyperplane arrangement lives inside the linear hyperplane $V_0^n\subset\mathbb{R}^n$ defined by $x_1+\cdots+x_n=0$, and consists of the \textit{special} hyperplanes $\sum_{i\in I}x_i=0$, as $I$ runs through the proper nonempty subsets of $\{1,\ldots, n\}$.  This paper deals with the subspace of functions supported on $V_0^n$ lying in the span of characteristic functions $\lbrack U \rbrack$ of subsets $U$ which are closed cones in the all-subset arrangement.  Certain subsets $U$ play a special role in this theory.
	\begin{defn}\label{defn:plates}
		An \textit{ordered set partition} of $\{1,\ldots, n\}$ is a sequence of \textit{blocks} $(S_1,\ldots, S_k)$, where $\emptyset\not=S_i\subseteq\{1,\ldots,n\}$ with $S_i\cap S_j=\emptyset$ for $i\not=j$, and with $\cup_{i=1}^k S_i=\{1,\ldots, n\}$.  For each such ordered set partition, the associated \textit{plate} $\pi=\lbrack S_1,\ldots, S_k\rbrack$ is the cone defined by the system of inequalities
		\begin{eqnarray*}
			x_{S_1} & \ge & 0\\
			x_{S_1\cup S_2} & \ge & 0 \\
			& \vdots & \\  
			x_{S_1\cup\cdots\cup S_{k-1}} & \ge & 0\\
			x_{S_1\cup\cdots\cup S_{k}}=\sum_{i=1}^n x_i & =& 0,
		\end{eqnarray*}
		where $x_{S}=\sum_{i\in S}x_i$ for each $S\subseteq \{1,\ldots, n\}$.  When the $S_i$'s are singlets we write simply $\pi=\lbrack i_1,\ldots, i_n\rbrack$, where $i_a$ stands for the unique element of $S_a$.  We denote by $\lbrack\pi\rbrack=\lbrack\lbrack S_1,\ldots, S_k\rbrack\rbrack$ the characteristic function of the plate $\pi$.  
		
		Denote by $\text{len}(\pi)$ the number of blocks in the ordered set partition which labels $\pi$.
	\end{defn}

Note that the polyhedral cones in \cite{ThibonMould} are obtained from the above definition for permutohedral cones by relaxing the condition in the last line $\sum_{i=1}^n x_i=0$ to $\sum_{i=1}^n x_i\ge 0$.

\vspace{.5in}

	This paper studies four spaces $\hat{\mathcal{P}}^n, \mathcal{P}^n,\hat{\mathcal{P}}_1^n,\mathcal{P}_1^n$, as related by a diagram of linear surjections:
	\begin{eqnarray}\label{eqn: commutative diagram}
	\label{diagram}
	\begin{CD}
	\hat{\mathcal{P}}^n   @>{}>> \hat{\mathcal{P}}_1^n \\
	@V{}VV        @V{}VV\\
	\mathcal{P}^n   @>{}>> \mathcal{P}_1^n \\
	\end{CD}
	\end{eqnarray}
	The horizontal (respectively vertical) maps mod out by characteristic functions of cones which are not pointed (respectively not full-dimensional).  In particular, the upper left space $\hat{\mathcal{P}}^n$ is the linear span of characteristic functions of all plates.  The upper right space $\hat{\mathcal{P}}_1^n$ is the quotient of $\hat{\mathcal{P}}^n$ by the span of the characteristic functions of plates which are not pointed: they contain doubly infinite lines.  The lower left space $\mathcal{P}^n$ requires somewhat more care to define: it is the quotient of $\hat{\mathcal{P}}^n$ by the span of those linear combinations of characteristic functions of plates which vanish outside a set of measure zero in $V_0^n$.  The lower right space $\mathcal{P}_1^n$ is the quotient of $\hat{\mathcal{P}}^n$ by the span of both characteristic functions of non-pointed plates and linear combinations which vanish outside sets of measure zero in $V_0^n$.

	The subscript $1$ on $\hat{\mathcal{P}}_1^n$ and $\mathcal{P}_1^n$ is intended to remind that only characteristic functions of plates labeled by ordered set partitions having all blocks of size $1$ are nonzero.
	
	In Theorem \ref{thm:unitriangularMap} we construct the \textit{canonical} plate basis for $\hat{\mathcal{P}}^n$.  It is unitriangularly related to the given basis of characteristic functions $\lbrack\pi\rbrack$, with the virtue that it contains subsets which will descend to bases for the other three spaces.  In particular, this will show that the quotient spaces have dimensions given as follows:
	
	\begin{center}
		\begin{tabular}{c|c}
			$\dim\left(\hat{\mathcal{P}}^n\right)=\sum_{k=1}^n k!S(n,k)$ \ \ \ (Ordered Bell \#'s) & $\dim\left(\hat{\mathcal{P}}_1^n\right)\ =\ \sum_{k=1}^n s(n,k)=n!$ \\ 
			\hline 
			$\dim\left(\mathcal{P}^n\right)=\sum_{k=1}^n (k-1)!S(n,k)$ \ (Cyclic Bell \#'s) & $\ \dim\left(\mathcal{P}_1^n\right)\ =\ s(n,1)=(n-1)!$ \\ 
		\end{tabular} .
	\end{center}
	Here $S(n,k)$ is the $k^\text{th}$ Stirling number of the second kind, which counts the set partitions of $\{1,\ldots, n\}$ into $k$ parts, and $s(n,k)$ is the (unsigned) Stirling number of the first kind which counts the number of permutations which have $k$ cycles (including singlets) in their decompositions into disjoint cycles.
	
	The canonical basis of $\hat{\mathcal{P}}^n$ is naturally graded, and the corresponding dimensions are given below.  The first six rows are given below; note that the rows sum to the ordered Bell numbers $( 1, 3, 13, 75, 541, 4683)$.
	$$
	\begin{array}{cccccc}
	1&&&&&\\
	2 & 1 &  &  &  &\\
	6 & 6 & 1 &  &  &\\
	26 & 36 & 12 & 1 && \\
	150 & 250 & 120 & 20 & 1 & \\
	1082 & 2040 & 1230 & 300 & 30 & 1 \\
	\end{array}
	$$
	In Corollary \ref{cor:Lineardimensions} we observe that the rows are given by the equation 
	$$T_{n,k}=\sum_{i=k}^n S(n,i)s(i,k),$$
	where $S(n,i)$ is the Stirling number of the second kind, and $s(i,k)$ is the Stirling number of the first.  This is given in O.E.I.S. A079641.

	Further, as the canonical basis of $\hat{\mathcal{P}}^n$ passes to one for $\hat{\mathcal{P}}_1^n$, with graded dimensions the (unsigned) Stirling numbers of the first kind.
	$$
	\begin{array}{cccccc}
	1&&&&&\\
	1 & 1 &  &  &  &\\
	2 & 3 & 1 &  &  &\\
	6 & 11 & 6 & 1 && \\
	24 & 50 & 35 & 10 & 1 & \\
	120 & 274 & 225 & 85 & 15 & 1 \\
	\end{array}
	$$
	
	Finally, the space $\mathcal{P}_1^n$ has only one graded component, of dimension $(n-1)!$.  However, it turns out there is an algebra related to $\mathcal{P}_1^n$ which is the associated graded version of $\hat{\mathcal{P}}_1^n$, for which \cite{OrlikTerao} is relevant.  The study is beyond the scope of this paper and we leave it to future work.

	\subsection{Notation and conventions}

	Recall that a chain of inequalities $x_{i_1}\ge x_{i_2}\ge\cdots\ge x_{i_n}$ with $\sum x_i=0$ cuts out a simplicial cone called a \textit{Weyl chamber} in the arrangement of reflection hyperplanes, defined by equations $x_i-x_j=0$, of type $A_{n-1}$.  Weyl chambers are thus labeled by permutations $(i_1,\ldots, i_n)$ in one-line notation.
	
	A \textit{polyhedral cone} is an intersection of finitely many half spaces $\sum_{j=1}^n a_{i,j} x_j\ge 0$ in $\mathbb{R}^n$, for some integer constants $a_{i,j}$.  For example, the chain of inequalities which define a Weyl chamber can be reorganized as 
	$$x_{i_1}-x_{i_2}\ge 0,\ldots, x_{i_{n-1}}-x_{i_n}\ge 0,\ \text{ with } \sum x_i=0,$$
	so it is a polyhedral cone.  Letting $e_1,\ldots, e_n$ denote the standard basis for $\mathbb{R}^n$, if $I$ is a subset of $\{1,\ldots, n\}$, set $e_I=\sum_{i\in I} e_i$ and let $\bar{e}_I=\frac{\vert I^c\vert}{n}e_I-\frac{\vert I\vert}{n}e_{I^c}$ be the projection of $e_{I}$ onto $V_0^n$, along the vector $(1,\ldots, 1)$.  A polyhedral cone is \textit{pointed} if it does not contain any lines which extend to infinity in both directions.

	Given $\{v_1,\ldots, v_k\}\subset\mathbb{R}^n$, denote by $\langle v_1,\ldots, v_k\rangle_+:=\left\{c_1v_1+\cdots+c_kv_k: c_i\ge 0\right\}$ their \textit{conical hull}.  For example, a Weyl chamber is a conical hull, since it can also be obtained as the set of all linear combinations of a set of vectors with nonnegative coefficients,
	$$\left\langle\bar{e}_{i_1},\bar{e}_{\{i_1,i_2\}},\ldots, \bar{e}_{\{i_1,i_2,\ldots, i_{n-1}\}}\right\rangle_+=\{c_1\bar{e}_{i_1}+c_2\bar{e}_{\{i_1,i_2\}}+\cdots+c_{n-1}\bar{e}_{\{i_1,\ldots, i_{n-1}\}}:c_i\ge 0\}.$$
	Further, the conical hull of the \textit{roots} of type $A_{n-1}$,
	$\langle e_{i_1}-e_{i_2},\ldots, e_{i_{n-1}}-e_{i_n}\rangle_+$ is called a \textit{root cone}, and it is easy to check that it coincides with the plate $\lbrack i_1,\ldots, i_n\rbrack$.  We leave it as an exercise for the reader to check that, if $(S_1,\ldots, S_k)$ is an ordered set partition of $\{1,\ldots, n\}$, then we have
	\begin{eqnarray} 
	&& \lbrack S_1,\ldots, S_k\rbrack \nonumber\\
	&& =\left\langle e_a-e_b:\text{ either } a,b\in S_j \text{ for $j = 1,\ldots,k$, } \text{ or } (a,b)\in S_i\times S_{i+1} \text{ for $i=1,\ldots,k-1$} \right\rangle_+.\label{eqn: lumped plate conical hull}
	\end{eqnarray}
	
	\begin{defn}\label{defn:Minkowski sum}
		The \textit{Minkowski sum} of two polyhedral cones $C_1,C_2$ is given by $C_1+C_2=\left\{u+v:u\in C_1,\ v\in C_2\right\}$.  Denote by $\lbrack C_1\rbrack\cdot \lbrack C_2\rbrack=\lbrack C_1\cap C_2\rbrack$ the pointwise product of their characteristic functions, and denote by
		$$\lbrack C_1\rbrack\bullet \lbrack C_2\rbrack=\lbrack C_1+C_2\rbrack=\lbrack \left\{u+v:u\in C_1,\ v\in C_2\right\}\rbrack$$ 
		their \textit{convolution}, which is the characteristic function of the Minkowski sum $C_1+C_2$.
	\end{defn}
	
	\begin{rem}
		See \cite{BarvinokPammersheim} for details on the constructions of $\cdot$ and $\bullet$ as bi-linear maps on the $\mathbb{Q}$-vector space of characteristic functions of cones.  Note however that we extend the coefficient field to $\mathbb{C}$.
	\end{rem}

	There is a notion of duality for polyhedral cones.

	\begin{defn}\label{defn: duality for cones}
		Let $C$ be a polyhedral cone in $V_0^n$.  The dual cone to $C$, denoted $C^\star$, is defined by the equation 
		$$C^\star=\{y\in V_0^n:y\cdot x\ge 0\text{ for all } x\in C\}.$$
	\end{defn}

	\begin{rem}\label{rem: properties of dual cones}
		Dual cones are known to satisfy the following properties.
		\begin{enumerate}
			\item The dual $C^\star$ of a cone $C$ with nonempty interior is pointed.
			\item The dual $C^\star$ of a pointed cone $C$ has nonempty interior.
			\item If a cone $C$ is convex and topologically closed, then $(C^\star)^\star=C$.
		\end{enumerate}
		See for example \cite{ConvexOptimization} for details.
	\end{rem}

	\begin{rem}\label{rem: properties dual cones}
		We shall need the following results from \cite{BarvinokPammersheim}, Theorem 2.7 and respectively Corollary 2.8, where we extend the field from $\mathbb{Q}$ to $\mathbb{C}$.
		\begin{itemize}
			\item Duality for cones respects linear relations among their characteristic functions: if $C_1,\ldots, C_k$ are cones and for some constants $c_1,\ldots, c_k\in\mathbb{C}$ we have 
			$$\sum_{i=1}^kc_i\lbrack C_i\rbrack=0$$
			then the same relation holds among the characteristic functions for the dual cones,
			$$\sum_{i=1}^kc_i\lbrack C_i^\star\rbrack=0,$$
			interchanging the pointwise product and the convolution.
			\item There exists a linear map on the vector space spanned by characteristic functions of cones, which we also denote by $\star$, such that $\lbrack C\rbrack^\star=\lbrack C^\star\rbrack$.
			\item Duality for cones interchanges intersections and Minkowski sums: if $C_1,C_2$ are cones, then $\lbrack C_1\cap C_2\rbrack=\lbrack C_1\rbrack\cdot\lbrack C_2\rbrack$, and moreover 
			$$\left(\lbrack C_1\rbrack\cdot\lbrack C_2\rbrack\right)^\star=\lbrack C_1^\star\rbrack\bullet \lbrack C_2^\star\rbrack.$$
		\end{itemize}
	\end{rem}
	
	\begin{example}\label{example: Dual Plates}
		Denote $\bar{e}_1=(2,-1,-1)/3$ and $\bar{e}_2=(-1,2,-1)/3$, hence $\bar{e}_1+\bar{e}_2=(1,1,-2)/3=-\bar{e}_3$.  Then the two cones, the simple root cone
		$$\langle e_1-e_2,e_2-e_3\rangle_+$$
		and the Weyl chamber
		$$\langle\bar{e}_1,\bar{e}_1+\bar{e}_2\rangle_+,$$
		are dual to each other.  See Figure \ref{fig:dual-plates}.
	\end{example}
	\begin{figure}[h!]
		\centering
		\includegraphics[width=0.65\linewidth]{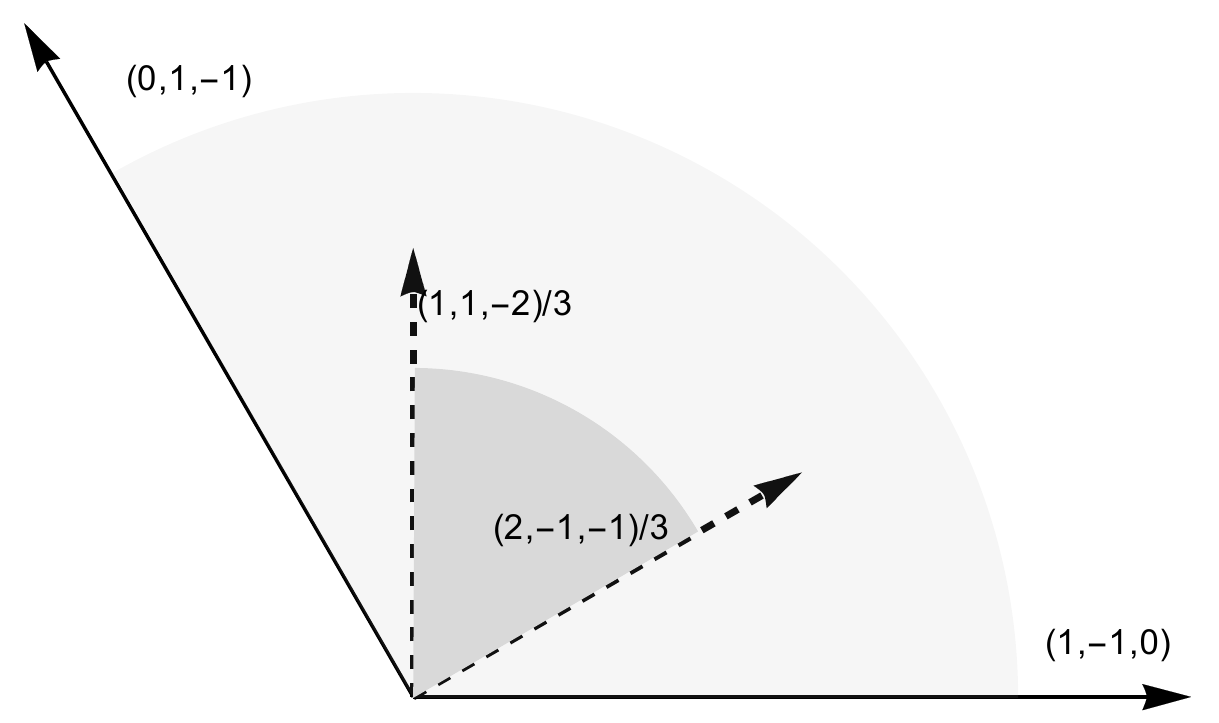}
		\caption{Dual Cones for Example \ref{example: Dual Plates}}
		\label{fig:dual-plates}
	\end{figure}

	\begin{defn}
		Denote by $\lbrack S_1,\ldots, S_k\rbrack^\star$ the face of the reflection arrangement labeled by the ordered set partition $(S_1,\ldots, S_k)$, given by the equations
		$$(x_{s_{1,1}}=\cdots=x_{s_{1,l_1}})\ge (x_{s_{2,1}}=\cdots=x_{s_{2,l_2}})\ge\cdots\ge(x_{s_{k,1}}=\cdots =x_{s_{k,l_k}}),\ \ x_1+\cdots+x_n=0$$
		(where $s_{i,1},\ldots, s_{i,l_i}$ are the elements of $S_i$), or for short
		$$x_{(S_1)}\ge x_{(S_2)}\ge \cdots\ge x_{(S_k)},\ x_1+\cdots+x_n=0,$$
		where for compactness the symbol $x_{(S)}$ is defined to be the list of equations $(x_{i_1}=\cdots=x_{i_{\vert S\vert}})$, for a subset $S=\{i_1,\ldots, i_{\vert S\vert}\}$ of $\{1,\ldots, n\}$.
		
		This can be given equivalently as the conical hull
		$$\left\langle \bar{e}_{S_1},\bar{e}_{S_1}+\bar{e}_{S_2},\ldots, \bar{e}_{S_1}+\cdots+\bar{e}_{S_{k-1}}\right\rangle_+.$$
	\end{defn}
	
	\begin{prop}\label{prop: dual higher codimension plate}
		If $(S_1,\ldots, S_k)$ is an ordered set partition of a subset $S\subseteq\{1,\ldots, n\}$, then the dual cone $\lbrack S_1,\ldots, S_k\rbrack^\star$ to
		$$\lbrack S_1,\ldots, S_k\rbrack=\left\{\sum_{i\in S}x_i e_i\in V_0^n: \sum_{i\in S_1\cup\cdots\cup S_j}x_i\ge 0,\ \text{for each } j=1,\ldots, k-1\right\}$$
		equals
		$$\left\{t_1\sum_{i\in S_1} e_i+\cdots+t_k\sum_{i\in S_k}e_i\in V_0^n:t_1\ge \cdots\ge t_k\right\}.$$
	\end{prop}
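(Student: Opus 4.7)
The natural approach is to dualize directly, using the conical hull description of the plate supplied by equation~\eqref{eqn: lumped plate conical hull}. A vector $y\in V_0^n$ lies in $\lbrack S_1,\ldots,S_k\rbrack^\star$ if and only if $y\cdot v\ge 0$ for every root $e_a-e_b$ listed as a generator in~\eqref{eqn: lumped plate conical hull}, so the problem reduces to translating those generators into linear inequalities on the coordinates of $y$.

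I would first handle the within-block generators. The clause ``$a,b\in S_j$'' in~\eqref{eqn: lumped plate conical hull} is symmetric in $a$ and $b$, so both $e_a-e_b$ and $e_b-e_a$ appear among the generators. The corresponding two inequalities force $y_a=y_b$, so $y$ must be constant on each block $S_j$; denote that constant by $t_j$, giving $y=\sum_{j=1}^k t_j\sum_{i\in S_j}e_i$. The between-block generators $(a,b)\in S_i\times S_{i+1}$ then read $y_a-y_b=t_i-t_{i+1}\ge 0$, producing the chain $t_1\ge t_2\ge\cdots\ge t_k$. Together with $y\in V_0^n$, this establishes the inclusion $\subseteq$. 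Conversely, any $y$ of the claimed form pairs to $0$ with each within-block root and to a nonnegative quantity with each between-block root, so it lies in the dual.

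The only nontrivial point is to pin down the symmetric clause in~\eqref{eqn: lumped plate conical hull}, which is what forces $y$ to be constant on each block; once this is verified, everything else is a mechanical rewriting. A more formal alternative would be to apply the Minkowski-sum-of-duals identity from Remark~\ref{rem: properties dual cones} to the half-space presentation of the plate, obtaining first the conical hull $\langle \bar{e}_{S_1},\,\bar{e}_{S_1}+\bar{e}_{S_2},\ldots,\bar{e}_{S_1}+\cdots+\bar{e}_{S_{k-1}}\rangle_+$ of the preceding definition, and then recognizing this via the substitution $s_j=t_j-t_{j+1}$ as the staircase region described in the proposition.
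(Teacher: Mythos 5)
Your plan is in substance exactly the paper's argument: invoke the conical-hull presentation~\eqref{eqn: lumped plate conical hull}, observe that the within-block generators come in opposite pairs $e_a-e_b$ and $e_b-e_a$ forcing $y_a=y_b$ on each block, and then read off $t_i\ge t_{i+1}$ from the between-block generators, with the converse being a straight verification.

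One point worth flagging, precisely because the proposition is named ``higher codimension'': when $S=S_1\cup\cdots\cup S_k$ is a \emph{proper} subset of $\{1,\ldots,n\}$, the roots $e_a-e_b$ in~\eqref{eqn: lumped plate conical hull} all have support inside $S$, so they impose no constraint whatsoever on the coordinates $y_i$ with $i\in S^c$. The dual cone is therefore
$$\Bigl\{t_1\textstyle\sum_{i\in S_1}e_i+\cdots+t_k\sum_{i\in S_k}e_i+\sum_{i\in S^c}y_i\,e_i\in V_0^n:\ t_1\ge\cdots\ge t_k,\ y_i\in\mathbb{R}\Bigr\},$$
not merely the set with $y_i=0$ off $S$ that you wrote. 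The proposition's display drops the $S^c$ term (an abridgment that the subsequent Corollary~\ref{cor: reduction to Minkowski sum} implicitly undoes), but the paper's own proof body explicitly retains $\sum_{i\in S^c}y_ie_i$. Your step ``$y$ constant on each $S_j$, hence $y=\sum_j t_j\sum_{i\in S_j}e_i$'' has a genuine non sequitur in the proper-subset case; you should either carry the $S^c$ coordinates through or state up front that you are treating the case $S=\{1,\ldots,n\}$ and note that the general case only adds free complementary coordinates. With that amendment your argument is complete and is the paper's argument.
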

	
	\begin{proof}
		Suppose $y\cdot x\ge 0$ for all $x\in \lbrack S_1,\ldots, S_k\rbrack$.  By Equation \eqref{eqn: lumped plate conical hull},
		$$\lbrack S_1,\ldots, S_k\rbrack=\left\langle e_a-e_b:\text{ either } a,b\in S_i \text{ or $(a,b)\in S_i\times S_{i+1}$ for $i=1,\ldots,k-1$ } \right\rangle_+,$$
		it suffices to check that, for either $a,b\in S_i$ or $(a,b)\in S_i\times S_{i+1}$, we have
		$$y_a-y_b=y\cdot (e_a-e_b)\ge0\text{ and } y_b-y_a= y\cdot (e_b-e_a)\ge 0.$$
		Thus, 
		$$y=t_1\sum_{i\in S_1}e_i+\cdots+t_k\sum_{i\in S_k}e_i+\sum_{i\in S^c} y_i e_i$$
		for some $t_1,\ldots, t_k$ and $y_i$, such that $\vert S_1\vert t_1+\cdots+\vert S_k\vert t_k+\sum_{i\in S^c} y_i=0$.  Now, for each $i=1,\ldots, k-1$, for any $(a,b)\in S_i\times S_{i+1}$ we have $t_i-t_{i+1}=y\cdot (e_a-e_b)\ge 0$, or $t_i\ge t_{i+1}$.
	\end{proof}
	
	Corollary \ref{cor: reduction to Minkowski sum} follows from \eqref{eqn: lumped plate conical hull}.  For pedagogical reasons we include a different proof using duality.
	\begin{cor}\label{cor: reduction to Minkowski sum}
		If $\pi=\lbrack S_1,\ldots, S_k\rbrack$ is a plate, then we may factor its characteristic function using the pointwise product.  Set theoretically we have
		$$\pi=\lbrack S_1,\ldots, S_k\rbrack=\bigcap_{i=1}^{k-1}\lbrack S_1\cup\cdots\cup S_i,S_{i+1}\cup\cdots\cup S_k\rbrack,$$
		or in terms of the pointwise product for characteristic functions,
		$$\lbrack\pi\rbrack=\lbrack\lbrack S_1,S_2\cup S_3\cdots \cup S_k\rbrack\rbrack\cdot\lbrack\lbrack S_1\cup S_2, S_3\cup \cdots \cup S_k\rbrack\rbrack\cdots \lbrack\lbrack S_1\cup S_2 \cdots \cup S_{k-1},S_k\rbrack\rbrack.$$
		Using Minkowski sums it can be expressed as
		$$\pi=\sum_{i=1}^{k-1}\lbrack S_i,S_{i+1}\rbrack,$$
		and correspondingly terms of the convolution product for characteristic functions,
		$$\lbrack\pi\rbrack=\lbrack\lbrack S_1,S_2\rbrack\rbrack\bullet\lbrack\lbrack S_2,S_3\rbrack\rbrack\bullet\cdots \bullet \lbrack\lbrack S_{k-1},S_k\rbrack\rbrack,$$
		where 
		$$\lbrack S_i,S_{i+1}\rbrack=\left\{\sum_{j\in S_i\cup S_{i+1}}x_j e_j\in V_0^n:\sum_{j\in S_i}x_j\ge 0,\  \sum_{j\in S_i\cup S_{i+1}}x_{j}=0\right\}.$$
	\end{cor}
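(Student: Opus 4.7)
The plan is to prove the four identities in two groups: first, the intersection/pointwise-product pair, and then the Minkowski-sum/convolution pair. Within each group, the characteristic-function statement follows automatically from its set-theoretic partner via the definitions of $\cdot$ and $\bullet$, so it suffices to establish the set-theoretic identities.

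For the intersection $\pi=\bigcap_{i=1}^{k-1}\lbrack S_1\cup\cdots\cup S_i,S_{i+1}\cup\cdots\cup S_k\rbrack$, one need only unfold Definition \ref{defn:plates}: the two-block plate $\lbrack S_1\cup\cdots\cup S_i, S_{i+1}\cup\cdots\cup S_k\rbrack$ is by definition the subset of $V_0^n$ cut out by the single inequality $x_{S_1\cup\cdots\cup S_i}\ge 0$, and intersecting over $i=1,\ldots,k-1$ reassembles exactly the system of inequalities defining $\pi$; the pointwise-product identity $\lbrack C_1\cap C_2\rbrack=\lbrack C_1\rbrack\cdot\lbrack C_2\rbrack$ then finishes the first pair.

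For the Minkowski-sum formula, as the author advertises, I would argue via duality. The first step is to verify the intersection identity for the duals,
$$\pi^\star=\bigcap_{i=1}^{k-1}\lbrack S_i,S_{i+1}\rbrack^\star.$$
Following the proof of Proposition \ref{prop: dual higher codimension plate} and using the generating set for $\lbrack S_i,S_{i+1}\rbrack$ from Equation \eqref{eqn: lumped plate conical hull}, pairing $y\in V_0^n$ against the root generators $e_a-e_b$ with $a,b\in S_i$ (or $a,b\in S_{i+1}$) forces $y$ to be constant on each of $S_i,S_{i+1}$ with values $t_i,t_{i+1}$, the cross-block generators impose $t_i\ge t_{i+1}$, and the coordinates of $y$ on $(S_i\cup S_{i+1})^c$ remain free apart from the global constraint $y\in V_0^n$. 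Intersecting these $k-1$ dual cones then pins $y$ down to be constant $t_j$ on every block $S_j$ with $t_1\ge\cdots\ge t_k$, which is exactly $\pi^\star$ by Proposition \ref{prop: dual higher codimension plate} (applied with $S=\{1,\ldots,n\}$). Finally, applying the $\star$ operation to this dual identity and invoking both the involutivity $(C^\star)^\star=C$ from Remark \ref{rem: properties of dual cones}(3) and the interchange of pointwise product with convolution from Remark \ref{rem: properties dual cones} produces $\lbrack\pi\rbrack=\lbrack\lbrack S_1,S_2\rbrack\rbrack\bullet\cdots\bullet\lbrack\lbrack S_{k-1},S_k\rbrack\rbrack$ and the corresponding Minkowski-sum identity.

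The main obstacle is the dual computation of the partial plates $\lbrack S_i,S_{i+1}\rbrack$, because these cones are \emph{not} full-dimensional in $V_0^n$, so each dual $\lbrack S_i,S_{i+1}\rbrack^\star$ carries a large linear subspace of free directions on $(S_i\cup S_{i+1})^c$; the essential point is that these free directions are locked down by the other factors of the intersection, since every index $j\in\{1,\ldots,n\}$ lies in some $S_\ell$, and the global chain $t_1\ge\cdots\ge t_k$ emerges by splicing the individual pairwise inequalities together.
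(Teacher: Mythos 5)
Your proposal is correct and follows essentially the same route as the paper: the intersection formula is read off directly from Definition \ref{defn:plates}, and the convolution formula is obtained by computing each dual $\lbrack S_i, S_{i+1}\rbrack^\star$ via Proposition \ref{prop: dual higher codimension plate} (and Equation \eqref{eqn: lumped plate conical hull}), showing the intersection of these duals is $\pi^\star$, and then dualizing back using Remark \ref{rem: properties dual cones}. Your closing caveat about the partial plates $\lbrack S_i,S_{i+1}\rbrack$ not being full-dimensional, and the free directions on $(S_i\cup S_{i+1})^c$ being pinned down by the remaining factors, is a useful point that the paper leaves implicit but does not change the argument.
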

	
	\begin{proof}
		The identity for the intersection products follows immediately from the defining inequalities for plates.
		
		For the convolution product identity it is convenient first to dualize,
		\begin{eqnarray*}
			\lbrack\lbrack S_1,S_2\rbrack\rbrack\bullet\lbrack\lbrack S_2,S_3\rbrack\rbrack\bullet\cdots \bullet \lbrack\lbrack S_{k-1},S_k\rbrack\rbrack & = & \left(\lbrack\lbrack S_1,S_2\rbrack\rbrack^\star\cdot\lbrack\lbrack S_2,S_3\rbrack\rbrack^\star\cdots  \lbrack\lbrack S_{k-1},S_k\rbrack\rbrack^\star\right)^\star,
		\end{eqnarray*}
		where by Proposition \ref{prop: dual higher codimension plate} we have
		$$\lbrack\lbrack S_{i-1},S_i\rbrack\rbrack^\star =\left\lbrack\{y\in V_0^n:y_{(S_{i})}\ge y_{(S_{i+1})} \}\right\rbrack.$$
		
		Thus,
		\begin{eqnarray*}
			& & \left(\lbrack\lbrack S_1,S_2\rbrack\rbrack^\star\cdot\lbrack\lbrack S_2,S_3\rbrack\rbrack^\star\cdots  \lbrack\lbrack S_{k-1},S_k\rbrack\rbrack^\star\right)^\star\\
			& = & \left(\left\lbrack\{x\in V_0^n:x_{(S_{1})}\ge x_{(S_{2})} \}\right\rbrack\cdots \left\lbrack\{x\in V_0^n:x_{(S_{k-1})}\ge x_{(S_{k})}\}\right\rbrack\right)^\star\\
			& =& \left\lbrack\{x\in V_0^n:x_{(S_{1})}\ge\cdots\ge  x_{(S_k)}\}\right\rbrack^\star\\
			& = & \left\lbrack\left\{x\in V_0^n:x_{S_1}\ge 0,\ldots, \ x_{S_1\cup \cdots\cup S_{k-1}}\ge0\right\}\right\rbrack\\
			& = & \lbrack\lbrack S_1,\ldots, S_k\rbrack\rbrack,
		\end{eqnarray*}
		where as usual we use the shorthand notations $x_S=\sum_{i\in S}x_i$ and $x_{(S)}=(x_{s_1}=\cdots=x_{s_{\vert S\vert}})$, for $S$ a subset of $\{1,\ldots, n\}$.

	\end{proof}

	\subsection{Genericity}
	The relationships between $\hat{\mathcal{P}}^n,\mathcal{P}^n,\hat{\mathcal{P}}_1^n$ and $\mathcal{P}_1^n$ reduce to variations on a single linear identity.  If $S_1,S_2\subsetneq\{1,\ldots, n\}$ are any two disjoint nonempty subsets, then the characteristic function of the set 
	$$\{x\in V_0^n:x_{S_1}=x_{S_2}=0\text{ and } x_j=0\text{ for all }j\not\in S_1\cup S_2\},$$ represented by the pointwise product $\lbrack\lbrack S_1,S_2\rbrack\rbrack\cdot \lbrack\lbrack S_2,S_1\rbrack\rbrack$, can be expressed via
	$$\lbrack\lbrack S_1,S_2\rbrack\rbrack+\lbrack\lbrack S_2,S_1\rbrack\rbrack=\lbrack\lbrack S_1,S_2\rbrack\rbrack\cdot \lbrack\lbrack S_2,S_1\rbrack\rbrack+\lbrack\lbrack S_1\cup S_2\rbrack\rbrack.$$
	The above identity holds in $\hat{\mathcal{P}}^n$.  To obtain the identity which holds in $\mathcal{P}^n$ we specialize to 
	$$\lbrack\lbrack S_1, S_2\rbrack\rbrack+\lbrack\lbrack S_2,S_1\rbrack\rbrack=\lbrack\lbrack S_1\cup S_2\rbrack\rbrack,$$
	where $\lbrack\lbrack S_1\cup S_2\rbrack\rbrack$ is the characteristic function of the set 
	$$\lbrack S_1\cup S_2\rbrack = \left\{\sum_{j\in S_1\cup S_2} x_j e_j\in \mathbb{R}^n:\sum_{j\in S_1\cup S_2} x_j=0\right\}\subseteq V_0^n.$$
	In $\hat{\mathcal{P}}_1^n$ and $\mathcal{P}_1^n$ we we assume that $S_1,S_2\subset\{1,\ldots, n\}$ are singlets and specialize to respectively
	$$\lbrack\lbrack S_1,S_2\rbrack\rbrack+\lbrack\lbrack S_2,S_1\rbrack\rbrack =\lbrack\lbrack S_1,S_2\rbrack\rbrack\cdot\lbrack\lbrack S_2,S_1\rbrack\rbrack$$
	and
	$$\lbrack\lbrack S,S^c\rbrack\rbrack+\lbrack\lbrack S^c,S\rbrack\rbrack =0.$$

	\section{Plate homology: from plates to their faces }\label{sec:platehomology}
	The main result of this section is Theorem \ref{thm: shuffleLump Identity}, which expands the face of a plate as a linear combination of characteristic functions of plates in $\hat{\mathcal{P}}^n$.
	
	While the symmetric group does not play an essential role in this paper, let us point out some of the symmetry properties of plates which were implicit in \cite{EarlyAnnouncement}.  The action of the symmetric group $\symm_n$ on plates is inherited from the coordinate permutation on $\mathbb{R}^n$.  In the plate notation, $\sigma\in\symm_n$ acts on characteristic functions of plates $\lbrack\lbrack S_1,\ldots, S_k\rbrack\rbrack$ by permuting elements in the blocks $S_i$.
	
	\begin{rem}
		The permutation group $\symm_n$ preserves the following operations on characteristic functions of plates.
		\begin{enumerate}
			\item Inclusion: $$\lbrack\lbrack S_1,\ldots, S_k\rbrack\rbrack\mapsto \lbrack\lbrack T_1,\ldots, T_l\rbrack\rbrack$$
			where $l\le k$, each $T_i$ is a union of some consecutive $S_j$'s and we still have $\cup_{i=1}^l T_i=\{1,\ldots, n\}$.  Note that from the defining inequalities we have the inclusion of cones
			$$\lbrack S_1,\ldots, S_k\rbrack\subseteq \lbrack T_1,\ldots, T_l\rbrack.$$
			\item Block permutation: $$\lbrack\lbrack S_1,\ldots, S_k\rbrack\rbrack\mapsto \lbrack\lbrack S_{\tau_1},\ldots, S_{\tau_k}\rbrack\rbrack$$
			for a permutation $\tau\in \mathfrak{S}_k$.
			\item Restriction to a face: 
			$$\lbrack\lbrack S_1,\ldots, S_k\rbrack\rbrack=\lbrack\lbrack S_1,S_2\rbrack\rbrack\bullet\cdots\bullet\lbrack\lbrack S_{k-1}, S_k\rbrack\rbrack \mapsto \lbrack\lbrack S_{i_1},S_{i_1+1}\rbrack\rbrack\bullet\cdots\bullet\lbrack\lbrack S_{i_{l}}, S_{i_{l}+1}\rbrack\rbrack$$
			for any subset $\{i_1,\ldots, i_l\}\subseteq\{1,\ldots, k-1\}$.
		\end{enumerate}
	\end{rem}
	

	\begin{prop}\label{Prop: UniversalPlateModuleFreelyGenerated}
		The space $\hat{\mathcal{P}}^n$, the linear span of the characteristic functions of plates $\pi$, has linear dimension the ordered Bell number $\sum_{k=1}^{n} k!S_{n,k}$, where $S_{n,k}$ are the Stirling numbers of the second kind, which count the number of set partitions of $\{1,\ldots, n\}$ into $k$ disjoint subsets.
	\end{prop}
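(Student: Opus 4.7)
The plan has two steps: count generators for the upper bound, then use the duality framework of Remark~\ref{rem: properties dual cones} to reduce linear independence of $\{\lbrack\lbrack\pi\rbrack\rbrack\}$ to a stratification argument for the closed faces of the type $A_{n-1}$ reflection arrangement.

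For the upper bound, I would observe that plates are parametrized by ordered set partitions $(S_1,\ldots, S_k)$ of $\{1,\ldots, n\}$: partitioning into $k$ unordered blocks contributes $S_{n,k}$ choices, and linearly ordering the blocks contributes $k!$. Summing over $k$ gives $\sum_{k=1}^n k! S_{n,k}$ generators, so $\dim \hat{\mathcal{P}}^n \le \sum_{k=1}^n k! S_{n,k}$.

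For the lower bound, I would transport a hypothetical relation $\sum_\pi c_\pi \lbrack\lbrack\pi\rbrack\rbrack = 0$ through the $\star$ involution of Remark~\ref{rem: properties dual cones}, obtaining $\sum_\pi c_\pi \lbrack\lbrack\pi^\star\rbrack\rbrack = 0$. By the description preceding Proposition~\ref{prop: dual higher codimension plate}, $\pi^\star = \lbrack S_1,\ldots, S_k\rbrack^\star$ is the closed face of the type $A_{n-1}$ reflection arrangement given by $x_{(S_1)} \ge \cdots \ge x_{(S_k)}$. Let $\pi^{\star,\circ}$ denote the relatively open face obtained by replacing all the weak inequalities with strict ones. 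As $\pi$ ranges over ordered set partitions, the relatively open faces $\pi^{\star,\circ}$ partition $V_0^n$, so the family $\{\lbrack\lbrack\pi^{\star,\circ}\rbrack\rbrack\}$ is linearly independent for the trivial reason that the supports are pairwise disjoint.

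A point lies in $\pi^\star$ precisely when some of the consecutive strict inequalities collapse to equalities, i.e., when some adjacent blocks of $\pi$ are merged into one. This yields the decomposition
$$\lbrack\lbrack\pi^\star\rbrack\rbrack = \sum_{\tau \,\preceq\, \pi} \lbrack\lbrack\tau^{\star,\circ}\rbrack\rbrack,$$
where $\preceq$ is the partial order of coarsening an ordered set partition by merging consecutive blocks. The change-of-basis matrix is unitriangular with respect to $\preceq$, hence invertible; so $\{\lbrack\lbrack\pi^\star\rbrack\rbrack\}$ is linearly independent, and transporting back via duality yields the same for $\{\lbrack\lbrack\pi\rbrack\rbrack\}$. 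The main obstacle is simply the bookkeeping for the duality step, confirming that the $\star$ map of Remark~\ref{rem: properties dual cones} is bijective on the span of interest (which follows from $(\pi^\star)^\star = \pi$ for closed convex cones); the combinatorial core of the argument is standard.
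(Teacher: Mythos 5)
Your proof is correct and follows essentially the same route as the paper's: dualize through $\star$ to pass from plates to the closed faces of the type $A_{n-1}$ reflection arrangement, then deduce linear independence from the fact that the relatively open faces partition $V_0^n$. The only difference is one of explicitness: you write out the unitriangular change of basis between closed faces and relatively open faces, whereas the paper's proof asserts the independence of the closed faces more tersely from the disjointness of relative interiors within each dimension stratum.
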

	
	\begin{proof}
		By Remark \ref{rem: properties dual cones}, the involution $\star$ preserves linear relations among characteristic functions; therefore it provides a natural isomorphism of vector spaces from $\hat{\mathcal{P}}^n$ onto the space spanned by the characteristic functions of faces of Weyl chambers.  Further, for each dimension $k=0,\ldots, n-1$, these faces have non-intersecting relative interiors and consequently their characteristic functions are linearly independent, and by the duality map $\star$, plates in $\hat{\mathcal{P}}^n$ are as well.  Therefore, to extract the dimension formula it suffices to count the faces of the arrangement of reflection hyperplanes; but these are in bijection with the ordered set partitions of $\{1,\ldots, n\}$, which are counted by the ordered Bell numbers.
	\end{proof}

	\begin{defn}\label{defn:lumped permutation, etc}
		Let $\mathbf{S}=(S_1,\ldots, S_k)$ be an ordered set partition of $\{1,\ldots, n\}$.  Suppose $\mathbf{T}=(T_1,\ldots, T_l)$ is another ordered set partition of $\{1,\ldots, n\}$, such that each block $T_i$ of $\mathbf{T}$ is a union of blocks $S_j$ of $\mathbf{S}$.  Define $p_i\in\{1,\ldots, l\}$ by the condition that $S_i$ is a subset of $T_{p_i}$.  Thus if $S_i$ is a subset of the first block of $\mathbf{T}$ then $p_1=1$.  Then $\mathbf{T}$ has the \textit{orientation} $p_i\le p_j$ with respect to $\mathbf{S}$ whenever $S_i$ appears to the left of $S_j$ in $\mathbf{T}$, with equality if and only if $S_i\sqcup S_j\subseteq T_{a}$ for some $a\in\{1,\ldots, l\}$.  In the case that $\mathbf{T}$ satisfies $p_i<p_j$, we shall say that $T$ is \textit{compatible} with the orientation $p_i<p_j$.
		
	\end{defn}
	
	\begin{defn}\label{defn:shuffle-Lumping}
		Let $\mathbf{S}=(S_1,\ldots, S_k)$ be an ordered set partition of $\{1,\ldots, n\}$		Another ordered set partition $\mathbf{T}=(T_1,\ldots, T_m)$ of $\{1,\ldots, n\}$ is a \textit{shuffle-lumping} of ordered set partitions 
		$$\mathbf{S}_1=(S_{1},S_{2},\ldots, S_{k_1}), \mathbf{S}_2=(S_{k_1+1},S_{k_1+2},\ldots, S_{k_1+k_2}),\ldots, \mathbf{S}_l=(S_{k_1+\cdots+k_{l-1}+1},\ldots, S_{k_1+\cdots+k_{l}}),$$
		provided that each ordered set partition $\mathbf{S}_i$ is compatible with the orientations
		$$p_{1}<p_2<\cdots<p_{k_1}$$
		$$p_{k_1+1}<p_{k_1+2}<\cdots<p_{k_1+k_2}$$
		$$\vdots$$
		$$p_{k_1+\cdots+k_{l-1}+1}<p_{k_1+\cdots+k_{l-1}+2}<\cdots<p_n.$$
	\end{defn}
	\begin{example}
		The plate $\lbrack S_1,\ldots, S_k\rbrack$ is uniquely characterized among its shuffle-lumpings by the set of orientations $p_1< p_2<\cdots < p_k$ on the blocks $S_1,\ldots, S_k$.
	\end{example}

	\begin{example}
		If $(S_1,S_2,S_3,S_4)=(1,4,23,5)$ and $(S_5,S_6)=(678,9)$, then shuffle-lumped plates include for example
		$$\lbrack1,4678,23,59\rbrack\text{ and } \lbrack678,1,4,23,9,5\rbrack.$$
		Then $\lbrack1,4678,23,59\rbrack$ is a plate with the smallest possible number of blocks, while $\lbrack678,1,4,23,9,5\rbrack$ is a plate with the largest possible number of blocks, in the shuffle-lumping of the set compositions $(S_1,S_2,S_3,S_4)$ and $(S_5,S_6)$.
	\end{example}

	\begin{example}
		The shuffle-lumpings of $(S_1,S_2)=(\{1\},\{2,3\})$ and $(S_3,S_4)=(\{4\},\{5\})$ are
		$$\{(1,23,4,5),(1,234,5),(1,4,23,5),(14,23,5),(4,1,23,5),(1,4,235),(14,235),(4,1,235),$$
		$$(1,4,5,23),(14,5,23),(4,1,5,23),(4,15,23),(4,5,1,23)\}.$$
	\end{example}
	
	In Lemma \ref{lem: Weyl chamber decomposition shuffle} we decompose the characteristic function of a union of closed Weyl chambers into an alternating sum of partially closed Weyl chambers in a canonical way that depends on descent positions, with respect to the natural order $(1,\ldots, n)$.  See \cite{Stanley EC1} for a systematic approach using so-called $(P,\omega)$-partitions.

	\begin{lem}\label{lem: Weyl chamber decomposition shuffle}
		We have the decomposition into disjoint sets
		$$V_0^n=\sqcup_{\sigma\in \symm_n} C_\sigma,$$
		where each $C_\sigma$ is the partially open Weyl chamber defined by 
		$$x_{\sigma_i}\ge x_{\sigma_{i+1}}\ \text{ if } \sigma_{i}<\sigma_{i+1}$$
		and
		$$x_{\sigma_i}>x_{\sigma_{i+1}}\ \text{ if } \sigma_{i}>\sigma_{i+1}.$$
		The characteristic function of $C_\sigma$, with $\sigma=(\sigma_1,\ldots, \sigma_n)$ given, is
		$$\lbrack C_\sigma\rbrack=\sum_{\pi}(-1)^{n-\operatorname{len}(\pi)}\lbrack\pi^\star\rbrack,$$
		where the sum is over the set of plates $\pi=\lbrack(S_1,\ldots, S_k)\rbrack$ which are labeled by ordered set partitions $(S_1,\ldots, S_k)$ with blocks $S_i$ defined as follows.  Let 
		$$\{d_1<\cdots<d_{k-1}\}=\{i: \sigma_i <\sigma_{i+1}\}.$$
		Also set $d_0=0$ and $d_k=n$.  Then put
		$$S_i = \{\sigma_p:d_{i-1}<p\le d_i\}.$$
		

	\end{lem}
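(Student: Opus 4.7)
My plan is to handle the two assertions independently: a tie-breaking argument for the partition of $V_0^n$, and then an inclusion--exclusion on the descent-faces of the closed Weyl chamber for the characteristic-function expansion.

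For the partition, given any $x \in V_0^n$, I would let $\sigma$ be the unique permutation sorting the coordinates of $x$ weakly from largest to smallest, $x_{\sigma_1}\ge x_{\sigma_2}\ge\cdots\ge x_{\sigma_n}$, with ties broken by the rule that smaller indices precede larger ones: if $x_{\sigma_i}=x_{\sigma_{i+1}}$ then $\sigma_i<\sigma_{i+1}$. Equivalently, ties can occur only at ascent positions of $\sigma$, and every descent position must carry a strict inequality; this is exactly the defining condition for $x\in C_\sigma$. Conversely, membership in some $C_\sigma$ determines both the sort order of $x$ and the ascent-based tie-breaking, so $\sigma$ is unique, and $V_0^n=\bigsqcup_\sigma C_\sigma$.

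For the expansion formula, I would use Proposition \ref{prop: dual higher codimension plate} to identify the closed Weyl chamber $\overline{C_\sigma}=\{x_{\sigma_1}\ge\cdots\ge x_{\sigma_n}\}$ with the dual of the singleton plate $\pi_0:=\lbrack\{\sigma_1\},\ldots,\{\sigma_n\}\rbrack$, and then write
$$C_\sigma=\overline{C_\sigma}\setminus\bigcup_{i\in D}H_i,\qquad H_i:=\overline{C_\sigma}\cap\{x_{\sigma_i}=x_{\sigma_{i+1}}\},$$
where $D$ denotes the descent set of $\sigma$. Standard inclusion--exclusion then yields
$$\lbrack C_\sigma\rbrack=\sum_{T\subseteq D}(-1)^{|T|}\lbrack E_T\rbrack,\qquad E_T:=\bigcap_{i\in T}H_i.$$
For each $T\subseteq D$, a second appeal to Proposition \ref{prop: dual higher codimension plate} identifies $E_T=\pi_T^\star$, where $\pi_T$ is the plate obtained from $\pi_0$ by fusing the adjacent singleton blocks $\{\sigma_i\},\{\sigma_{i+1}\}$ at every $i\in T$. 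One checks that $\operatorname{len}(\pi_T)=n-|T|$, so $(-1)^{|T|}=(-1)^{n-\operatorname{len}(\pi_T)}$, and that the block-boundaries of $\pi_T$ automatically contain the ascent positions $d_1,\ldots,d_{k-1}$ of $\sigma$; this identifies the index set of the sum with the family of plates described in the statement.

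The principal obstacle is the duality bookkeeping: one must recognize that imposing equalities $x_{\sigma_i}=x_{\sigma_{i+1}}$ on the Weyl-chamber side is exactly the dual operation, under $\star$, of merging consecutive singleton blocks on the plate side. Once this correspondence is tracked precisely via Proposition \ref{prop: dual higher codimension plate}, both the sign rewrite and the identification of the summation index set with the plates $\pi_T$ described in the lemma reduce to routine combinatorial inspection.
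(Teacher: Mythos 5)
Your proof is correct and follows essentially the same route as the paper's: the same tie-breaking rule (blocks listed in increasing order, so ties occur only at ascents) for the disjoint decomposition, and the same inclusion--exclusion over the codimension-one faces at descent positions, with each intersection $E_T$ identified via duality as $\pi_T^\star$ for the plate $\pi_T$ obtained by lumping $\lbrack\sigma_1,\ldots,\sigma_n\rbrack$ at the descents in $T$. Your explicit indexing by subsets $T\subseteq D$ is merely a slightly more detailed bookkeeping of what the paper writes more compactly, and your observation that block-boundaries must contain all ascent positions correctly captures what the lemma's index set of plates should be (the paper's "$=$" in the statement should more precisely be read as a containment, as your reading and the paper's own example for $\sigma=(3,2,1)$ both indicate).
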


	\begin{proof}
		Any $x\in V_0^n$ not in \textit{any} reflection hyperplane $x_i=x_j$ is in the interior of the Weyl chamber labeled by the order of its coordinate values, $x_{\sigma_1}> \cdots> x_{\sigma_n}$, say.  Now if $x$ is in the interior of a face labeled by an ordered set partition $(S_1,\ldots, S_k)$ of $\{1,\ldots, n\}$, having the form
		$$x=t_1 e_{S_1}+\cdots+t_k e_{S_k},$$
		for some $t_1>t_2>\cdots>t_k$ with $\sum_{i=1}^k\vert S_i\vert t_i=0$, then we put $x\in C_\sigma$ where the permutation $\sigma$ is obtained from $(S_1,\ldots, S_k)$ by placing the labels in each block $S_i$ in increasing order and then concatenating the blocks.

		The formula for $\lbrack C_\sigma\rbrack$ follows from the standard inclusion-exclusion expression for the characteristic function of the complement of the union of the codimension 1 faces $\lbrack \sigma_1,\ldots, \sigma_{i}\sigma_{i+1},\ldots,\sigma_n\rbrack^\star$ corresponding to descents $\sigma_i>\sigma_{i+1}$ in $\sigma$:
		$$\lbrack\sigma_{1},\ldots,\sigma_n\rbrack^\star\setminus\left(\bigcup_{\sigma_i>\sigma_{i+1}}\lbrack \sigma_1,\ldots, \sigma_{i}\sigma_{i+1},\ldots,\sigma_n\rbrack^\star\right),$$
		that is
		$$\lbrack C_\sigma\rbrack=\left\lbrack \lbrack\sigma_{1},\ldots,\sigma_n\rbrack^\star\right\rbrack-\sum_{\pi'}(-1)^{n-1-\operatorname{len}(\pi')}\lbrack(\pi')^\star\rbrack=\sum_{\pi}(-1)^{n-\operatorname{len}(\pi)}\lbrack\pi^\star\rbrack$$
		where the middle sum is over the lumpings $\pi'$ of $\lbrack\sigma_1,\ldots,\sigma_n\rbrack$ at descents $\sigma_i>\sigma_{i+1}$ (excluding $\lbrack\sigma_1,\ldots,\sigma_n\rbrack$ itself), and the right sum now includes $\lbrack\sigma_1,\ldots,\sigma_n\rbrack$.
	\end{proof}
	
	\begin{example}
		The characteristic functions of the partially open Weyl chambers respectively
		$$\{x\in V_0^3: x_1\ge x_2\ge x_3\}$$
		$$\{x\in V_0^3: x_2> x_1\ge x_3\}$$
		$$\{x\in V_0^3: x_3> x_2> x_1\}$$
		can be obtained as linear combinations of characteristic functions of dual plates as
		\begin{eqnarray*}
			\lbrack C_{(1,2,3)}\rbrack & = & \lbrack\lbrack 1,2,3\rbrack\rbrack^\star\\
			\lbrack C_{(2,1,3)}\rbrack & = & \lbrack \lbrack 2,1,3\rbrack\rbrack^\star -\lbrack\lbrack 21,3\rbrack\rbrack^\star\\
			\lbrack C_{(3,2,1)}\rbrack & = & \lbrack \lbrack 3,2,1\rbrack\rbrack^\star -\lbrack\lbrack 32,1\rbrack\rbrack^\star-\lbrack\lbrack 3,21\rbrack\rbrack^\star+\lbrack\lbrack 321\rbrack\rbrack^\star.\\
		\end{eqnarray*}
		
	\end{example}
	
	More generally, in Lemma \ref{lem: Weyl chamber decomposition shuffle} we replace Weyl chambers, labeled by permutations, with higher codimension faces of the reflection arrangement which are labeled by ordered set partitions.  Suppose 
	$$\mathbf{S}_1=(S_1,\ldots, S_{k_1}),\ldots,\mathbf{S}_l=(S_{k_1+\cdots+k_{l-1}+1},\ldots, S_{k_1+\cdots+k_{l}})$$
	are ordered set partitions of respectively $\bigcup_{S\in\mathbf{S}_i}S$, $i=1,\ldots, l$, and let $\sigma=(\sigma_1,\ldots,\sigma_l)$ be a permutation of $\{1,\ldots, l\}$.  Define an embedding $\iota:V_0^l\hookrightarrow V_0^{k_1+\cdots +k_l}$ by
	$$\sum_{i=1}^lt_i e_i\mapsto \sum_{i=1}^l \left(\frac{t_i}{\vert S_i\vert}\right) e_{S_i}.$$
	\begin{cor}\label{cor: Higher codimension Weyl chamber decomposition shuffle}
		We have
		$$\lbrack\iota(C_{\sigma})\rbrack=\sum_{\pi}(-1)^{l-\operatorname{len}(\pi)}\lbrack\iota(\pi^\star)\rbrack,$$
		where the sum is the same as in Lemma \ref{lem: Weyl chamber decomposition shuffle}.
	\end{cor}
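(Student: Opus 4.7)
The approach is to deduce the corollary directly from Lemma \ref{lem: Weyl chamber decomposition shuffle} by pushing characteristic functions forward along the injective linear embedding $\iota\colon V_0^l\hookrightarrow V_0^{k_1+\cdots+k_l}$. The lemma is already a statement about the reflection arrangement in $V_0^l$ indexed by ordered set partitions of $\{1,\ldots,l\}$, and the corollary merely asks for the same identity transported through $\iota$.

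Concretely, I first apply Lemma \ref{lem: Weyl chamber decomposition shuffle} verbatim inside $V_0^l$, viewing $\sigma$ as a permutation of $\{1,\ldots,l\}$ and the plates $\pi$ as ordered set partitions of $\{1,\ldots,l\}$ whose blocks are determined by the descent positions of $\sigma$ exactly as in the lemma. This produces the identity
\[
[C_\sigma]=\sum_{\pi}(-1)^{l-\operatorname{len}(\pi)}[\pi^\star]
\]
of characteristic functions on $V_0^l$. Next, I use that $\iota$ is a linear injection. For any subset $U\subseteq V_0^l$ one has $[\iota(U)]\circ\iota=[U]$, while $[\iota(U)]$ vanishes off the image $\iota(V_0^l)$. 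Hence any linear relation $\sum c_i[U_i]=0$ of characteristic functions on $V_0^l$ transports to the relation $\sum c_i[\iota(U_i)]=0$ of characteristic functions on $V_0^{k_1+\cdots+k_l}$; applying this to the identity above gives the claimed formula.

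The only real bookkeeping is identifying $\iota(\pi^\star)$ with the geometric face of the reflection arrangement one would write down directly inside $V_0^{k_1+\cdots+k_l}$, namely the cone obtained by equating all coordinates within each block $S_i$ and then imposing on the block sums the chain of inequalities prescribed by $\pi^\star$ via Proposition \ref{prop: dual higher codimension plate}. This is an immediate consequence of the formula $\sum t_i e_i\mapsto \sum (t_i/|S_i|)e_{S_i}$ defining $\iota$, so no genuine obstacle arises beyond this verification. An alternative route, which serves as a sanity check, is to rerun the inclusion--exclusion argument from the lemma directly inside $V_0^{k_1+\cdots+k_l}$, applied to the sub-arrangement consisting only of hyperplanes that do not split any block $S_i$; the ``Weyl chambers'' of this coarsened arrangement are precisely the sets $\iota(C_\sigma)$, and the same descent inclusion--exclusion then reproduces the asserted formula.
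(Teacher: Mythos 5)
Your proof is correct and matches the approach the paper evidently intends: the paper states this corollary without an explicit proof, leaving it as the obvious pushforward of Lemma \ref{lem: Weyl chamber decomposition shuffle} along the embedding $\iota$, which is exactly what you do. Your elementary observation that an injective linear map transports any linear relation among characteristic functions (without needing the full Barvinok--Pommersheim pushforward machinery, since injectivity makes it a direct verification) is a clean way to fill the gap, and your alternative sanity check via the coarsened sub-arrangement is also valid.
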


	In Theorem \ref{thm: shuffleLump Identity}, we replace the natural order $(1,\ldots, m)$ with an ordered set partition $$(S_{1},S_2,\ldots, S_{m})$$ of $\{1,\ldots, n\}$, where $m=k_1+\cdots+k_l$. 
	
	The proof in what follows of Theorem \ref{thm: shuffleLump Identity} illustrates the essential role of the duality isomorphism from Definition \ref{defn: duality for cones} and utilizes directly set-theoretic inclusion-exclusion arguments.  Now, by way of Theorem 5.2 of \cite{ThibonMould}, the same formula holds if we extend plates from $V_0^n$ into the ambient space $\mathbb{R}^n$, in which case the last line of the plate equations becomes $x_1+\cdots+x_n\ge 0$.
	
	\begin{thm}\label{thm: shuffleLump Identity}
		Given $l$ ordered set partitions
		$$\mathbf{S}_1=(S_{1},S_{2},\ldots, S_{k_1}),\mathbf{S}_2=(S_{k_1+1},S_{k_1+2},\ldots, S_{k_1+k_2}),\ldots, \mathbf{S}_{l}=(S_{k_1+\cdots+k_{l-1}+1},\ldots, S_{k_1+\cdots+k_{l}})$$
		such that $\bigsqcup_{i=1}^{k_1+\cdots+k_l}S_i=\{1,\ldots, n\}$, then we have the identity for characteristic functions of plates in $\hat{\mathcal{P}}^n$,
		
		$$\lbrack\lbrack\mathbf{S}_1\rbrack\rbrack\bullet \cdots\bullet \lbrack\lbrack \mathbf{S}_{l}\rbrack\rbrack= \sum_\pi(-1)^{m-\operatorname{len}(\pi)}\lbrack\pi\rbrack,$$
		where $m=k_1+\cdots+k_l$ and $\pi$ runs over all shuffle-lumpings of $\mathbf{S}_1,\ldots,\mathbf{S}_l$.
	\end{thm}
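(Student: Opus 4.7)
The plan is to dualize via the operation $\star$ of Remark \ref{rem: properties dual cones}, which interchanges $\bullet$ with $\cdot$, reducing the claim to
$$\lbrack\mathbf{S}_1\rbrack^\star\cdot\lbrack\mathbf{S}_2\rbrack^\star\cdots\lbrack\mathbf{S}_l\rbrack^\star=\sum_\pi(-1)^{m-\operatorname{len}(\pi)}\lbrack\pi^\star\rbrack.$$
By Proposition \ref{prop: dual higher codimension plate}, the left side is the indicator of the set of $y\in V_0^n$ that, for each $i$, are constant on each block of $\mathbf{S}_i$ with weakly decreasing values in the order of $\mathbf{S}_i$. I will verify that the right side matches pointwise.

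Decompose $V_0^n$ as the disjoint union of relative interiors of reflection arrangement faces $\lbrack\mathbf{V}\rbrack^\star$, indexed by ordered set partitions $\mathbf{V}$ of $\{1,\ldots,n\}$. For $y\in\operatorname{relint}(\lbrack\mathbf{V}\rbrack^\star)$, the containment $y\in\lbrack\pi^\star\rbrack$ is equivalent to $\pi$ being a refinement of $\mathbf{V}$ as an ordered set partition. If some block of $\mathbf{S}_i$ is split across multiple $\mathbf{V}$-blocks, or if the $\mathbf{V}$-blocks containing consecutive blocks of some $\mathbf{S}_i$ occur out of order, then no shuffle-lumping refines $\mathbf{V}$ and both sides evaluate to $0$. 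Otherwise, denoting by $V_{a(i,j)}$ the $\mathbf{V}$-block containing the $j$-th block of $\mathbf{S}_i$, one has $a(i,j)\le a(i,j+1)$, and the shuffle-lumping refinements of $\mathbf{V}$ correspond precisely to independent choices within each $V_a$ of a shuffle-lumping of the induced restrictions $(\mathbf{S}_i|_{V_a})_i$; the signed sum factors as
$$\sum_\pi(-1)^{m-\operatorname{len}(\pi)}=\prod_a\left(\sum_{\pi_a}(-1)^{m_a-\operatorname{len}(\pi_a)}\right),$$
where $m_a$ counts the $\mathbf{S}_i$-blocks lying inside $V_a$ and $\pi_a$ ranges over shuffle-lumpings of $(\mathbf{S}_i|_{V_a})_i$.

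The main obstacle is therefore the combinatorial identity that each local factor equals $1$, generalizing the classical $\sum_{k=1}^n(-1)^{n-k}k!\,S(n,k)=1$. To prove it for chains of lengths $k_1,\ldots,k_l$ with $m'=\sum_i k_i$, parametrize a shuffle-lumping by its surjection $f\colon[m']\to[r]$ strictly increasing on each chain; inclusion-exclusion on the image of $f$ gives the count $N(r)=\sum_j(-1)^j\binom{r}{j}P(r-j)$, with $P(s)=\prod_i\binom{s}{k_i}$ counting all functions strictly increasing on each chain. Swapping summations in $\sum_r(-1)^{m'-r}N(r)$ and applying the hockey-stick identity $\sum_{j=0}^{m'-s}\binom{s+j}{j}=\binom{m'+1}{m'-s}$ rewrites the local factor as $\sum_{t=0}^{m'}(-1)^t\binom{m'+1}{t}P(m'-t)$. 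Since $P(m'-t)$ is a polynomial of degree $m'$ in $t$, its $(m'+1)$-th finite difference vanishes, so this telescopes to $(-1)^{m'}P(-1)=(-1)^{m'}\prod_i\binom{-1}{k_i}=(-1)^{m'}(-1)^{m'}=1$. Undualizing then returns the original statement.
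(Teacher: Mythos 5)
Your proof is correct, and it takes a genuinely different route from the paper's. Both arguments begin by dualizing via the duality map of Remark \ref{rem: properties dual cones}, which turns the convolution product into an intersection of closed faces of the reflection arrangement. From there the paths diverge. The paper covers that intersection by closed Weyl chambers labeled by shuffles, replaces them by the \emph{partially open} chambers $C_\tau$ of Lemma \ref{lem: Weyl chamber decomposition shuffle} to get a set-theoretically disjoint union, argues this disjoint union is already closed, and then imports the inclusion-exclusion expansion of each $\lbrack C_\tau\rbrack$ proved in that Lemma. You instead verify the identity pointwise: you stratify $V_0^n$ into relative interiors of faces $\lbrack\mathbf{V}\rbrack^\star$, note that for $y$ in such a stratum the indicator $\lbrack\pi^\star\rbrack$ is nonzero precisely when $\pi$ refines $\mathbf{V}$, then show the signed sum of shuffle-lumping refinements factorizes over the blocks $V_a$ of $\mathbf{V}$, reducing to the claim that each local factor equals $1$. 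That arithmetic identity --- your generalization of $\sum_k(-1)^{n-k}k!\,S(n,k)=1$, which you establish by interpreting shuffle-lumpings as surjections strictly increasing on each chain, applying inclusion-exclusion, the hockey-stick identity, and the vanishing of the $(m'+1)$st finite difference of a degree-$m'$ polynomial --- plays the role that Lemma \ref{lem: Weyl chamber decomposition shuffle} and the closedness-of-the-disjoint-union argument play in the paper. What each approach buys: the paper's is geometric, reuses a lemma that is also needed elsewhere (e.g.\ in Theorem \ref{thm:treeExpansionCompletePlates}), and makes it transparent that the individual terms arise as open-face inclusion-exclusion; yours is more computational at the key step but is self-contained, never invokes the open Weyl chamber decomposition, and has the pleasant feature of isolating the result as a purely enumerative statement about Stirling-type alternating sums.
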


	\begin{proof}
		We shall work in the space of characteristic functions of faces of the reflection arrangement and then dualize to obtain the identity for characteristic functions of plates.  
		
		We have 
		\begin{eqnarray*}\label{eqn:weight plates for shuffleLump identity}
			\lbrack\mathbf{S}_i\rbrack^\star & = & \left\{\sum_{i=1}^{m}t_ie_{S_i}\in V_0^n:t_{k_{i-1}+1}\ge \cdots\ge t_{k_{i}}\right\}\\
		\end{eqnarray*}
		and thus
		\begin{eqnarray*}
			\lbrack \mathbf{S}_1\rbrack^\star \cap\cdots\cap\lbrack \mathbf{S}_l\rbrack^\star & = & \left\{\sum_{i=1}^{m}t_ie_{S_i}:\begin{array}{c}
				t_1\ge t_2\ge \cdots\ge t_{k_1}	, 	\\ 
				t_{k_1+1}\ge\cdots\ge t_{k_1+k_2}, 	\\ 
				\vdots							\\
				t_{k_1+\cdots+k_{l-1}+1}\ge\cdots\ge t_{k_1+\cdots+k_l}
			\end{array} \right\},\\
		\end{eqnarray*}
		which lives in a copy of $V_0^m$ embedded in $V_0^n$ as $\iota(\sum_{i=1}^{m}t_ie_i) \mapsto \sum_{i=1}^{m}t_i (e_{S_i}/\vert S_i\vert)$.
		
		Then $\iota^{-1}\left(\lbrack \mathbf{S}_1\rbrack^\star \cap\cdots\cap\lbrack \mathbf{S}_l\rbrack^\star\right)\subseteq V_0^m$ is a union of Weyl chambers $\bigcup_{\tau}\lbrack\tau\rbrack^\star$
		defined by $y_{\tau_1}\ge\cdots\ge y_{\tau_m}$ labeled by shuffles $(\tau_1,\ldots,\tau_m)$ of
		$$\sigma_1=(1,2,\ldots, k_1),\ldots, \sigma_l=\left(m-k_l+1,\ldots, m\right).$$  
		We replace each such (closed) Weyl chamber defined by $y_{\tau_1}\ge\cdots\ge y_{\tau_m}$ with the partially open Weyl chamber $C_\tau$ from Lemma \ref{lem: Weyl chamber decomposition shuffle} and obtain the disjoint union 
		\begin{eqnarray*}
			\iota^{-1}\left(\lbrack \mathbf{S}_1\rbrack^\star \cap\cdots\cap\lbrack \mathbf{S}_l\rbrack^\star\right) & \supseteq & \bigsqcup_{\tau}C_\tau,
		\end{eqnarray*}
		where the disjoint union is dense in the (closed) left hand side.  Thus, equality will follow once we establish that $\bigsqcup_{\tau}C_\tau$ is already topologically closed.  
		
		Supposing $x$ is in a missing boundary face of some partially open Weyl chamber $C_\tau$, then the coordinates of $x$ satisfy an equality $x_d=x_{d+1}$ where $\tau_d>\tau_{d+1}$ is a descent of $\tau$.  But since $\sigma_1,\ldots, \sigma_l$ are all increasing, this can happen only if $\tau_d$ and $\tau_{d+1}$ belong to two different permutations, say $\sigma_i$ and respectively $\sigma_j$, for some $i\not=j$.  This implies that the permutation $\tau'$ obtained from $\tau$ by switching $\tau_d$ and $\tau_{d+1}$ is also a shuffle of $\sigma_1,\ldots, \sigma_l$, hence $x\in C_{\tau'}$, proving the equality.
		
		This, together with the expansion from Lemma \ref{lem: Weyl chamber decomposition shuffle} for the characteristic function $\lbrack C_\tau\rbrack$ implies the identity for characteristic functions
		$$\lbrack\iota^{-1}\left(\lbrack \mathbf{S}_1\rbrack^\star \cap\cdots\cap\lbrack \mathbf{S}_l\rbrack^\star\right)\rbrack=\sum_{\tau}\lbrack C_\tau\rbrack=\sum_{\tau}\left(\sum_{\pi_\tau}(-1)^{m-\operatorname{len}(\pi_\tau)}\lbrack\iota^{-1}(\pi_\tau^\star)\rbrack\right),$$
		where the inner sum is over all lumpings of the plate $\pi_\tau$ which can occur at the descents of $\tau$.
		
		It follows from Theorem 2.3 of \cite{BarvinokPammersheim} that the $\iota$ induces a unique linear map on the space of characteristic functions, and we obtain
		$$\lbrack\lbrack \mathbf{S}_1\rbrack^\star \cap\cdots\cap\lbrack \mathbf{S}_l\rbrack^\star\rbrack=\sum_{\tau}\lbrack \iota(C_\tau)\rbrack=\sum_{\tau}\left(\sum_{\pi_\tau}(-1)^{m-\operatorname{len}(\pi_\tau)}\lbrack\pi_\tau^\star\rbrack\right).$$
		We finally dualize again to obtain the sum over all shuffle-lumpings
		$$\lbrack\lbrack \mathbf{S}_1\rbrack\rbrack\bullet \cdots \bullet\lbrack \lbrack\mathbf{S}_l\rbrack\rbrack=\sum_\pi(-1)^{m-\operatorname{len}(\pi)}\lbrack\pi\rbrack.$$

	\end{proof}
	
	Note that in the convolution $\lbrack\lbrack\mathbf{S}_1\rbrack\rbrack \bullet\cdots\bullet\lbrack\lbrack\mathbf{S}_l\rbrack\rbrack$, since the subsets $\cup_{S\in \mathbf{S}_1}S,\ldots, \sum_{S\in\mathbf{S}_l}S$ are assumed to be mutually disjoint, the plates $\lbrack\mathbf{S}_i\rbrack$ live in mutually orthogonal subspaces 
	$$\left\{x\in V_0^n:\sum_{S\in\mathbf{S}_i}x_S =0,\text{ and } x_j=0\text{ for } j\not\in\cup_{S\in\mathbf{S}_i}S\right\}\subseteq V_0^n,$$
	for $i=1,\ldots, l$.
	\begin{example}
		Let $S_{1}=\{1\}$ and $(S_{2},S_{3})=(\{2\},\{3\})$.  Then Theorem \ref{thm: shuffleLump Identity} says that
		$$\lbrack\lbrack 1\rbrack\rbrack\bullet \lbrack\lbrack 2,3\rbrack\rbrack =\lbrack\lbrack 1,2,3\rbrack\rbrack+\lbrack\lbrack 2,1,3\rbrack\rbrack+\lbrack\lbrack 2,3,1\rbrack\rbrack-\left(\lbrack\lbrack 12,3\rbrack\rbrack+\lbrack\lbrack 2,13\rbrack\rbrack\right).$$
		See Figure \ref{fig:plate-relations-complete}.
	\end{example}
	
	\begin{figure}[h!]
		\centering
		\includegraphics[width=.80\linewidth]{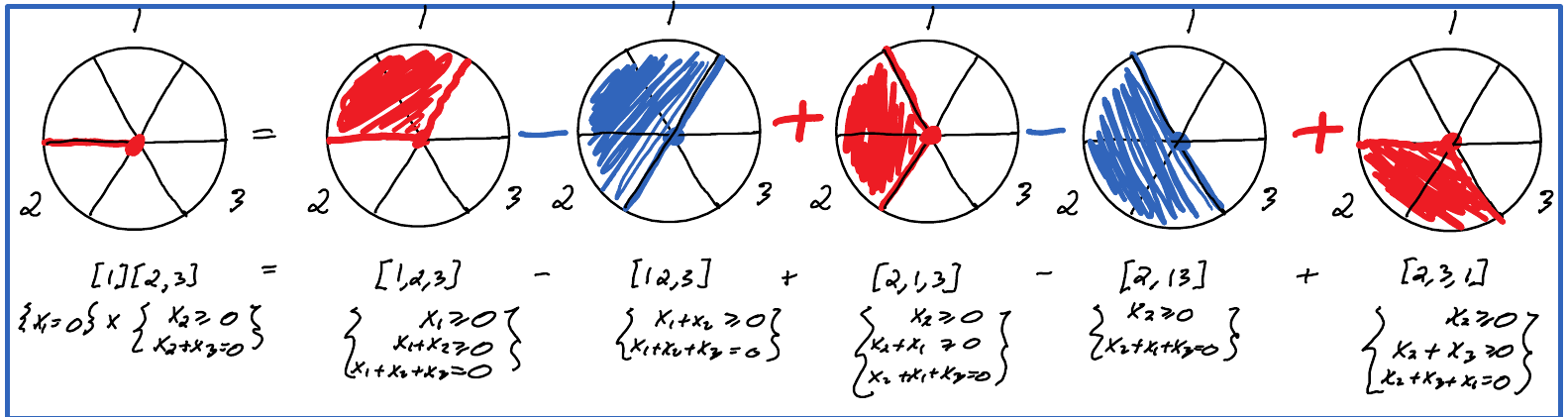}
		\caption{The characteristic function $\lbrack\lbrack 1\rbrack\rbrack\bullet \lbrack\lbrack 2,3\rbrack\rbrack$}
		\label{fig:plate-relations-complete}
	\end{figure}
	
	\section{Constructing the canonical plate basis}\label{sec: uppertriangularmap}

	We generalize the notion of the cycle decomposition, from permutations to ordered set partitions.  The geometric motivation is to establish a graded basis for $\hat{\mathcal{P}}^n$ such that the $d^\text{th}$ graded piece is spanned by characteristic functions of faces of dimension $d$ of the all-subset hyperplane arrangement.  These faces are Minkowski sums of standard plates which lie in mutually orthogonal subspaces of $V_0^n$.
	
	In what follows, we fix once and for all the \textit{standard} ordered set partition $(I_1,I_2,\dots, I_n)$ of $\{1,\ldots, n\}$, where $I_j=\{j\}$.  For compactness, we shall abuse notation and write $j$ instead of $I_j$.  However, it should not be forgotten that this obscures an action of the product group $\mathfrak{S}_n\times \mathfrak{S}_n$, where one factor permutes the order of the blocks and the other permutes their contents.

	\begin{defn}\label{defn:composite set partition}
		A \textit{composite set partition} of $\{1,\ldots, n\}$ is a set $\{\mathbf{S}_1,\ldots,\mathbf{S}_l\}$ where each $\mathbf{S}_i$ is an ordered set partition of a subset $J_i\subset \{1,\ldots, n\}$, such that $\{J_1,\ldots, J_l\}$ is an (unordered) set partition of $\{1,\ldots, n\}$.  If each $\mathbf{S}_i$ has the property that its first block contains the minimal label in $J_i$, then the composite set partition is called \textit{standard.}
	\end{defn}
Say that a composite set partition $\{\mathbf{S}_1,\ldots,\mathbf{S}_l\}$ satisfies the orientations, $p_a<p_b$ respectively $p_a>p_b$, if either (1) $a$ and $b$ are in different ordered set partitions $\mathbf{S}_i$ and respectively $\mathbf{S}_j$, or (2) $a$ and $b$ are both in some ordered set partition $\mathbf{S}_i$ and $p_a<p_b$, respectively $p_a<p_b$.  Say that $p_a=p_b$ if and only if $a$ and $b$ are in the same block of the same ordered set partition.  Note that any composite set partition $\{\mathbf{S}_1,\ldots,\mathbf{S}_l\}$ labels a convolution product of characteristic functions of plates.  

Looking toward Theorem \ref{thm:unitriangularMap} we define preemptively the canonical bases for the four spaces $\hat{\mathcal{P}}^n$, $\mathcal{P}^n$, $\hat{\mathcal{P}}_1^n$ and $\mathcal{P}_1^n$.
	\begin{defn}
		The canonical basis for $\hat{\mathcal{P}}^n$ will consist of all characteristic functions
	$$\lbrack\lbrack\mathbf{S}_1\rbrack\rbrack\bullet\cdots\bullet\lbrack\lbrack \mathbf{S}_l\rbrack\rbrack,$$
	as $\{\mathbf{S}_1,\ldots,\mathbf{S}_l\}$ runs over all standard composite set partitions.  The canonical basis for $\mathcal{P}^n$ will be obtained from the subset of the above, consisting of all characteristic functions
	$$\lbrack\lbrack S_1,\ldots, S_k\rbrack\rbrack,$$
	as $(S_1,\ldots, S_k)$ runs over all ordered set partitions of $\{1,\ldots, n\}$ such that $1\in S_1$.  The canonical basis for $\hat{\mathcal{P}}_1^n$ will be obtained from the subset of the above, consisting of all characteristic functions
	$$\lbrack\lbrack\mathbf{S}_1\rbrack\rbrack\bullet\cdots\bullet\lbrack\lbrack \mathbf{S}_l\rbrack\rbrack$$
	labeled by standard composite ordered set partitions $(\mathbf{S}_1\rbrack\rbrack\bullet\cdots\bullet\lbrack\lbrack \mathbf{S}_l)$ of $\{1,\ldots, n\}$ where each block of each $\mathbf{S}_i$ is a singlet.  Finally, the canonical basis for $\mathcal{P}^n$ consists of the ordered set partitions labeled by permutations with first label 1:
	$$\{\lbrack\lbrack 1,i_2,\ldots, i_n\rbrack\rbrack: (i_2,\ldots, i_n)\text{ is a permutation of }\{2,\ldots, n\}\}.$$
	
	\end{defn}

	In Lemma \ref{lem:FoataSets} we define a bijection $\mathcal{U}$ between ordered set partitions $(S_1,\ldots, S_k)$ of the set $\{1,\ldots, n\}$ and standard composite set partitions.  This bijection induces an endomorphism of $\hat{\mathcal{P}}^n$ appears quite analogous to the canonical decomposition of the homogeneous component of the free Lie algebra, see the discussion around Lemma 8.22 in \cite{ReutenauerFreeLieAlgebras}.  Also note the similarity to Foata's transform.  It would be very interesting to look into these further, but we leave the investigation to future work.
	
	\begin{lem}\label{lem:FoataSets}
		There exists a bijection $\mathcal{U}$ between ordered set partitions of $\{1,\ldots, n\}$ and standard composite set partitions.
	\end{lem}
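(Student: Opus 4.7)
The plan is to construct $\mathcal{U}$ explicitly by generalizing the classical bijection between one-line and canonical cycle notations of a permutation: a permutation in one-line form splits into cycles precisely at its left-to-right minima, with each cycle written smallest-element-first and the cycles read off in the order dictated by those minima. Here I would replace individual entries by the blocks $S_i$ of an ordered set partition, using $\min(S_i)$ as the tracked statistic in place of a numerical entry.

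Concretely, given an ordered set partition $(S_1,\ldots,S_k)$ of $\{1,\ldots,n\}$, let $1=i_1<i_2<\cdots<i_l\le k$ be the positions at which the sequence $(\min S_1,\min S_2,\ldots,\min S_k)$ attains a new left-to-right minimum, and with the convention $i_{l+1}:=k+1$ define
$$\mathbf{S}_j = (S_{i_j},S_{i_j+1},\ldots,S_{i_{j+1}-1}), \qquad j=1,\ldots,l.$$
I would set $\mathcal{U}(S_1,\ldots,S_k)=\{\mathbf{S}_1,\ldots,\mathbf{S}_l\}$. The first step is to check that this is a standard composite set partition, i.e., that $\min(J_j)$ lies in the first block $S_{i_j}$ of $\mathbf{S}_j$, where $J_j=\bigcup_{a=i_j}^{i_{j+1}-1}S_a$. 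This is the key verification: for any $a$ with $i_j<a<i_{j+1}$, the block $S_a$ is by definition \emph{not} a new LR-minimum, so $\min S_a$ exceeds the current running minimum, which is exactly $\min S_{i_j}$. Because the $\min S_i$ are pairwise distinct (the $S_i$ being blocks of a set partition), this yields $\min S_{i_j}<\min S_a$ for all interior $a$, and hence $\min(J_j)=\min(S_{i_j})$ as required.

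For the inverse, given a standard composite set partition $\{\mathbf{S}_1,\ldots,\mathbf{S}_l\}$, the minimum of the first block of each $\mathbf{S}_j$ equals $\min(J_j)$ by standardness, and these $l$ values are pairwise distinct. I would list the $\mathbf{S}_j$'s in strictly decreasing order of this minimum and concatenate their block sequences to produce an ordered set partition of $\{1,\ldots,n\}$. To verify bijectivity, one checks the two composites. In the forward-then-backward direction, the block minima $\min S_{i_1},\min S_{i_2},\ldots,\min S_{i_l}$ are automatically strictly decreasing (each is a new LR-minimum), so re-ordering and concatenating recovers the original $(S_1,\ldots,S_k)$. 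In the backward-then-forward direction, the standardness condition guarantees that within each $\mathbf{S}_j$ only the first block is an LR-minimum of the concatenated sequence, while jumping from $\mathbf{S}_j$ to $\mathbf{S}_{j+1}$ produces a new LR-minimum by the decreasing ordering; hence the LR-minimum cut recovers exactly the given grouping.

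I expect the main obstacle to be purely conceptual: recognizing that cutting at left-to-right minima of the sequence $\bigl(\min S_i\bigr)$ is the right generalization of cycle splitting, and that the standardness condition (first block contains $\min J_j$) is precisely the property equivalent to ``no interior block is an LR-minimum.'' Once this dictionary is in place the bookkeeping is routine, and the analogy with Foata's first fundamental transformation alluded to before the lemma statement becomes a literal coincidence when all blocks are singletons.
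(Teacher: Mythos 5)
Your proposal is correct and matches the paper's construction in substance: cutting $(S_1,\ldots,S_k)$ at the left-to-right minima of the sequence $(\min S_1,\ldots,\min S_k)$ produces exactly the segments that the paper builds inductively by peeling off, from the right, the maximal tail whose leftmost block contains the smallest not-yet-covered element — these are the same cut points, as one sees by noting that the smallest element of $\{1,\ldots,n\}\setminus(J_1\cup\cdots\cup J_{j-1})$ is necessarily the $\min$ of its own block and hence sits at the last LR-minimum position of the remaining prefix. Your reconstruction step (concatenate the $\mathbf{S}_j$'s in \emph{strictly decreasing} order of their minimal elements) is also what the forward construction requires, so the pair of maps inverts correctly; the only thing you might add is the one-line observation that a composite set partition is an \emph{unordered} set, so the labeling of the $\mathbf{S}_j$'s by LR-minimum position versus the paper's backward indexing produces the same object.
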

	\begin{proof}
		
		Let $(S_1,\ldots, S_k)$ be an ordered set partition of $\{1,\ldots, n\}$.  We first construct from $(S_1,\ldots, S_k)$ a standard composite set partition $\{\mathbf{S}_1,\ldots,\mathbf{S}_l\}$, where
		\begin{eqnarray*}
			\mathbf{S}_1 & = & (S_{i_l},\ldots, S_{k-1},S_{k})\\
			\mathbf{S}_2 & = & (S_{i_{l-1}},\ldots, S_{i_l-1})\\
			& \vdots &\\
			\mathbf{S}_{l-1} & = & (S_{i_2},\ldots,S_{i_3-1})\\
			\mathbf{S}_l & = & (S_1,S_2,\ldots, S_{i_2-1}).
		\end{eqnarray*}
	Let us first fix the notation $J_i=\cup_{S\in\mathbf{S}_i} S$.  The construction is inductive:
		\begin{itemize}
			\item If $1\in S_{i_l},$ define $\mathbf{S}_{1}=(S_{i_l},\ldots, S_k)$.
			\item If standard ordered set partitions $\mathbf{S}_1,\ldots, \mathbf{S}_{j-1}$ have been determined, so that $(S_1,\ldots, S_k)$ is the concatenation
			$$(S_1,\ldots, S_k)=(S_1,\ldots, S_{a},\mathbf{S}_{j-1},\ldots, \mathbf{S}_1),$$
			then define $\mathbf{S}_j$ to equal
			$$\mathbf{S}_j=(S_{i_j},\ldots, S_{a-1},S_a),$$
			where the block $S_{i_j}\in \{S_1,\ldots, S_a\}$ is characterized by the property that it contains the smallest element $s_{i_j}\in\{1,\ldots, n\}$ in the complement in $(S_1,\ldots, S_k)$ of the segment to the right of $\mathbf{S}_{j-1}$, inclusive:
			$$s_{i_j}=\text{min}\left(\{1,\ldots, n\}\setminus \left(J_{j-1}\cup\cdots\cup J_1\right)\right).$$
			
		\end{itemize}

		
		
		
		Conversely, if $\{\mathbf{S}_1,\ldots, \mathbf{S}_l\}$ is a standard composite ordered set partition with minimal first block elements respectively $s_1,\ldots, s_l$, let $(s_{i_1},\ldots, s_{i_l})$ be the permutation of $(s_1,\ldots, s_l)$ such that $s_{i_1}>\cdots>s_{i_l}$.  We then reconstruct the ordered set partition $(S_1,\ldots, S_k)$ as the concatenation $(\mathbf{S}_{i_l},\mathbf{S}_{i_{l-1}},\ldots\mathbf{S}_{i_1})$.
	\end{proof}
	
%

	In Theorem \ref{thm:unitriangularMap}, combining Theorem \ref{thm: shuffleLump Identity} and Lemma \ref{lem:FoataSets}, starting from the bijection $\mathcal{U}$ we shall induce a linear map (which we also denote by $\mathcal{U}$) to derive an automorphism of $\hat{\mathcal{P}}^n$, proving linear independence of the candidate \textit{canonical basis} 
$$\left\{\lbrack\lbrack\mathbf{S}_1\rbrack\rbrack\bullet\cdots\bullet\lbrack\lbrack \mathbf{S}_l\rbrack\rbrack:\{\mathbf{S}_1,\ldots,\mathbf{S}_l\} \text{ is a standard composite set partition of $\{1,\ldots, n\}$}\right\}$$
of $\hat{\mathcal{P}}^n$.  Namely, for the first step we have the linear map defined on the basis by 
$$\lbrack\pi\rbrack\mapsto \mathcal{U}(\lbrack\pi\rbrack)$$
where on the right-hand side $\mathcal{U}(\lbrack\pi\rbrack)=\lbrack\lbrack \mathbf{S}_1\rbrack\rbrack\bullet\cdots\bullet \lbrack\lbrack \mathbf{S}_l\rbrack\rbrack$, say, is labeled by the standard composite set partition $\{\mathbf{S}_1,\ldots, \mathbf{S}_l\}$.  As for the second step, by Theorem \ref{thm: shuffleLump Identity} this expands in the original basis of $\hat{\mathcal{P}}^n$ consisting of plates $\lbrack \pi\rbrack$:
$$\mathcal{U}(\lbrack\pi\rbrack) = \sum_\pi(-1)^{m-\operatorname{len}(\pi)}\lbrack\pi\rbrack,$$
where $m=\text{len}(\mathbf{S}_1)+\cdots+\text{len}(\mathbf{S}_l)$ and $\pi$ runs over all shuffle-lumpings of the standard ordered set partitions $\mathbf{S}_1,\ldots, \mathbf{S}_l$.

	It may be interesting to compare Theorem \ref{thm:unitriangularMap} in what follows with Theorem 5.1 of \cite{ReutenauerFreeLieAlgebras} on the construction of the Hall basis of the free Lie algebra, using the set of Lyndon words, ordered alphabetically, for the Hall set.

	We define a map $\mathcal{C}$ from the set of ordered set partitions to the set of \textit{packed words} in $\{0,\ldots, n-1\}$, that is sequences $\{c_1,\ldots, c_n\}\in\{0,\ldots, n-1\}^n$ satisfying the conditions
	\begin{enumerate}
		\item $0\in\{c_1,\ldots, c_n\}$ and 
		\item Successive values increase in steps of 1.
	\end{enumerate}
	
	\begin{prop}\label{prop: bijection}
		There exists a bijection
		$$\mathcal{C}:\{\text{Ordered set partitions of } \{1,\ldots, n\}\} \rightarrow \{\text{packed words in } \{0,\ldots, n-1\}\}.$$ 
		\end{prop}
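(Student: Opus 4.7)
The plan is to write down the obvious bijection and then verify the two conditions defining packed words. Given an ordered set partition $\mathbf{S}=(S_1,\ldots,S_k)$ of $\{1,\ldots,n\}$, define $\mathcal{C}(\mathbf{S})=(c_1,\ldots,c_n)$ by the rule
\[
c_i = j-1 \quad \text{where $j$ is the unique index with $i\in S_j$.}
\]
Because each $S_j$ is nonempty, every value in $\{0,1,\ldots,k-1\}$ is attained by some $c_i$, and conversely no other values occur. Hence $0\in\{c_1,\ldots,c_n\}$ and the set of distinct values is exactly $\{0,1,\ldots,k-1\}$, which is the same as saying the distinct values form an initial segment increasing in steps of $1$; so $\mathcal{C}(\mathbf{S})$ is a packed word.

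For the inverse, given a packed word $(c_1,\ldots,c_n)$, let $k$ be one more than its maximum entry. Set
\[
S_j = \{\,i\in\{1,\ldots,n\}:c_i=j-1\,\}, \qquad j=1,\ldots,k,
\]
and define $\mathcal{C}^{-1}(c_1,\ldots,c_n)=(S_1,\ldots,S_k)$. The packed-word conditions guarantee that every $j-1\in\{0,\ldots,k-1\}$ occurs among the $c_i$, so each $S_j$ is nonempty. Disjointness of the $S_j$ and the equality $S_1\sqcup\cdots\sqcup S_k=\{1,\ldots,n\}$ follow from the fact that each index $i$ has exactly one value $c_i$, so $(S_1,\ldots,S_k)$ is a genuine ordered set partition.

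Finally, one checks that $\mathcal{C}^{-1}\circ\mathcal{C}$ and $\mathcal{C}\circ\mathcal{C}^{-1}$ are both identities by unwinding the definitions: $i\in S_j$ if and only if $c_i=j-1$, which is the defining condition for either direction. Since these verifications are purely mechanical, no substantive obstacle arises; the only conceptual content of the proposition is the observation that the packed-word conditions are precisely what is needed to ensure the resulting partition has no empty blocks and uses each block index exactly once, which is exactly what the two bullet-point conditions encode.
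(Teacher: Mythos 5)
Your proposal is correct and follows essentially the same route as the paper: you define $c_i = j-1$ when $i \in S_j$, observe this yields a packed word because the nonempty blocks $S_1,\ldots,S_k$ force the values $\{0,\ldots,k-1\}$ to all occur, and then invert by setting $S_j = \{i : c_i = j-1\}$. The paper's proof is the same construction, stated slightly more tersely.
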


		\begin{proof}
			Let $\mathbf{T}=(T_1,\ldots, T_{k})$ be an ordered set partition of $\{1,\ldots, n\}$.  We define a sequence $(c_1,\ldots, c_n)$ by $c_i=j-1$, if $i\in T_j$.  By construction $0=c_i$ for some $i$ and the sequence is a packed word on $\{0,\ldots, n-1\}$: if some $c\ge 1$ satisfies $c\in\{c_1,\ldots, c_n\}$ then $c-1$ satisfies the same.  Define $\mathcal{C}(\mathbf{T})=(c_1,\ldots, c_n)$.  Conversely, if $(c_1,\ldots, c_n)$ is a packed word on $\{0,\ldots, n-1\}$, define an ordered set partition $\mathbf{T}=(T_1,\ldots, T_k)$ by letting $T_i=\{m\in\{1,\ldots, n\}:c_m=i-1\}$.  
		\end{proof}
In what follows, we induce a partial order on the ordered set partitions from the lexicographic order on packed words in $\{0,\ldots, n-1\}$.
	
	\begin{defn}\label{defin:lexicographicorder}
		Given two plates $\pi_1=\lbrack(T_1,\ldots, T_k)\rbrack,\ \pi_2=\lbrack(T_1',\ldots, T_{l}')\rbrack$, say that $\pi_1 \prec \pi_2$ if 
		$$\mathcal{C}(T_1,\ldots, T_k)<\mathcal{C}(T_1',\ldots, T_l')$$ in the lexicographic order.  This induces a total order on the set of plates, and thus on the basis of their characteristic functions, in $\hat{\mathcal{P}}^n$.
	\end{defn}
	
	Clearly the first element of ordered basis is the characteristic function of $V_0^n$ itself, since it is labeled by the trivial ordered set partition, hence $\mathcal{C}(\lbrack 12\cdots n\rbrack)=(0,\ldots,0)$.  Similarly, the last element is labeled by the ordered set partition $(\{n\},\{n-1\},\ldots, \{2\},\{1\})$, where we have $\mathcal{C}(\lbrack n,n-1,\ldots, 2,1\rbrack)=(n-1,n-2\ldots,1,0)$.

We now come to our main result.
	\begin{thm}\label{thm:unitriangularMap}
		Let $\mathcal{B}^n$ be the set of characteristic functions of plates for the space $\hat{\mathcal{P}}^n$, labeled by ordered set partitions of $\{1,\ldots, n\}$.  By Proposition \ref{Prop: UniversalPlateModuleFreelyGenerated} this is a basis, which we order lexicographically.  Then, the set 
		$$\left\{\lbrack\lbrack\mathbf{S}_1\rbrack\rbrack\bullet\cdots\bullet\lbrack\lbrack \mathbf{S}_l\rbrack\rbrack:\{\mathbf{S}_1,\ldots,\mathbf{S}_l\} \text{ is a standard composite set partition of $\{1,\ldots, n\}$}\right\}$$
		is also a basis.

	\end{thm}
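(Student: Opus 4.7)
The plan is to define the linear endomorphism $\mathcal{U}$ of $\hat{\mathcal{P}}^n$ by setting $\mathcal{U}(\lbrack\pi\rbrack) = \lbrack\lbrack\mathbf{S}_1\rbrack\rbrack \bullet \cdots \bullet \lbrack\lbrack\mathbf{S}_l\rbrack\rbrack$ whenever $\mathcal{U}(\pi) = \{\mathbf{S}_1, \ldots, \mathbf{S}_l\}$ as in Lemma \ref{lem:FoataSets}. Since that lemma gives a bijection on index sets, the image $\mathcal{U}(\mathcal{B}^n)$ coincides exactly with the claimed canonical basis, so it is enough to show $\mathcal{U}$ is a linear automorphism. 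I would accomplish this by proving that the matrix of $\mathcal{U}$ in $\mathcal{B}^n$ is upper unitriangular with respect to the lex order of Definition \ref{defin:lexicographicorder}.

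By Theorem \ref{thm: shuffleLump Identity},
$$\mathcal{U}(\lbrack\pi\rbrack) = \sum_{\pi'} (-1)^{k - \operatorname{len}(\pi')} \lbrack\pi'\rbrack,$$
where $k = \operatorname{len}(\pi) = k_1 + \cdots + k_l$ and $\pi'$ ranges over all shuffle-lumpings of $\mathbf{S}_1, \ldots, \mathbf{S}_l$. The construction in Lemma \ref{lem:FoataSets} exhibits $\pi$ itself as a shuffle-lumping of the $\mathbf{S}_i$, namely their concatenation in the order determined by the first-block minima; as $\operatorname{len}(\pi) = k$, this term contributes coefficient $+1$, pinning the diagonal of the matrix of $\mathcal{U}$ at $+1$. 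The substantive claim is then that every other shuffle-lumping $\pi' \neq \pi$ satisfies $\pi' \prec \pi$. I would prove this by induction on $l$. The case $l = 1$ is immediate, since strict orientation makes $\pi$ the only shuffle-lumping. For $l \geq 2$, write $J_1 = \bigcup_{S \in \mathbf{S}_1} S$ and let $p_1 = k - k_1 + 1$ be the position of $1$ in $\pi$. In any shuffle-lumping $\pi' = (T_1, \ldots, T_m)$ the $k_1 - 1$ blocks of $\mathbf{S}_1$ after its first block must occupy distinct, strictly later positions; together with $m \leq k$ this forces the position $q_1$ of $1$ in $\pi'$ to satisfy $q_1 \leq p_1$. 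Strict inequality gives $\pi' \prec \pi$ from the first entry of $\mathcal{C}$. In the equality case $q_1 = p_1$ one concludes $m = k$ with no lumping whatsoever, and that the last $k_1$ positions of $\pi'$ are exactly the blocks of $\mathbf{S}_1$ in order; hence the prefix $(T_1, \ldots, T_{p_1 - 1})$ is a plain shuffle of $\mathbf{S}_2, \ldots, \mathbf{S}_l$. Elements of $J_1$ then occupy identical positions in $\pi'$ and $\pi$, so the first index at which $\mathcal{C}(\pi')$ can differ from $\mathcal{C}(\pi)$ lies in $\{1, \ldots, n\} \setminus J_1$; a direct check of Lemma \ref{lem:FoataSets} identifies $\mathcal{U}^{-1}\{\mathbf{S}_2, \ldots, \mathbf{S}_l\}$ with the prefix $\pi_{-1}$ of $\pi$, so the inductive hypothesis applied on $\{1, \ldots, n\} \setminus J_1$ bounds this prefix above by $\pi_{-1}$ in lex, with equality forcing $\pi' = \pi$.

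The principal obstacle is the discrete bookkeeping in the equality case: one must carefully verify that $q_1 = p_1$ indeed forces the absence of all lumping and the packed tail structure of $\mathbf{S}_1$, and also that agreement of $\mathcal{C}$ on $J_1$ reduces the full lex comparison on $\{1, \ldots, n\}$ to the restricted lex comparison to which induction applies. Once unitriangularity is in hand, $\mathcal{U}$ is an isomorphism, hence $\mathcal{U}(\mathcal{B}^n)$ is a basis; by Lemma \ref{lem:FoataSets} this is precisely the claimed canonical basis of $\hat{\mathcal{P}}^n$.
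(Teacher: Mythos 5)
Your proposal is correct and takes essentially the same approach as the paper: define $\mathcal{U}$ on $\mathcal{B}^n$ via Lemma \ref{lem:FoataSets}, expand using Theorem \ref{thm: shuffleLump Identity}, and show the matrix of $\mathcal{U}$ is upper unitriangular because $\pi$ is the lex-maximal shuffle-lumping of the $\mathbf{S}_i$. The only difference is that you supply a careful induction on $l$ (descending on the position of the label $1$ and then restricting to $\{1,\dots,n\}\setminus J_1$) where the paper's proof simply asserts that maximality "follows from the construction" in Lemma \ref{lem:FoataSets}.
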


%
	
	\begin{proof}
			By Theorem \ref{thm: shuffleLump Identity} we have
		$$\lbrack\lbrack\mathbf{S}_1\rbrack\rbrack\bullet \cdots\bullet \lbrack\lbrack \mathbf{S}_{l}\rbrack\rbrack= \sum_\pi(-1)^{m-\operatorname{len}(\pi)}\lbrack\pi\rbrack,$$
		where $m=k_1+\cdots+k_l$ and $\pi$ runs over all shuffle-lumpings of the ordered set partitions $\mathbf{S}_1,\ldots,\mathbf{S}_l$.
		
		We prove that the matrix of the endomorphism 
		$$\lbrack\pi\rbrack\mapsto \mathcal{U}(\lbrack\pi\rbrack)=\lbrack\lbrack\mathbf{S}_1\rbrack\rbrack\bullet \cdots\bullet \lbrack\lbrack \mathbf{S}_{l}\rbrack\rbrack=\sum_\pi(-1)^{m-\operatorname{len}(\pi)}\lbrack\pi\rbrack $$
		is upper triangular with 1's on the diagonal, with respect to the lexicographically ordered basis $\mathcal{B}^n$ of plates.
	
		First note that as $\mathcal{U}(\lbrack\pi\rbrack)$ contains $\lbrack\pi\rbrack$ itself as a summand, it suffices to prove that $\mathcal{U}$ is order non-increasing with respect to the lexicographic ordering from Definition \ref{defin:lexicographicorder} on ordered set partitions.  
		
		With $\{\mathbf{S}_1,\ldots, \mathbf{S}_l\}$ the standard composite set partition coming from Lemma \ref{lem:FoataSets} applied to the plate $\pi=\lbrack S_1,\ldots, S_k\rbrack$, consider an arbitrary (signed) summand of $\mathcal{U}(\lbrack\pi\rbrack)$.  Such a summand is labeled by a shuffle-lumping $\mathbf{T}=(T_1,\ldots, T_m)$ of the ordered set partitions $\mathbf{S}_1,\ldots, \mathbf{S}_l$, with respect to the ordered set partition $(\{1\},\ldots, \{n\})$.  It follows from the construction of the ordered set partitions $\mathbf{S}_1,\ldots,\mathbf{S}_l$ from $(S_1,\ldots, S_k)$ in Lemma \ref{lem:FoataSets}, that among all of their shuffle-lumpings $(S_1,\ldots, S_k)$ itself occurs and is maximal in the lexicographic order.  It follows that the matrix for $\mathcal{U}$ is upper triangular with 1's on the diagonal.  It follows that $\mathcal{U}$ is invertible.
	\end{proof}

	One can see that elements in the canonical plate basis are convolution products $\lbrack \pi_1\rbrack \bullet \cdots\bullet \lbrack\pi_k\rbrack$ of characteristic functions of plates $\pi_1,\ldots,\pi_k$ which live in mutually orthogonal subspaces of $V_0^n$; this means that the canonical plate basis consists of Cartesian products of standard plates.
	
	\begin{example}
		With respect to the lexicographic order 
		$$(0,0,0),(0,0,1),(0,1,0),(0,1,1),(0,1,2),(0,2,1),(1,0,0),(1,0,1),$$
		$$(1,0,2),(1,1,0),(1,2,0),(2,0,1),(2,1,0),$$
		via the bijection $\mathcal{C}$, for the basis of characteristic functions of plates we have respectively
		$$\lbrack\lbrack123\rbrack\rbrack,\lbrack\lbrack12,3\rbrack\rbrack,\lbrack\lbrack13,2\rbrack\rbrack,\lbrack\lbrack1,23\rbrack\rbrack,\lbrack\lbrack1,2,3\rbrack\rbrack,\lbrack\lbrack1,3,2\rbrack\rbrack,\lbrack\lbrack23,1\rbrack\rbrack,\lbrack\lbrack2,13\rbrack\rbrack,$$
		$$\lbrack\lbrack2,1,3\rbrack\rbrack,\lbrack\lbrack3,12\rbrack\rbrack,\lbrack\lbrack3,1,2\rbrack\rbrack,\lbrack\lbrack2,3,1\rbrack\rbrack,\lbrack\lbrack3,2,1\rbrack\rbrack$$
		and the map $\mathcal{U}$ takes the form
		$$\left(
		\begin{array}{ccccccccccccc}
		1 & 0 & 0 & 0 & 0 & 0 & -1 & -1 & 0 & -1 & 0 & 0 & 1 \\
		0 & 1 & 0 & 0 & 0 & 0 & 0 & 0 & -1 & 1 & 0 & -1 & -1 \\
		0 & 0 & 1 & 0 & 0 & 0 & 0 & 1 & 0 & 0 & -1 & 0 & -1 \\
		0 & 0 & 0 & 1 & 0 & 0 & 1 & 0 & -1 & 0 & -1 & 0 & -1 \\
		0 & 0 & 0 & 0 & 1 & 0 & 0 & 0 & 1 & 0 & 1 & 1 & 1 \\
		0 & 0 & 0 & 0 & 0 & 1 & 0 & 0 & 1 & 0 & 1 & 0 & 1 \\
		0 & 0 & 0 & 0 & 0 & 0 & 1 & 0 & 0 & 0 & 0 & 0 & -1 \\
		0 & 0 & 0 & 0 & 0 & 0 & 0 & 1 & 0 & 0 & 0 & -1 & -1 \\
		0 & 0 & 0 & 0 & 0 & 0 & 0 & 0 & 1 & 0 & 0 & 1 & 1 \\
		0 & 0 & 0 & 0 & 0 & 0 & 0 & 0 & 0 & 1 & 0 & 0 & -1 \\
		0 & 0 & 0 & 0 & 0 & 0 & 0 & 0 & 0 & 0 & 1 & 0 & 1 \\
		0 & 0 & 0 & 0 & 0 & 0 & 0 & 0 & 0 & 0 & 0 & 1 & 1 \\
		0 & 0 & 0 & 0 & 0 & 0 & 0 & 0 & 0 & 0 & 0 & 0 & 1 \\
		\end{array}
		\right),
		$$
		where columns 7 through 13 label linear combinations of characteristic functions of plates which vanish outside a cone of codimension at least 1.  It is informative to verify (for example, graphically, using inclusion-exclusion, as in Figure \ref{fig:plate-relations-complete}) that the rightmost column, an alternating sum over all 13 plates, encodes the characteristic function of the point $(0,0,0)$, and that column 7 expresses 
		$$\lbrack\lbrack23,1\rbrack\rbrack\mapsto \lbrack\lbrack23,1 \rbrack\rbrack+\lbrack\lbrack1,23 \rbrack\rbrack-\lbrack\lbrack123\rbrack\rbrack = \lbrack\lbrack1 \rbrack\rbrack\bullet \lbrack\lbrack23\rbrack\rbrack.$$
		Finally, column 12 encodes
		$$\lbrack\lbrack2,3,1 \rbrack\rbrack\mapsto\lbrack\lbrack2,3,1 \rbrack\rbrack +\lbrack\lbrack2,1,3\rbrack\rbrack - \lbrack\lbrack2,13 \rbrack\rbrack+\lbrack\lbrack1,2,3\rbrack\rbrack- \lbrack\lbrack12,3\rbrack\rbrack= \lbrack\lbrack1 \rbrack\rbrack\bullet \lbrack\lbrack2,3\rbrack\rbrack,$$
		or, in an order in which it is perhaps easier to see the shuffle-lumping, 
		$$\lbrack\lbrack1 \rbrack\rbrack\bullet \lbrack\lbrack2,3\rbrack\rbrack=\lbrack\lbrack2,3,1 \rbrack\rbrack - \lbrack\lbrack2,13 \rbrack\rbrack +\lbrack\lbrack2,1,3\rbrack\rbrack- \lbrack\lbrack12,3\rbrack\rbrack+\lbrack\lbrack1,2,3\rbrack\rbrack.$$
	\end{example}
	
	The following more involved example will serve to illustrate the upper-triangularity of Theorem \ref{thm:unitriangularMap}.
	\begin{example}
		We have
		$$\mathcal{U}(\lbrack\lbrack 4\ 11,10,3,5\ 7,6\ 8,1\ 9,2\rbrack\rbrack)=\lbrack\lbrack 1\ 9,2\rbrack\rbrack \bullet\lbrack\lbrack 3,5\ 7,6\ 8\rbrack\rbrack\bullet \lbrack\lbrack 4\ 11,10\rbrack\rbrack,$$
		where we omit the (rather long) alternating sum over all shuffle-lumpings of the ordered set partitions
		$$(1\ 9,2), (3, 5\ 7,6\ 8), (4\ 11,10).$$
		It is a useful exercise to apply Lemma \ref{lem:FoataSets} to check that 
		$$\mathcal{C}(\{4,11\},\{10\},\{3\},\{5,7\},\{6,8\},\{1,9\},\{2\})=(5,6,2,0,3,4,3,4,5,1,0)$$ and verify that $(\{4,11\},\{10\},\{3\},\{5,7\},\{6,8\},\{1,9\},\{2\})$ is the shuffle-lumping that is maximal with respect to the lexicographic ordering: switching or merging any two blocks which are not in the same ordered set partition results in a lexicographically smaller ordered set partition.  For example, the shuffle-lumping obtained by merging $\{3\}$ and $\{10\}$, which are in distinct ordered set partitions, obviously decreases the lexicographic order:
		$$\mathcal{C}(\{4,11\},\{3,10\},\{5,7\},\{6,8\},\{1,9\},\{2\})=(4,5,1,0,2,3,2,3,4,1,0),$$
		as does switching $\{3\}$ and $\{10\}$:
		$$\mathcal{C}(\{4,11\},\{3\},\{10\},\{5,7\},\{6,8\},\{1,9\},\{2\})=(5,6,1,0,3,4,3,4,5,2,0).$$
		
	\end{example}

	As a consequence of Theorem \ref{thm:unitriangularMap} we have Corollary \ref{cor:Lineardimensions}.
	
	\begin{cor}\label{cor:Lineardimensions}
		The linear dimension of the degree $k$ component $(\hat{\mathcal{P}}^n)_k$ of the space $\hat{\mathcal{P}}^n$, consisting of linear combinations of characteristic functions of total dimension $k$ Minkowski sums of plates, is equal to the number of standard composite set partitions $\{\mathbf{S}_1,\ldots, \mathbf{S}_k\}$ of $\{1,\ldots, n\}$.  Namely,
		$$\dim((\hat{\mathcal{P}}^n)_k)=\sum_{i=k}^n S(n,i)s(i,k).$$
		Likewise, the linear dimension of the degree $k$ component of the space $\hat{\mathcal{P}}_1^n$, consisting of linear combinations of characteristic functions of \underline{pointed}, total dimension $k$ Minkowski sums of plates, is equal to the $k$th Stirling number of the first kind, 
		$$\dim((\hat{\mathcal{P}}^n_{1})_k)= S(n,n)s(n,k)=s(n,k).$$
		Here $S(n,i)$ is the Stirling number of the second kind, which counts the number of set partitions of $\{1,\ldots, n\}$ into $i$ blocks, and $s(i,k)$ is the Stirling number of the first kind, which counts the number of permutations of $\{1,\ldots, i\}$ which decompose as a product of $k$ disjoint cycles.
	\end{cor}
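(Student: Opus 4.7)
My plan is to combine Theorem \ref{thm:unitriangularMap} with a direct combinatorial enumeration of standard composite set partitions. By that theorem, the canonical basis of $\hat{\mathcal{P}}^n$ consists of the convolutions $\lbrack\lbrack\mathbf{S}_1\rbrack\rbrack\bullet\cdots\bullet\lbrack\lbrack \mathbf{S}_k\rbrack\rbrack$ indexed by standard composite set partitions $\{\mathbf{S}_1,\ldots,\mathbf{S}_k\}$, and the degree-$k$ component $(\hat{\mathcal{P}}^n)_k$ is spanned exactly by those basis elements with $k$ convolution factors. So the proof reduces to counting standard composite set partitions of $\{1,\ldots,n\}$ with precisely $k$ components.

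To enumerate these, I would stratify by the total number of elementary blocks $i := \sum_{j=1}^{k}\operatorname{len}(\mathbf{S}_j)$, which must satisfy $k \le i \le n$. Specifying such a composite set partition amounts to two independent choices: (a) choose an unordered set partition of $\{1,\ldots,n\}$ into $i$ blocks, which contributes the factor $S(n,i)$; and (b) group these $i$ blocks into $k$ unordered linear sequences, each of which begins with the block containing the minimum element of the sequence's support.

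For step (b) I would invoke the classical bijection with permutations of $\{1,\ldots,i\}$ having $k$ cycles: label the $i$ blocks $1,\ldots,i$ by the ranks of their minima, and to each permutation with cycle decomposition associate the composite set partition whose linear sequences are the cycles written starting from their smallest elements. This yields $s(i,k)$ arrangements, and summing over $i$ gives
$$\dim\bigl((\hat{\mathcal{P}}^n)_k\bigr) \;=\; \sum_{i=k}^{n} S(n,i)\,s(i,k).$$

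The statement for $\hat{\mathcal{P}}_1^n$ then follows at once: its canonical basis is the restriction to standard composite set partitions whose elementary blocks are all singletons, forcing $i=n$ and $S(n,n)=1$, which leaves $\dim\bigl((\hat{\mathcal{P}}_1^n)_k\bigr) = s(n,k)$. The only nontrivial point is verifying that the ``standard'' condition (first block of each $\mathbf{S}_j$ contains $\min J_j$) corresponds precisely to the ``cycle-written-from-minimum'' convention used in the permutation bijection; this is immediate but deserves to be stated explicitly, since it is what makes the enumeration clean. Everything else is routine.
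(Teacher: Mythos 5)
Your proposal is correct and follows essentially the same route as the paper: reduce via Theorem \ref{thm:unitriangularMap} to counting standard composite set partitions, stratify by the number $i$ of underlying blocks to get the factor $S(n,i)$, and use the classical ``cycle written from its minimum'' bijection to get $s(i,k)$, with the $\hat{\mathcal{P}}_1^n$ case following from forcing $i=n$. You spell out the minimum-first-versus-cycle bijection more explicitly than the paper does, which is a small but worthwhile clarification.
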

	
	\begin{proof}
		In the formula for $\dim((\hat{\mathcal{P}}^n)_k)$, the contribution $S(n,i)s(i,k)$ is the product of the number of set partitions of $n$ with $i$ blocks, times the number of permutations of $\{1,\ldots, i\}$ which decompose into $k$ disjoint cycles.  This is exactly the enumeration of the standard composite set partitions $\{\mathbf{S}_1,\ldots, \mathbf{S}_k\}$: each $\mathbf{S}_a$ is a standard ordered set partition and 
		$$\bigcup_{S\in \mathbf{S}_1} S\cup\cdots\cup \bigcup_{S\in \mathbf{S}_k} S=\{1,\ldots, n\}.$$
		
		The formula for the dimension of the $k^\text{th}$ graded component $(\hat{\mathcal{P}}_{1}^n)_k$ follows by taking the unique ordered set partition $(\{1\},\ldots, \{n\})$ of $\{1,\ldots, n\}$ and counting the number of permutations of $\{1,\ldots, n\}$ which decompose into $k$ disjoint cycles.
	\end{proof}
	
	\begin{example}
		The formula in Corollary \ref{cor:Lineardimensions} is given in O.E.I.S. A079641, the matrix product of the Stirling numbers of the second kind with the unsigned Stirling numbers of the first kind.  The first six rows are given below; note that the rows sum to the ordered Bell numbers $( 1, 3, 13, 75, 541, 4683)$.
		$$
		\begin{array}{cccccc}
		1&&&&&\\
		2 & 1 &  &  &  &\\
		6 & 6 & 1 &  &  &\\
		26 & 36 & 12 & 1 && \\
		150 & 250 & 120 & 20 & 1 & \\
		1082 & 2040 & 1230 & 300 & 30 & 1 \\
		\end{array}
		$$
		The canonical basis for $\hat{\mathcal{P}}_1^n$ has graded dimension given by the Stirling numbers of the first kind,
		$$
		\begin{array}{cccccc}
		1&&&&&\\
		1 & 1 &  &  &  &\\
		2 & 3 & 1 &  &  &\\
		6 & 11 & 6 & 1 && \\
		24 & 50 & 35 & 10 & 1 & \\
		120 & 274 & 225 & 85 & 15 & 1. \\
		\end{array}
		$$
		
	\end{example}

	\section{Plates and trees}\label{sec:PlatesTrees}

	Let $\{(i_1,j_1),\ldots, (i_{n-1},j_{n-1})\}$ be the set of oriented edges of a directed tree $\mathcal{T}$ on the vertex set $\{1,\ldots, n\}$.  This data encodes a certain permutohedral cone which is also simplicial, given explicitly as the conical hull
	$$\pi_{\mathcal{T}}=\langle e_{i_1}-e_{j_1},\ldots, e_{i_{n-1}}-e_{j_{n-1}}\rangle_+.$$
	
	In Theorem \ref{thm:treeExpansionCompletePlates} we present a combinatorial formula which expands the characteristic function of the permutohedral cone assigned to any oriented tree on $n$ vertices as a signed sum of characteristic functions of plates.  
	
	The proof of Theorem \ref{thm:treeExpansionCompletePlates} follows closely that of Theorem \ref{thm: shuffleLump Identity}.  The idea is to decompose a union of overlapping closed Weyl chambers into a disjoint union of partially open Weyl chambers; then the characteristic function of the disjoint union expands using the formula in Lemma \ref{lem: Weyl chamber decomposition shuffle}.  Then we dualize to get the permutohedral cone $\pi_{\mathcal{T}}$.

	Note that Theorem \ref{thm:treeExpansionCompletePlates} appears to first order in \cite{HeSon} as a shuffle-identity among rational functions.

	\begin{thm}\label{thm:treeExpansionCompletePlates}
		Let $\mathcal{T}=\{(i_1,j_1),\ldots(i_{n-1},j_{n-1})\}$ be a directed tree.  We have, in the space $\hat{\mathcal{P}}^n$, the identity of characteristic functions
		$$\lbrack\langle e_{i_1}-e_{j_1},\ldots, e_{i_{n-1}}-e_{j_{n-1}}\rangle_+\rbrack= \sum_{\pi:p_{i_a} < p_{j_a}}(-1)^{n-\operatorname{len}(\pi)}\lbrack\pi\rbrack,$$
		where we recall that $p_{i_a}<p_{j_a}$ if and only if in the ordered set partition which labels $\pi$, the label $i_a$ is in a block strictly to the left of the block containing $j_a$.
	\end{thm}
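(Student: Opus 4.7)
The plan is to follow the template of the proof of Theorem \ref{thm: shuffleLump Identity}: dualize $\pi_{\mathcal{T}}$, analyze the dual cone as a union of closed Weyl chambers, replace these by partially open Weyl chambers to get a disjoint decomposition, expand each piece via Lemma \ref{lem: Weyl chamber decomposition shuffle}, and dualize back using Remark \ref{rem: properties dual cones}.

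First I would dualize. Since $\pi_{\mathcal{T}}$ is the conical hull of the vectors $e_{i_a} - e_{j_a}$, its dual cone is $\pi_{\mathcal{T}}^\star = \{y \in V_0^n : y_{i_a} \geq y_{j_a} \text{ for all edges } (i_a, j_a)\}$. Let $P_{\mathcal{T}}$ be the poset on $\{1,\ldots,n\}$ generated by the covering relations $i_a <_{P_{\mathcal{T}}} j_a$. Then a closed Weyl chamber $\bar{C}_\tau = \{y_{\tau_1} \geq \cdots \geq y_{\tau_n}\}$ is contained in $\pi_{\mathcal{T}}^\star$ exactly when $\tau$ is a linear extension of $P_{\mathcal{T}}$, so $\pi_{\mathcal{T}}^\star = \bigcup_\tau \bar{C}_\tau$ where the union runs over linear extensions $\tau$ of $P_{\mathcal{T}}$.

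Next, following the argument of Theorem \ref{thm: shuffleLump Identity}, I would pass to the partially open chambers $C_\tau$ of Lemma \ref{lem: Weyl chamber decomposition shuffle} (strict inequality at every descent of $\tau$). By a swapping argument analogous to the one there, a boundary face $y_{\tau_d} = y_{\tau_{d+1}}$ missing from $C_\tau$ at a descent position $d$ whose transposition $\tau'$ is again a linear extension of $P_{\mathcal{T}}$ is recovered by the neighboring $C_{\tau'}$. Expanding each $[C_\tau]$ by Lemma \ref{lem: Weyl chamber decomposition shuffle} and summing yields an expansion of $[\pi_{\mathcal{T}}^\star]$ as a signed sum of dual plate characteristic functions $[\pi^\star]$. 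A plate $\pi = [S_1,\ldots,S_k]$ appears in this sum precisely when the canonical permutation associated to $\pi$ (each block listed in decreasing order, then concatenated) is a linear extension of $P_{\mathcal{T}}$; by propagating edge relations through the tree, this is equivalent to requiring $p_{i_a} < p_{j_a}$ for every edge $(i_a, j_a)$, because any equality $p_{i_a} = p_{j_a}$ together with a chain of edges realizing comparability would trap the edge endpoints in an impossible configuration for a linear extension. Finally, applying the duality involution $\star$, which by Remark \ref{rem: properties dual cones} preserves linear relations among characteristic functions of cones, yields the claimed identity in $\hat{\mathcal{P}}^n$.

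The main obstacle is handling descents of a linear extension whose transposition is forbidden by $P_{\mathcal{T}}$: the standard swapping argument does not cover boundaries at such forced descents, so a careful analysis is needed --- either by showing that the missed boundary faces contribute zero after cancellation with the higher codimension terms arising from the Lemma \ref{lem: Weyl chamber decomposition shuffle} expansions at adjacent chambers, or by refining the partially open chambers to include forced-descent boundaries --- in order to arrive at the precise plate condition $p_{i_a} < p_{j_a}$ for every edge.
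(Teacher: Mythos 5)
Your proposal follows the paper's proof almost step for step: dualize $\pi_{\mathcal{T}}$, recognize the dual as a union of closed Weyl chambers indexed by linear extensions of the tree-order poset, try to make the union disjoint using the partially open chambers $C_\tau$ of Lemma~\ref{lem: Weyl chamber decomposition shuffle}, expand each $[C_\tau]$ by that lemma, and dualize back via Remark~\ref{rem: properties dual cones}. I note that the obstruction you flag is genuine and is in fact glossed over in the paper as well; the paper's own proof, unlike that of Theorem~\ref{thm: shuffleLump Identity}, never verifies that $\bigsqcup_\tau C_\tau$ exhausts the dual cone. To see the issue concretely, take $n=2$ and the single edge $(i_1,j_1)=(2,1)$. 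The only linear extension is $\tau=(2,1)$, whose sole descent is at position $1$, and there it is forced ($\tau_1=2 <_{P_\mathcal{T}} 1=\tau_2$); so $C_{(2,1)}=\{x_2>x_1\}$ is strictly smaller than $\pi_\mathcal{T}^\star=\{x_2\ge x_1\}$, and Lemma~\ref{lem: Weyl chamber decomposition shuffle} gives $[C_{(2,1)}]=[[2,1]]^\star-[[21]]^\star$, whereas the correct answer after dualizing is $[[2,1]]$ alone. The same descent produces, after lumping, the plate $[[21]]$ with $p_1=p_2$, a term that is \emph{not} on the right-hand side of the theorem. So neither of your two tentative fixes (cancellation, or re-including forced-descent boundaries) is carried out, and the first in particular will not work as stated --- there is nothing available in $\sum_\tau[C_\tau]$ to cancel the spurious $-[[21]]^\star$.

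The clean repair, which you should state explicitly, is a preliminary relabeling. Both sides of the claimed identity transform equivariantly under the $\symm_n$ coordinate action: replacing each $i$ by $\sigma(i)$ simultaneously relabels the tree edges, the plates, and the orientations $p_{i_a}<p_{j_a}$. Since a directed tree has no directed cycles, its transitive closure is a partial order on $\{1,\ldots,n\}$ and admits a linear extension, so one may choose $\sigma$ making all edges satisfy $\sigma(i_a)<\sigma(j_a)$. Assume this WLOG. Then for any linear extension $\tau$ and any descent $\tau_d>\tau_{d+1}$, the pair $(\tau_d,\tau_{d+1})$ cannot be a covering relation of $P_\mathcal{T}$: a cover at adjacent positions would force $(\tau_d,\tau_{d+1})=(i_a,j_a)$ and hence $i_a>j_a$, which the relabeling has ruled out. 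Therefore no descent of a linear extension is forced, the swapping argument of Theorem~\ref{thm: shuffleLump Identity} applies verbatim to close up $\bigsqcup_\tau C_\tau$, and moreover lumping at any subset of descents of any linear extension never merges an $i_a$ with its $j_a$, so every plate arising satisfies the strict orientations $p_{i_a}<p_{j_a}$. With that WLOG in place the rest of your outline is correct and matches the paper's intended argument.
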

	
	\begin{proof}
		The dual cone is defined by the equations
		$$\langle e_{i_1}-e_{j_1},\ldots, e_{i_{n-1}}-e_{j_{n-1}}\rangle_+^\star=\left\{y\in V_0^n:y_{i_1}\ge y_{j_1},\ldots, y_{j_{n-1}}\ge y_{j_{n-1}}\right\}.$$
		We first claim that this is a union of those Weyl chambers $\cup_\tau\lbrack\tau\rbrack^\star$ defined by $y_{\tau_1}\ge\cdots\ge y_{\tau_m}$ which satisfy the $n-1$ conditions $\tau_{i_1}>\tau_{j_1},\ \ldots,\ \tau_{i_{n-1}}>\tau_{j_{n-1}}$.  To see this, let $\bar{e}_{I_1},\ldots, \bar{e}_{I_{n-1}}$ be the basis which is orthogonally dual to $e_{i_1}-e_{j_1},\ldots, e_{i_{n-1}}-e_{j_{n-1}}$, so that $\bar{e}_{I_a}\cdot (e_{i_b}-e_{j_b})=\delta_{a,b}$.  Then we have from the corresponding vector space isomorphism a bijection of cone points
		$$\langle e_{i_1}-e_{j_1},\ldots, e_{i_{n-1}}-e_{j_{n-1}}\rangle_+\rightarrow \langle e_{i_1}-e_{j_1},\ldots, e_{i_{n-1}}-e_{j_{n-1}}\rangle_+^\star=\langle\bar{e}_{I_1},\ldots, \bar{e}_{I_{n-1}}\rangle_+$$
		defined by $e_{i_a}-e_{j_a}\mapsto \bar{e}_{I_a}$, that is 
		$$\sum_{a=1}^{n-1}t_a (e_{i_a}-e_{j_a})\mapsto \sum_{a=1}^{n-1}t_a \bar{e}_{I_a}.$$
		Let $\lbrack \alpha_1\rbrack^\star,\ldots, \lbrack \alpha_m\rbrack^\star$ be the minimal set of Weyl chambers such that 
		$$\langle e_{i_1}-e_{j_1},\ldots, e_{i_{n-1}}-e_{j_{n-1}}\rangle_+^\star\subseteq\cup_{i=1}^m\lbrack\alpha_i\rbrack^\star,$$
		which means that the permutations $\alpha_1,\ldots, \alpha_{m}$ are all compatible with the orders $(i_1,j_1),\ldots,$  $(i_{n-1},j_{n-1})$.  We show that this is an equality: for each $y\in \cup_{i=1}^m\lbrack\alpha_i\rbrack^\star$, since $\bar{e}_{I_1},\ldots, \bar{e}_{I_{n-1}}$ is a basis for $V_0^n$ we have $y=\sum_{a=1}^{n-1} t_a \bar{e}_{I_a}$ for some $t_a\in\mathbb{R}$, for equality it suffices to show that $y_{i_a}-y_{j_a}=y\cdot(e_{i_a}-e_{j_a})=t_a\ge 0$ for all $a=1,\ldots, n-1$.  But having $y_{i_a}-y_{j_a}<0$ for some $a$ would imply that $\alpha_i$ is not compatible with the order $(i_1,j_1),\ldots, (i_{n-1},j_{n-1})$.
		
		As in Theorem \ref{thm: shuffleLump Identity}, we replace the Weyl chambers $\lbrack\tau\rbrack^\star$ with the (mutually disjoint) partially open Weyl chambers $C_\tau$ from Lemma \ref{lem: Weyl chamber decomposition shuffle}.  By construction these all satisfy the inequalities defining the dual cone, and we correspondingly have, for characteristic functions,
		$$\lbrack\langle e_{i_1}-e_{j_1},\ldots, e_{i_{n-1}}-e_{j_{n-1}}\rangle_+^\star\rbrack=\sum_\tau\lbrack C_\tau\rbrack,$$
		where the sum is over all permutations $\tau=(\tau_1,\ldots, \tau_n)$ satisfying the $n-1$ conditions $\tau_{i_1}>\tau_{j_1},\ \ldots,\ \tau_{i_{n-1}}>\tau_{j_{n-1}}$.

		But from Lemma \ref{lem: Weyl chamber decomposition shuffle}, for each such $\tau$  we have the further decomposition 
		$$\lbrack C_\tau\rbrack=\sum_\pi(-1)^{n-\operatorname{len}(\pi)}\lbrack\pi^\star\rbrack,$$
		where the sum is over all plates $\pi=\lbrack S_1,\ldots, S_k\rbrack$ which are labeled by ordered set partitions $(S_1,\ldots, S_k)$ such that each block is labeled by a permutation which has the set of consecutive descents of $\tau$, of the form $\tau_{i_1}>\tau_{i_2}>\cdots>\tau_{i_{\vert S_i\vert}}.$  Summing over all such $\tau$ we obtain
		$$\lbrack\langle e_{i_1}-e_{j_1},\ldots, e_{i_{n-1}}-e_{j_{n-1}}\rangle_+\rbrack= \sum_{\pi:p_{i_a} < p_{j_a}}(-1)^{n-\operatorname{len}(\pi)}\lbrack\pi\rbrack,$$
		which completes the proof.
		
	\end{proof}

	\begin{example}\label{Example: tree relation}
		Let $\mathcal{T}=\langle e_1-e_2,e_1-e_3\rangle_+$.  Then
		$$\lbrack \langle e_1-e_2,e_1-e_3\rangle_+\rbrack = \lbrack\lbrack 1,2,3\rbrack\rbrack+\lbrack\lbrack 1,3,2\rbrack\rbrack-\lbrack\lbrack 1,23\rbrack\rbrack.$$
	\end{example}
	\begin{figure}[h!]
		\centering
		\includegraphics[width=.7\linewidth]{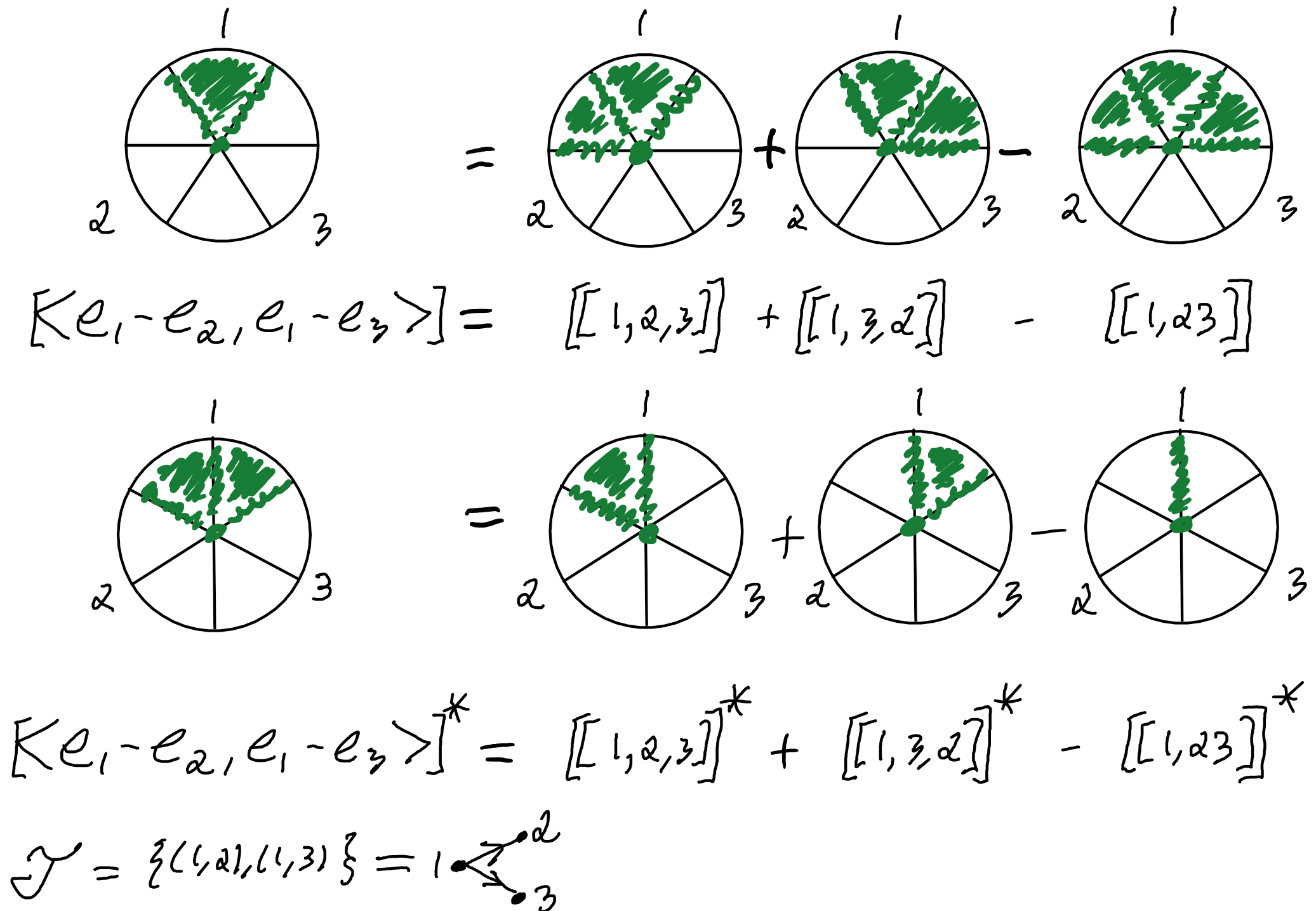}
		\caption{The relation for Example \ref{Example: tree relation} and its dual}
		\label{fig:exampletreeplaterelation}
	\end{figure}

\begin{cor}\label{cor:treeSetpartition}
	Let $\mathcal{T}= \{(i_1,j_1),\ldots, (i_{k-1},j_{k-1})\}$ be a directed tree, where $i_a,j_a\in\{1,\ldots, k\}$ with $i_a\not=j_a$.  Let $\{S_1,\ldots, S_k\}$ be a collection of disjoint nonempty subsets of $\{1,\ldots, n\}$.  We have
	$$\lbrack\lbrack S_{i_1},S_{j_1}\rbrack\rbrack\bullet\cdots\bullet \lbrack\lbrack S_{i_{k-1}},S_{j_{k-1}}\rbrack\rbrack=\sum_{\pi:p_{i_a} < p_{j_a}}(-1)^{k-\operatorname{len}(\pi)}\lbrack\pi\rbrack.$$
\end{cor}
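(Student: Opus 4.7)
The plan is to follow the template of the proof of Theorem \ref{thm:treeExpansionCompletePlates}, with each singleton $\{c\}\subset \{1,\ldots,k\}$ thickened to the subset $S_c\subset \{1,\ldots,n\}$; the argument essentially combines that proof with the embedding-based thickening used in Theorem \ref{thm: shuffleLump Identity}.

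First I would use Equation \eqref{eqn: lumped plate conical hull} to write the Minkowski sum $M := \sum_{a=1}^{k-1}\lbrack S_{i_a},S_{j_a}\rbrack$ as a conical hull of root vectors $e_a-e_b$: since $\mathcal{T}$ spans $\{1,\ldots,k\}$, every block $S_c$ appears at an endpoint of some edge, so $M$ is generated by all $e_a-e_b$ with $a,b$ in a common block $S_c$, together with those with $(a,b)\in S_{i_r}\times S_{j_r}$ for some edge $r$. Dualizing, I obtain $M^\star$ as the set of $y\in V_0^n$ which are constant on each $S_c$, with common values $t_c$ satisfying $t_{i_r}\ge t_{j_r}$ for every edge. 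Invoking Proposition \ref{prop: dual higher codimension plate}, this $M^\star$ decomposes as the union of the closed codimension-$(n-k)$ faces $\lbrack S_{\sigma_1},\ldots,S_{\sigma_k}\rbrack^\star$ as $\sigma$ ranges over permutations of $\{1,\ldots,k\}$ for which $i_r$ precedes $j_r$ for every edge.

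From here the argument proceeds just as in Theorem \ref{thm:treeExpansionCompletePlates}: replace each closed face $\lbrack S_{\sigma_1},\ldots,S_{\sigma_k}\rbrack^\star$ with the partially open face $\iota(C_\sigma)$ coming from Corollary \ref{cor: Higher codimension Weyl chamber decomposition shuffle} (with $\iota:V_0^k\hookrightarrow V_0^n$ the embedding sending $e_c$ to an appropriate multiple of $e_{S_c}$), so that the resulting union becomes disjoint; verify that this disjoint union is topologically closed and hence coincides with $M^\star$; and then expand each $\lbrack \iota(C_\sigma)\rbrack$ via the alternating sum formula of that corollary. Summing over all compatible $\sigma$ and dualizing back using the $\star$-involution from Remark \ref{rem: properties dual cones}, and using that $\iota(\pi^\star)$ is the $\star$-dual of the plate obtained from $\pi$ by inflating each label $c\in\{1,\ldots,k\}$ to the block $S_c$, produces the stated identity.

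The main obstacle will be the topological closedness verification: I must show that if a point of $M^\star$ lies on a missing codimension-one boundary face of some $\iota(C_\sigma)$, then the adjacent permutation $\sigma'$ obtained by swapping the descent pair in $\sigma$ is still compatible with the tree orientations $p_{i_a}<p_{j_a}$, so that the point is already captured by $\iota(C_{\sigma'})$. This is exactly the closedness argument from the proof of Theorem \ref{thm: shuffleLump Identity}, transcribed to the tree setting where the compatibility constraints are indexed by edges of $\mathcal{T}$ rather than by the component permutations of a shuffle.
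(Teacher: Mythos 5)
Your overall strategy — delegate to the proof of Theorem~\ref{thm:treeExpansionCompletePlates}, with each singleton $\{c\}$ thickened to the block $S_c$ via the embedding $\iota$ — is exactly what the paper does; its proof of the corollary is literally the one-line remark that Theorem~\ref{thm:treeExpansionCompletePlates} generalizes with minimal adjustment. So in spirit your plan matches the source.

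However, the closedness verification that you correctly flag as ``the main obstacle'' does \emph{not} simply transcribe from Theorem~\ref{thm: shuffleLump Identity}, and as stated it fails. In that theorem the component permutations $\sigma_1,\ldots,\sigma_l$ are increasing in the \emph{natural order} on $\{1,\ldots,m\}$, which is exactly what guarantees that a descent pair $\tau_d>\tau_{d+1}$ never straddles a single $\sigma_i$; swapping it therefore preserves shufflehood. For a directed tree $\mathcal{T}$, the orientations $p_{i_a}<p_{j_a}$ have no a priori relation to the natural order, and a descent pair $\tau_d>\tau_{d+1}$ can perfectly well be a tree edge $(\tau_d,\tau_{d+1})$, in which case swapping it destroys compatibility. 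Concretely, take $k=3$, $\mathcal{T}=\{(2,1),(1,3)\}$, and $S_c=\{c\}$. The only compatible $\tau$ is $(2,1,3)$, yet $C_{(2,1,3)}=\{y_2>y_1\ge y_3\}$ misses the face $y_2=y_1>y_3$, which lies in $M^\star=\{y_2\ge y_1\ge y_3\}$ but in $C_{(1,2,3)}$ — and $(1,2,3)$ is \emph{not} compatible. So the claimed equality $\lbrack M^\star\rbrack=\sum_{\tau\,\text{compat.}}\lbrack C_\tau\rbrack$ is false in the partially open decomposition taken with respect to $<$.

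The fix is small but must be made explicit: before running the argument, relabel $\{1,\ldots,k\}$ (equivalently, replace the natural order in Lemma~\ref{lem: Weyl chamber decomposition shuffle} by a topological sort $\prec$ of $\mathcal{T}$), so that $i\prec j$ whenever $(i,j)$ is an edge. Then every $\prec$-descent of a compatible $\tau$ necessarily involves a non-edge pair, the swap remains compatible, and your closedness and closed-form expansion go through verbatim. Since both the statement of the corollary and the right-hand side are manifestly independent of the labelling, this relabelling is harmless. Without this adjustment the inclusion-exclusion sum in your penultimate step produces extra lumped terms (e.g.\ $-\lbrack\lbrack 21,3\rbrack\rbrack$ in the example above) that have $p_{i_a}=p_{j_a}$ and should not appear in the final sum over $\pi$ with strict orientations $p_{i_a}<p_{j_a}$.
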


	
	\begin{proof}
		The proof of Theorem \ref{thm:treeExpansionCompletePlates} generalizes with minimal adjustment to the present case, when $\{1,2,\ldots, n\}$ is replaced by any collection of disjoint nonempty subsets $(S_1,\ldots, S_k)$.
	\end{proof}

	\section{Straightening plates to the canonical basis}\label{sec:straightening}

	We prove in Theorem \ref{thm:Straightening} the general expression for the expansion of a plate in the canonical basis for $\hat{\mathcal{P}}^n$.  This implies the result of Ocneanu's original computation of the \textit{plate relations} in which he worked in a vector space generated formally by rooted binary trees.  Note that in practice one often works in one of the quotient spaces $\mathcal{P}^n$, $\hat{\mathcal{P}}_1^n$ or $\mathcal{P}_1^n$, see Corollary \ref{cor:plate relations quotient}.
	
	Recall that if $\{\mathbf{T}_1,\ldots, \mathbf{T}_m\}$ is a composite set partition of $\{1,\ldots, n\}$, then we say that $p_i<p_j$ if either of the following two conditions hold:
	\begin{enumerate}
		\item $i$ and $j$ are in the same ordered set partition $\mathcal{T}_a$ with $p_i<p_j$, or
		\item $i$ and $j$ are in different ordered set partitions $\mathbf{T}_a$ and $\mathbf{T}_b$, respectively, with $a\not=b$.
	\end{enumerate}
	Further, we say that $p_i=p_j$ if $i$ and $j$ are in the same block of the same ordered set partition.

\begin{thm}\label{thm:Straightening}
	Let $(S_l,S_{l-1},\ldots, S_1,S_{l+1},\ldots, S_k)$ be an ordered set partition of $\{1,\ldots, n\}$.  We have
	\begin{eqnarray*}
		&&\lbrack\lbrack S_l,S_{l-1},\ldots, S_1,S_{l+1},\ldots, S_k\rbrack\rbrack \\
		& = & \sum_{m=1}^k\sum_{\{\mathbf{T}_1,\ldots,\mathbf{T}_m\}}\sum_{\pi}(-1)^{c}(-1)^{(\text{len}(\mathbf{T}_1)+(k-l))-\text{len}(\pi)}\lbrack\lbrack\mathbf{T}_2\rbrack\rbrack \bullet\cdots\bullet \lbrack\lbrack\mathbf{T}_m\rbrack\rbrack\bullet\lbrack\pi\rbrack,
	\end{eqnarray*}
	where the sum $\sum_\pi$ varies over all ordered set partitions of 
	$$\left(\bigcup_{U\in \mathbf{T}_1} U\right) \cup S_{l+1}\cup\cdots\cup S_k$$
	satisfying the orientations 
	$$p_{(S_l)}\ge p_{(S_{l-1})}\ge \cdots\ge p_{(S_1)}<p_{(S_{l+1})}<\cdots<p_{(S_k)},$$
	using the symbol $p_{(S)}$ in place of $(p_{i_1}=\cdots =p_{i_{\vert S\vert}})$ whenever $S=\{i_1,\ldots, i_{\vert S\vert}\}\subseteq\{1,\ldots,n\}$.
	The outer sum is over all composite set partitions $\{\mathbf{T}_1,\ldots,\mathbf{T}_m\}$ of $\cup_{i=1}^l S_i$ satisfying the same orientations 
	$$p_{(S_l)}\ge p_{(S_{l-1})}\ge \cdots\ge p_{(S_1)}<p_{(S_{l+1})}<\cdots<p_{(S_k)}.$$
	  Here $c=\vert \{i\in \{1,\ldots, l-1\}: p_i<p_{i+1}\}\vert$ counts the pairs $(S_i,S_{i+1})$ such that $S_i$ and $S_{i+1}$ are in the same ordered set partition $\mathbf{T}_a$ with the (strict) orientation $p_i<p_{i+1}$.	
\end{thm}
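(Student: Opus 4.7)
The plan is to verify the identity by expanding every convolution product on the RHS into a signed sum of plates via Theorem \ref{thm: shuffleLump Identity}, then establishing via combinatorial cancellation that all contributions collapse to the single plate $\lbrack\pi_0\rbrack = \lbrack\lbrack S_l,S_{l-1},\ldots,S_1,S_{l+1},\ldots,S_k\rbrack\rbrack$.

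Concretely, I would first apply Theorem \ref{thm: shuffleLump Identity} to each candidate canonical basis element $\lbrack\lbrack\mathbf{T}_2\rbrack\rbrack\bullet\cdots\bullet\lbrack\lbrack\mathbf{T}_m\rbrack\rbrack\bullet\lbrack\pi\rbrack$ on the RHS. This turns the RHS into a triply-indexed sum over composite set partitions $\{\mathbf{T}_1,\ldots,\mathbf{T}_m\}$ of $\bigcup_{i=1}^l S_i$, ordered set partitions $\pi$ of $\bigl(\bigcup_{U\in\mathbf{T}_1}U\bigr)\cup S_{l+1}\cup\cdots\cup S_k$ satisfying the stated orientations, and shuffle-lumpings $\pi'$ of the mutually disjoint OSPs $\mathbf{T}_2,\ldots,\mathbf{T}_m,\pi$, weighted by three interacting sign factors: $(-1)^c$ and $(-1)^{(\operatorname{len}(\mathbf{T}_1)+(k-l))-\operatorname{len}(\pi)}$ from the formula itself, and the secondary shuffle-lumping sign $(-1)^{m'-\operatorname{len}(\pi')}$ from the expansion.

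I would then construct a sign-reversing involution $\iota$ on the triples that do not produce $\pi_0$. A natural candidate $\iota$ acts by a local move: either merging or splitting a pair of adjacent blocks of $\pi'$ arising from distinct components $\mathbf{T}_a,\mathbf{T}_b$, or transferring a single block between $\mathbf{T}_1$ and some $\mathbf{T}_b$ with $b\ge 2$. Each such move flips exactly one of the three sign factors while preserving the underlying plate $\pi'$, and the orientation constraints are exactly what is required to force $\pi_0$ to be the unique fixed point. Alternatively, one could proceed by induction on $l$: the base case $l=1$ is immediate since $\lbrack\lbrack S_1,\ldots,S_k\rbrack\rbrack$ is already canonical, corresponding to the singleton composite $\{(S_1,\ldots,S_k)\}$. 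The inductive step applies the genericity identity $\lbrack\lbrack A,B\rbrack\rbrack + \lbrack\lbrack B,A\rbrack\rbrack = \lbrack\lbrack A\cup B\rbrack\rbrack + \lbrack\lbrack A,B\rbrack\rbrack\cdot\lbrack\lbrack B,A\rbrack\rbrack$ from Section \ref{sec:Landscape} to the leftmost pair $(S_l,S_{l-1})$, producing three correction terms matching the three ways $S_{l-1},S_l$ may appear in a composite set partition---merged in a block, in different blocks of the same $\mathbf{T}_a$, or in different components.

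The main obstacle is the sign bookkeeping. The involution requires that the three interacting sign factors be coordinated so each local move flips their product, which demands careful identification of a canonical move on each non-fixed triple. The inductive approach instead requires verifying that the new composite set partitions appearing at depth $l$ receive the correct $(-1)^c$ weight from the genericity identity's correction terms, and in particular that the ``$p_{i}<p_{i+1}$ strict'' contributions track the extra factor of $-1$ that arises when $S_l$ and $S_{l-1}$ pass from ``merged'' to ``distinct blocks within the same $\mathbf{T}_a$''. Either way, since Theorem \ref{thm:unitriangularMap} guarantees $\mathcal{U}$ is unitriangular, the content of the formula is an explicit combinatorial expression for $\mathcal{U}^{-1}(\lbrack\pi_0\rbrack)$ that must be shown to correctly invert $\mathcal{U}$.
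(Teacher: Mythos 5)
Your proposal assembles the right ingredients but neither of the two sketched attacks is carried through, and one structural point is glossed over.

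Your second alternative (induction on $l$ via the genericity identity $\lbrack\lbrack S,S'\rbrack\rbrack = \lbrack\lbrack S'S \rbrack\rbrack - \lbrack\lbrack S',S\rbrack\rbrack +\lbrack\lbrack S\rbrack\rbrack\bullet \lbrack\lbrack S'\rbrack\rbrack$) is in essence the paper's method, just packaged as induction rather than as a simultaneous distribution over all $l-1$ descending links of $\lbrack\lbrack S_l,\ldots,S_1\rbrack\rbrack=\lbrack\lbrack S_l,S_{l-1}\rbrack\rbrack\bullet\cdots\bullet\lbrack\lbrack S_2,S_1\rbrack\rbrack$. That part of your plan is sound, and the reorganization of the $3^{l-1}$ terms into composite set partitions $\{\mathbf{T}_1,\ldots,\mathbf{T}_m\}$ weighted by $(-1)^c$ goes through as you anticipate. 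What you have not addressed is the second half of the computation. After the component $\mathbf{T}_1$ containing $S_1$ has been isolated, one must still expand the convolution $\lbrack\lbrack\mathbf{T}_1\rbrack\rbrack\bullet\lbrack\lbrack S_1,S_{l+1},\ldots,S_k\rbrack\rbrack$ as a signed sum of plates. This is \emph{not} a shuffle-lumping in the sense of Theorem \ref{thm: shuffleLump Identity}, because the two factors overlap in the block $S_1$; it is a tree-type product whose expansion is Corollary \ref{cor:treeSetpartition} (the set-block version of Theorem \ref{thm:treeExpansionCompletePlates}). That step is exactly where the exponent $(\operatorname{len}(\mathbf{T}_1)+(k-l))-\operatorname{len}(\pi)$ and the inner sum over $\pi$ with the stated orientations come from, and it does not appear in your proposal at all.

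Your first alternative, expanding the whole right-hand side via Theorem \ref{thm: shuffleLump Identity} and then constructing a sign-reversing involution on the resulting triples, is a legitimate ambition but you have not constructed the involution, and the coordination of the three interacting sign weights that you flag as ``the main obstacle'' is precisely the content that is missing. Until the canonical move is specified and shown to be both an involution and strictly sign-reversing on all non-fixed triples, this is a plan, not a proof. The remark that the Theorem amounts to an explicit formula for $\mathcal{U}^{-1}(\lbrack\pi_0\rbrack)$ and that uniqueness would follow from Theorem \ref{thm:unitriangularMap} is correct, but it identifies what the formula must accomplish without establishing that it does.
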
	
	
\begin{proof}
	Let us use the notation 
	\begin{itemize}
		\item $\lbrack\lbrack S,S'\rbrack\rbrack^{(c_1)}=\lbrack\lbrack S' S\rbrack\rbrack$,
		\item $\lbrack\lbrack S,S'\rbrack\rbrack^{(c_2)}=\lbrack\lbrack S', S\rbrack\rbrack$,
		\item $\lbrack\lbrack S,S'\rbrack\rbrack^{(c_3)}=\lbrack\lbrack S\rbrack\rbrack\bullet \lbrack\lbrack S'\rbrack\rbrack$.
	\end{itemize}
	Due to the relation	
$$\lbrack\lbrack S,S'\rbrack\rbrack = \lbrack\lbrack S'S\rbrack\rbrack - \lbrack\lbrack S',S\rbrack\rbrack +\lbrack\lbrack S\rbrack\rbrack\bullet \lbrack\lbrack S'\rbrack\rbrack =\lbrack\lbrack S,S'\rbrack\rbrack^{(c_1)}-\lbrack\lbrack S,S'\rbrack\rbrack^{(c_2)}+\lbrack\lbrack S,S'\rbrack\rbrack^{(c_3)} ,$$
where $S,S'$ are nontrivial disjoint subsets of $\{1,\ldots, n\}$, as well as
$$\lbrack\lbrack S_l,S_{l-1},\ldots, S_1\rbrack\rbrack = \lbrack\lbrack S_l,S_{l-1}\rbrack\rbrack\bullet \lbrack\lbrack S_{l-1},S_{l-2}\rbrack\rbrack \bullet\cdots\bullet \lbrack\lbrack S_{2},S_1\rbrack \rbrack,$$
we have
\begin{eqnarray*}
	& & \lbrack\lbrack S_l,S_{l-1},\ldots,S_1\rbrack\rbrack\\
	& = & \left(\lbrack\lbrack S_{1}S_{2}\rbrack\rbrack-\lbrack\lbrack S_{1},S_{2}\rbrack\rbrack +\lbrack\lbrack S_1\rbrack\rbrack\bullet \lbrack\lbrack S_2\rbrack\rbrack\right)\bullet\cdots\bullet \left(\lbrack\lbrack S_{l-1}S_{l}\rbrack\rbrack-\lbrack\lbrack S_{l-1},S_{l}\rbrack\rbrack +\lbrack\lbrack S_{l-1}\rbrack\rbrack\bullet \lbrack\lbrack S_{l}\rbrack\rbrack\right)\\
	& = & \sum_{w\in \{c_1,c_2,c_3\}^{l-1}}(-1)^{\#_{c_2}(w)}\lbrack\lbrack S_1,S_2\rbrack\rbrack^{(w_1)}\bullet \lbrack\lbrack S_2,S_3\rbrack\rbrack^{(w_2)}\bullet\cdots\bullet \lbrack\lbrack S_{l-1},S_l\rbrack\rbrack^{(w_{l-1})}\\
	& = & \sum_{m=1}^{k}\sum_{\{\mathbf{T}_1,\ldots,\mathbf{T}_m\}}(-1)^{c}\lbrack\lbrack \mathbf{T}_1\rbrack\rbrack\bullet\cdots\bullet \lbrack\lbrack\mathbf{T}_m\rbrack\rbrack,
\end{eqnarray*}
where the sum is over all composite ordered set partitions $\{\mathbf{T}_1,\ldots, \mathbf{T}_m\}$ of $S_1\cup\cdots\cup S_l\subseteq\{1,\ldots, n\}$, for $m\in \{1,\ldots, l\}$, satisfying 
	$$p_{(S_l)}\ge p_{(S_{l-1})}\ge \cdots\ge p_{(S_1)}.$$
	Here $\#_{c_2}(w)$ counts the number of times that $c_2$ appears in $w\in \{c_1,c_2,c_3\}^{l-1}$, and 
	$$c=\vert \{i\in \{1,\ldots, l-1\}: p_i<p_{i+1}\}\vert,$$
	hence $\#_{c_2}(w) = c$ whenever
	$$\lbrack\lbrack S_1,S_2\rbrack\rbrack^{(w_1)}\bullet \lbrack\lbrack S_2,S_3\rbrack\rbrack^{(w_2)}\bullet\cdots\bullet \lbrack\lbrack S_{l-1},S_l\rbrack\rbrack^{(w_{l-1})}= \lbrack\lbrack \mathbf{T}_1\rbrack\rbrack\bullet\cdots\bullet \lbrack\lbrack\mathbf{T}_m\rbrack\rbrack.$$
Multiplying by $\lbrack\lbrack S_1,S_{l+1},\ldots, S_k\rbrack\rbrack$ we have
$$\lbrack\lbrack S_l,S_{l-1},\ldots,S_1\rbrack\rbrack\bullet  \lbrack\lbrack S_1,S_{l+1},\ldots,S_k\rbrack\rbrack=\lbrack\lbrack S_{l},S_{l-1},\ldots, S_1,S_{l+1},\ldots, S_k\rbrack\rbrack,$$
hence 
\begin{eqnarray*}
	&& \lbrack\lbrack S_{l},S_{l-1},\ldots, S_1,S_{l+1},\ldots, S_k\rbrack\rbrack \\
	& = &  \sum_{m=1}^{k}\sum_{\{\mathbf{T}_1,\ldots,\mathbf{T}_m\}}(-1)^{c}\lbrack\lbrack \mathbf{T}_1\rbrack\rbrack\bullet\cdots\bullet \lbrack\lbrack\mathbf{T}_m\rbrack\rbrack\bullet \lbrack\lbrack S_1,S_{l+1},\ldots, S_k\rbrack\rbrack\\
	& = &  \sum_{m=1}^{k}\sum_{\{\mathbf{T}_1,\ldots,\mathbf{T}_m\}}(-1)^c\lbrack\lbrack \mathbf{T}_2\rbrack\rbrack\bullet\cdots\bullet \lbrack\lbrack\mathbf{T}_m\rbrack\rbrack\bullet \left(\lbrack\lbrack\mathbf{T}_1\rbrack\rbrack\bullet \lbrack\lbrack S_1,S_{l+1},\ldots, S_k\rbrack\rbrack \right),
\end{eqnarray*}
where in the last line we have assumed that, if we denote $\mathbf{T}_1=(U_1,\ldots, U_t)$, then $S_1\subseteq U_1$.  It remains to expand the characteristic function of the permutohedral cone specified by the (directed tree) data
$$\{(p_{(U_1)}<p_{(U_2)}),\ldots, (p_{(U_{t-1})}<p_{(U_t)}),(p_{(U_1)}<p_{(S_{l+1})}),(p_{(S_{l+1})}<p_{(S_{l+2})})\ldots,(p_{(S_{k-1})}<p_{(S_{k})})\}$$
%
in terms of the canonical plate basis.  Now by Corollary \ref{cor:treeSetpartition} we have
\begin{eqnarray*}
	\left(\lbrack\lbrack\mathbf{T}_1\rbrack\rbrack\bullet \lbrack\lbrack S_1,S_{l+1},\ldots, S_k\rbrack\rbrack\right) & = & \sum_{\pi}(-1)^{(\text{len}(\mathbf{T}_1)+(k-l+1)-1)-\text{len}(\pi)}\lbrack\pi\rbrack\\
	& = & \sum_{\pi}(-1)^{(\text{len}(\mathbf{T}_1)+(k-l))-\text{len}(\pi)}\lbrack\pi\rbrack\\
\end{eqnarray*}
where the sum is over all ordered set partitions satisfying the orientations specified by the tree; note that $S_1$ must be in the first block of each $\pi$, that is all $\pi$ which appear in the sum are \textit{standard} ordered set partitions.  It follows that
$$\lbrack\lbrack S_{l},S_{l-1},\ldots, S_1,S_{l+1},\ldots, S_k\rbrack\rbrack$$
$$=\sum_{m=1}^{k}\sum_{\{\mathbf{T}_1,\ldots,\mathbf{T}_m\}}(-1)^c\lbrack\lbrack \mathbf{T}_2\rbrack\rbrack\bullet\cdots\bullet \lbrack\lbrack\mathbf{T}_m\rbrack\rbrack\bullet\left(\sum_{\pi}(-1)^{(\text{len}(\mathbf{T}_1)+(k-l))-\text{len}(\pi)}\lbrack\pi\rbrack\right)$$
$$=\sum_{m=1}^k\sum_{\{\mathbf{T}_1,\ldots,\mathbf{T}_m\}}\sum_{\pi}(-1)^{c}(-1)^{(\text{len}(\mathbf{T}_1)+(k-l))-\text{len}(\pi)}\lbrack\lbrack\mathbf{T}_2\rbrack\rbrack \bullet\cdots\bullet \lbrack\lbrack\mathbf{T}_m\rbrack\rbrack\bullet\lbrack\pi\rbrack.$$
\end{proof}

\begin{example}
	In Figure \ref{fig:exampleplaterelationsnice3} we illustrate an expansion of the characteristic function $\lbrack\lbrack 2,1,3\rbrack\rbrack$ in the canonical plate basis.
\end{example}
\begin{figure}[h!]
	\centering
	\includegraphics[width=.9\linewidth]{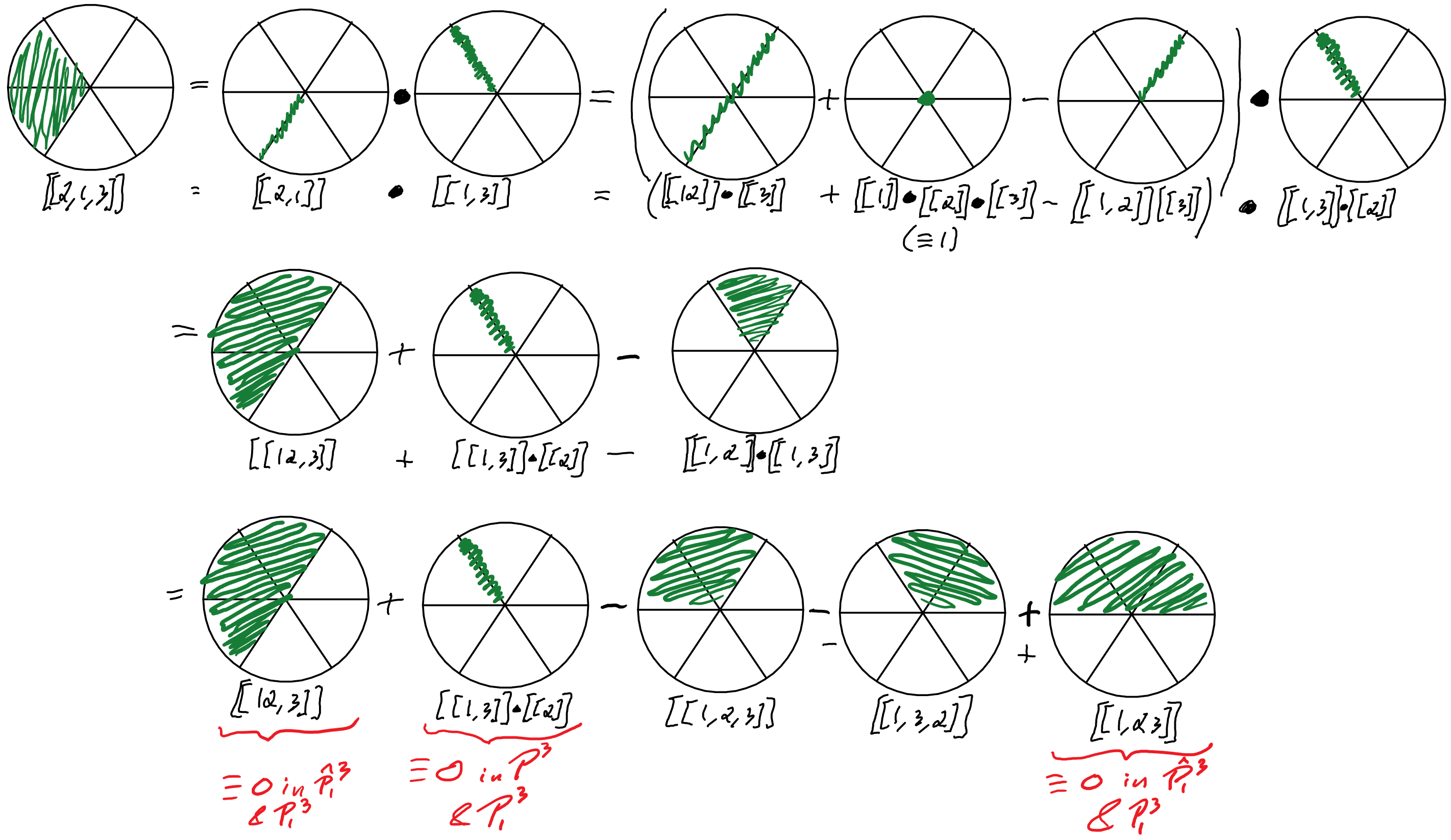}
	\caption{Some computations in the convolution algebra of permutohedral cones: straightening to the canonical plate basis}
	\label{fig:exampleplaterelationsnice3}
\end{figure}
\begin{example}
	Let us indicate the key steps in straightening $\lbrack\lbrack 3,2,1,4,5\rbrack\rbrack$ to the canonical basis.  We have
\begin{eqnarray*}
 & & \lbrack\lbrack 3,2,1,4,5\rbrack\rbrack\\
 & = &  \left(\lbrack\lbrack 23\rbrack\rbrack -\lbrack\lbrack 2,3\rbrack\rbrack +1 \right)\bullet \left(\lbrack\lbrack 12\rbrack\rbrack -\lbrack\lbrack 1,2\rbrack\rbrack +1 \right)\bullet\lbrack\lbrack 1,4,5\rbrack\rbrack\\
& = & \left(\lbrack\lbrack 123\rbrack\rbrack - \lbrack\lbrack 1,23\rbrack\rbrack +\lbrack\lbrack 23\rbrack\rbrack - \lbrack\lbrack 12,3\rbrack\rbrack + \lbrack\lbrack 1,2,3\rbrack\rbrack -\lbrack\lbrack 2,3\rbrack\rbrack + \lbrack\lbrack 12\rbrack\rbrack - \lbrack\lbrack 1,2\rbrack\rbrack+1\right)\bullet \lbrack\lbrack 1,4,5\rbrack\rbrack.
\end{eqnarray*}
These separate into two groups, one of which is already in the canonical basis and one which requires further work.  First, we have
\begin{eqnarray*}
\lbrack\lbrack 23\rbrack\rbrack\bullet\lbrack\lbrack 1,4,5\rbrack\rbrack - \lbrack\lbrack 2,3\rbrack\rbrack\bullet\lbrack\lbrack 1,4,5\rbrack\rbrack+\lbrack\lbrack 1,4,5\rbrack\rbrack.
\end{eqnarray*}
These are already in the canonical basis.

More work is required for the remainder,
\begin{eqnarray*}
\left(\lbrack\lbrack 123\rbrack\rbrack - \lbrack\lbrack 1,23\rbrack\rbrack - \lbrack\lbrack 12,3\rbrack\rbrack + \lbrack\lbrack 1,2,3\rbrack\rbrack  + \lbrack\lbrack 12\rbrack\rbrack - \lbrack\lbrack 1,2\rbrack\rbrack\right)\bullet \lbrack\lbrack 1,4,5\rbrack\rbrack.
\end{eqnarray*}
Let us consider some term-by-term computations.  We have
\begin{eqnarray*}
\lbrack\lbrack 123\rbrack\rbrack\bullet \lbrack\lbrack 1,4,5\rbrack\rbrack & = & \lbrack\lbrack 123,4,5\rbrack\rbrack\\
\lbrack\lbrack 1,23\rbrack\rbrack\bullet\lbrack\lbrack 1,4,5\rbrack\rbrack & = & \lbrack\lbrack 1,23,4,5\rbrack\rbrack - \lbrack\lbrack 1,234,5\rbrack\rbrack +\lbrack\lbrack 1,4,23,5\rbrack\rbrack -\lbrack\lbrack 1,4,235\rbrack\rbrack +\lbrack\lbrack 1,4,5,23\rbrack\rbrack \\
\lbrack\lbrack 1,2\rbrack\rbrack \bullet \lbrack\lbrack 1,4,5\rbrack\rbrack & = & \lbrack\lbrack 1,2,4,5\rbrack\rbrack - \lbrack\lbrack 1,24,5\rbrack\rbrack + \lbrack\lbrack 1,4,2,5\rbrack\rbrack-\lbrack\lbrack 1,4,25\rbrack\rbrack +\lbrack\lbrack 1,4,5,2\rbrack\rbrack,
\end{eqnarray*}
where we remark that in the last line the singlet $\lbrack\lbrack 3\rbrack\rbrack$ has been removed from the composite ordered set partitions.  It acts trivially via the convolution product, being the characteristic function of the point $(0,\ldots, 0)\in V_0^n$.  In full detail, the last line would be 
$$ -\lbrack\lbrack 1,2,4,5\rbrack\rbrack\bullet\lbrack\lbrack 3\rbrack\rbrack + \lbrack\lbrack 1,24,5\rbrack\rbrack\bullet\lbrack\lbrack 3\rbrack\rbrack - \lbrack\lbrack 1,4,2,5\rbrack\rbrack\bullet\lbrack\lbrack 3\rbrack\rbrack\\
+  \lbrack\lbrack 1,4,25\rbrack\rbrack\bullet\lbrack\lbrack 3\rbrack\rbrack-\lbrack\lbrack 1,4,5,2\rbrack\rbrack\bullet\lbrack\lbrack 3\rbrack\rbrack.$$
Note that in each term which appears in present expansion of $\lbrack\lbrack 3,2,1,4,5\rbrack\rbrack$ in the canonical basis we have $p_3\ge p_2\ge p_1<p_4<p_5$.  Note that that some orientations may hold trivially, for example in $\lbrack\lbrack 1,4,5,2\rbrack\rbrack\bullet\lbrack\lbrack 3\rbrack\rbrack,$ the orientation $ p_3\ge p_2$ is trivially satisfied.
\end{example}

Unfortunately the formula given in Theorem \ref{thm:Straightening} is somewhat inconvenient; however, in the quotient spaces $\mathcal{P}^n$, $\hat{\mathcal{P}}_1^n$ and $\mathcal{P}_1^n$ the expressions are much more manageable.

\begin{cor}\label{cor:straightening1}
	Let $\lbrack\lbrack S_l,S_{l-1},\ldots, S_1,S_{l+1},\ldots, S_k\rbrack\rbrack\in\hat{\mathcal{P}}^n$, where $(S_l,S_{l-1},\ldots,S_1,S_{l+1},\ldots,  S_k)$ is an ordered set partition of $\{1,\ldots, n\}$.  Then, passing to the quotient $\mathcal{P}^n$ we have
	\begin{eqnarray*}
		\lbrack\lbrack S_l,S_{l-1},\ldots, S_1,S_{l+1},\ldots, S_k\rbrack\rbrack
		& = & \sum_{\pi}(-1)^{k-l-1-\operatorname{len}(\pi)}\lbrack \pi\rbrack,
	\end{eqnarray*}
	where the sum is over all plates $\pi$ labeled by ordered set partitions satisfying the orientations
	$$p_{(S_l)}\ge p_{(S_{l-1})}\ge \cdots\ge p_{(S_1)}<p_{(S_{l+1})}<\cdots<p_{(S_k)}.$$
\end{cor}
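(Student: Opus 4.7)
The plan is to derive Corollary \ref{cor:straightening1} directly from Theorem \ref{thm:Straightening} by passing along the quotient $\hat{\mathcal{P}}^n\twoheadrightarrow\mathcal{P}^n$. Since $\mathcal{P}^n$ is obtained by killing linear combinations of characteristic functions that vanish on the complement of a measure-zero set in $V_0^n$, any characteristic function of a cone of codimension at least one in $V_0^n$ becomes zero. A summand $\lbrack\lbrack\mathbf{T}_2\rbrack\rbrack\bullet\cdots\bullet\lbrack\lbrack\mathbf{T}_m\rbrack\rbrack\bullet\lbrack\pi\rbrack$ from Theorem \ref{thm:Straightening} is a Minkowski sum of cones living in the mutually orthogonal subspaces indexed by the pieces $J_a=\bigcup_{S\in\mathbf{T}_a}S$ of the composite set partition; its total dimension is $\sum_a(|J_a|-1)=n-m$, which equals $n-1$ precisely when $m=1$. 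Therefore every contribution with $m\geq 2$ vanishes in $\mathcal{P}^n$, and only the $m=1$ terms remain.

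For the surviving $m=1$ terms, $\mathbf{T}_1$ is an ordered set partition of $\bigcup_{i=1}^l S_i$ compatible with $p_{(S_l)}\geq\cdots\geq p_{(S_1)}$, and $\pi$ ranges over the ordered set partitions produced in the proof of Theorem \ref{thm:Straightening} via Corollary \ref{cor:treeSetpartition}. Writing $t=\operatorname{len}(\mathbf{T}_1)$, I would compute that the number $c$ of consecutive pairs $(S_i,S_{i+1})$ that are strictly separated inside $\mathbf{T}_1$ is exactly $t-1$, since $\mathbf{T}_1$ performs $l-t$ merges among the $l-1$ consecutive pairs. The combined sign factor then reduces to
\[(-1)^{t-1}\cdot(-1)^{(t+(k-l))-\operatorname{len}(\pi)}=(-1)^{2t-1+(k-l)-\operatorname{len}(\pi)}=(-1)^{k-l-1-\operatorname{len}(\pi)},\]
matching the sign in the target formula.

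The last step is to collapse the remaining double sum over $(\mathbf{T}_1,\pi)$ into a single sum over $\pi$ satisfying $p_{(S_l)}\geq\cdots\geq p_{(S_1)}<p_{(S_{l+1})}<\cdots<p_{(S_k)}$. The key observation is that for such a $\pi$, the ordered set partition $\mathbf{T}_1$ is uniquely recovered by declaring its blocks $U_a$ to be the maximal subsets of $\{S_1,\ldots,S_l\}$ whose members share a common $\pi$-block, with their order forced by the weak chain (so that $S_1\in U_1$). One then verifies that this $\mathbf{T}_1$ is precisely the one that produces $\pi$ in Corollary \ref{cor:treeSetpartition}: the $U_a$'s land in distinct $\pi$-blocks by construction, $U_1$ strictly precedes $S_{l+1}$ by the weak chain, and $S_{l+1},\ldots,S_k$ are in strict ascending order. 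The map $(\mathbf{T}_1,\pi)\mapsto\pi$ is a bijection, and the double sum collapses to the claimed single sum.

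The main subtlety will be keeping the orientation bookkeeping straight: Theorem \ref{thm:Straightening} encodes orientations both on $\mathbf{T}_1$ (weak $\geq$ among the $S_i$ for $i\leq l$) and via the hidden tree underlying its inner invocation of Corollary \ref{cor:treeSetpartition} (strict $<$ among the merged blocks $U_a$ and the $S_j$ for $j>l$). The technical heart is verifying that these two layers combine cleanly into the uniform chain of the statement and that $\mathbf{T}_1$ is unambiguously reconstructed from $\pi$.
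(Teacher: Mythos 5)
Your proposal is correct and follows the same route as the paper: kill higher-codimension summands (those with $m\geq 2$), work out the sign for $m=1$, and identify the surviving index set with the plates in the target formula. The paper's own proof is actually terser on the last point (it just asserts that the $m=1$ terms "match the expression above except for the sign") while you spell out the bijection $\pi\mapsto \mathbf{T}_1$ explicitly; this is supplementary detail, not a different method. Your sign bookkeeping ($c=t-1$, then the exponent reduces mod $2$) agrees with the paper's equivalent argument that $\operatorname{len}(\mathbf{T}_1)=c+1$ because a word in $\{c_1,c_2\}^{l-1}$ with $c$ occurrences of $c_2$ yields a plate with $c+1$ blocks.

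One small imprecision in your dimension count: you write the total dimension as $\sum_a(|J_a|-1)$ with $J_a=\bigcup_{S\in\mathbf{T}_a}S$, but the composite set partition $\{\mathbf{T}_1,\ldots,\mathbf{T}_m\}$ is only of $\bigcup_{i=1}^l S_i$, so the last factor $\lbrack\pi\rbrack$ actually lives in $V_0^{J_1\cup S_{l+1}\cup\cdots\cup S_k}$ rather than $V_0^{J_1}$. The correct count is $\bigl(|J_1\cup S_{l+1}\cup\cdots\cup S_k|-1\bigr) + \sum_{a=2}^m\bigl(|J_a|-1\bigr) = n-m$, which gives the identical conclusion; only the intermediate formula needs adjusting.
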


\begin{proof}
	In $\mathcal{P}^n$, in the formula in Theorem \ref{thm:Straightening} only composite ordered set partitions $\{\mathbf{T}_1,\ldots, \mathbf{T}_m\}$ having $m=1$ contribute, matching the expression above except for the sign.  
	
	From Theorem \ref{thm:Straightening} the sign is $(-1)^{c}(-1)^{(\text{len}(\mathbf{T}_1)+(k-l))-\text{len}(\pi)}$.  We claim that for the quotient $\mathcal{P}^n$, the number $c+\text{len}(\mathbf{T}_1)$ is always odd.  To see this, recall that in $\mathcal{P}^n$, we have 
	$$\lbrack\lbrack S_2,S_1\rbrack\rbrack = \lbrack\lbrack S_1S_2\rbrack\rbrack - \lbrack\lbrack S_1,S_2\rbrack\rbrack,$$ 
	hence 
	$$\lbrack\lbrack S_l,S_{l-1}\rbrack\rbrack\bullet \cdots\bullet \lbrack\lbrack S_2,S_1\rbrack\rbrack =  (\lbrack\lbrack S_{1}S_2\rbrack\rbrack - \lbrack\lbrack S_1,S_2\rbrack\rbrack)\bullet\cdots\bullet (\lbrack\lbrack S_{l-1}S_{l}\rbrack\rbrack - \lbrack\lbrack S_{l-1},S_{l}\rbrack\rbrack).$$ 
	Let $\mathbf{T}_1$ be any term in the expansion; now $c$ counts the number of times the two-block terms $\lbrack\lbrack S_i,S_{i+1}\rbrack\rbrack$ appear in a product.  But it is easy to see that a convolution product of $c$ of the two-block factors and $c'$ of the one-block factors is a plate $\lbrack \mathbf{T}_1\rbrack$ with $\text{len}(\mathbf{T}_1)=c+1$ blocks, hence $c+\text{len}(\mathbf{T}_1) = c+(c+1) = 2c+1$, and so
	$$(-1)^{c}(-1)^{(\text{len}(\mathbf{T}_1)+(k-l))-\text{len}(\pi)}=(-1)^{k-l-1-\text{len}(\pi)}.$$
\end{proof}

\begin{example}
	Consider the characteristic function $\lbrack\lbrack 2,1,3\rbrack\rbrack$.  Here $l=2$ and $k=3$.
	\begin{eqnarray*}
	\lbrack\lbrack 2,1,3\rbrack\rbrack & = & \left((-1)^{c_1}\lbrack\lbrack 12\rbrack\rbrack +(-1)^{c_2}\lbrack\lbrack 1,2\rbrack\rbrack \right)\bullet \lbrack\lbrack 1,3\rbrack\rbrack\\
	& = & (-1)^{c_1}\lbrack\lbrack 12\rbrack\rbrack \bullet  \lbrack\lbrack 1,3\rbrack\rbrack + (-1)^{c_2}\lbrack\lbrack 1,2\rbrack\rbrack \bullet  \lbrack\lbrack 1,3\rbrack\rbrack\\
	& = & (-1)^{c_1}(-1)^{d_1}\lbrack\lbrack 12,3\rbrack\rbrack + (-1)^{c_2}((-1)^{d_2}\lbrack\lbrack 1,2,3\rbrack\rbrack + (-1)^{d_3}\lbrack\lbrack 1,3,2\rbrack\rbrack + (-1)^{d_4}\lbrack\lbrack 1,23\rbrack\rbrack),
	\end{eqnarray*}
where $c_1=0$ and $c_2=1$ count the respective occurrences of $p_1<p_2$.  Further, each $d_i$ is of the form
$$d_i\sim(\text{len}(\mathbf{T}_1)+(k-l))-\text{len}(\pi).$$
That is,
$$(-1)^{d_1} = (-1)^{(1+(3-2))-2} = 1,$$
$$(-1)^{d_2} = (-1)^{(2 + (3-2))-3}=1,$$
$$(-1)^{d_3} = (-1)^{(2 + (3-2))-3}=1,$$
$$(-1)^{d_4} = (-1)^{(2 + (3-2))-2}=-1.$$
For example, for the $(-1)^{d_4}$ coefficient we have $\lbrack \mathbf{T}_1\rbrack = \lbrack 1,2\rbrack$ and $\pi = \lbrack 1,23\rbrack$.

Wrapping things up, in $\mathcal{P}^3$ we have
$$\lbrack\lbrack 2,1,3\rbrack\rbrack = \lbrack\lbrack 12,3\rbrack\rbrack +\lbrack\lbrack 1,23\rbrack\rbrack - \lbrack\lbrack 1,2,3\rbrack\rbrack-\lbrack\lbrack 1,3,2\rbrack\rbrack.$$
\end{example}

Straightening relations for the remaining quotient spaces are given in Corollary \ref{cor:plate relations quotient}.

\begin{cor}\label{cor:plate relations quotient}
	Let $(i_l,i_{l-1},\ldots, i_1,i_{l+1},\ldots, i_n)$ be an ordered set partition of $\{1,\ldots, n\}$ where all blocks\ are singlets.
	
	Then in $\hat{\mathcal{P}}_1^n$ we have
	$$\lbrack\lbrack i_l,i_{l-1},\ldots,i_1,i_{l+1},\ldots, i_n\rbrack\rbrack=
	\sum_{m=1}^{n}\sum_{\{\mathbf{T}_1,\ldots,\mathbf{T}_m\}}(-1)^{l-m}\lbrack\lbrack \mathbf{T}_1\rbrack\rbrack\bullet\cdots\bullet \lbrack\lbrack\mathbf{T}_m\rbrack\rbrack,$$
	where the sum is over all (standard) composite set partitions satisfying the orientations
	$$p_{i_l}> p_{i_{l-1}}> \cdots> p_{i_1}<p_{i_{l+1}}<\cdots<p_{i_n}.$$
	Note that all blocks in the composite set partitions above are singlets.
	
	In $\mathcal{P}_1^n$ we have
	\begin{eqnarray*}
		\lbrack\lbrack i_l,i_{l-1},\ldots,i_1,i_{l+1},\ldots, i_n\rbrack\rbrack
		& = & (-1)^{l-1}\sum_{\pi}\lbrack\pi\rbrack,
	\end{eqnarray*}	
	where the sum is over all (standard) ordered set partitions satisfying the orientations
	$$p_{i_l}> p_{i_{l-1}}> \cdots> p_{i_1}<p_{i_{l+1}}<\cdots<p_{i_n}.$$
	Note that the blocks in the ordered set partitions are in this case singlets.
\end{cor}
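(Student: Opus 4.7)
Both halves of the corollary will be obtained as direct specializations of Theorem~\ref{thm:Straightening} (for $\hat{\mathcal{P}}_1^n$) followed by a further quotient (for $\mathcal{P}_1^n$), in analogy with how Corollary~\ref{cor:straightening1} is deduced for $\mathcal{P}^n$. My first step is to rerun the proof of Theorem~\ref{thm:Straightening} with all $S_i$ specialized to singletons, tracking which terms are killed by the relations defining $\hat{\mathcal{P}}_1^n$. Recall from the Genericity subsection that passing to $\hat{\mathcal{P}}_1^n$ (for singletons $S_1=\{a\}$, $S_2=\{b\}$) reduces the three-term identity $\lbrack\lbrack S,S'\rbrack\rbrack = \lbrack\lbrack S,S'\rbrack\rbrack^{(c_1)} - \lbrack\lbrack S,S'\rbrack\rbrack^{(c_2)} + \lbrack\lbrack S,S'\rbrack\rbrack^{(c_3)}$ to just two terms, because the one-block plate $\lbrack\lbrack S_1\cup S_2\rbrack\rbrack$ is a line and therefore vanishes. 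Thus in the product expansion at the heart of Theorem~\ref{thm:Straightening}, the word $w\in\{c_1,c_2,c_3\}^{l-1}$ collapses to $w\in\{c_2,c_3\}^{l-1}$.

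Next I would identify each surviving word $w$ with a standard composite ordered set partition of $\{i_1,\dots,i_l\}$ whose blocks are all singletons and which satisfies the descending orientation $p_{i_l}>p_{i_{l-1}}>\cdots>p_{i_1}$: the positions where $w_j=c_2$ keep $i_j$ and $i_{j+1}$ inside the same piece (with $i_j$ to the left), while the positions where $w_j=c_3$ open a new piece of the composite. If $m$ denotes the number of pieces then $\#_{c_3}(w)=m-1$, hence $\#_{c_2}(w)=l-m$ and the collected sign from Theorem~\ref{thm:Straightening} reduces to $(-1)^{l-m}$. Multiplying through by the ascending tail $\lbrack\lbrack S_1,S_{l+1},\ldots,S_k\rbrack\rbrack$ and using Corollary~\ref{cor:treeSetpartition} to expand the convolution of the piece of the composite containing $i_1$ with this tail into standard composite data on $\{1,\dots,n\}$, all plates with a multi-element block are killed in $\hat{\mathcal{P}}_1^n$; what remains are exactly the standard composite set partitions $\{\mathbf{T}_1,\ldots,\mathbf{T}_m\}$ of $\{1,\dots,n\}$ all of whose blocks are singletons and which satisfy the combined orientation $p_{i_l}>\cdots>p_{i_1}<p_{i_{l+1}}<\cdots<p_{i_n}$. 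This yields the first formula.

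For the $\mathcal{P}_1^n$ formula I would then apply the second quotient, which kills every characteristic function supported on a proper subspace of $V_0^n$. Among the summands above, only those where the composite $\{\mathbf{T}_1,\dots,\mathbf{T}_m\}$ has $m=1$ (that is, a single standard OSP $\pi$ on all of $\{1,\dots,n\}$) give a top-dimensional plate in $V_0^n$; all other convolutions $\lbrack\lbrack\mathbf{T}_1\rbrack\rbrack\bullet\cdots\bullet\lbrack\lbrack\mathbf{T}_m\rbrack\rbrack$ with $m\ge 2$ factor through mutually orthogonal lower-dimensional subspaces and hence vanish in $\mathcal{P}_1^n$. The global sign is then $(-1)^{l-1}$ and the formula reduces to $(-1)^{l-1}\sum_\pi\lbrack\pi\rbrack$ summed over standard singleton-block OSPs $\pi$ of $\{1,\dots,n\}$ satisfying the stated chain of orientations.

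The main obstacle I anticipate is the sign and combinatorial bookkeeping in the step where the ascending tail $\lbrack\lbrack S_1,S_{l+1},\dots,S_k\rbrack\rbrack$ is absorbed into the piece of the composite containing $i_1$ via Corollary~\ref{cor:treeSetpartition}: one must verify that the three sign factors $(-1)^c$, $(-1)^{(\mathrm{len}(\mathbf{T}_1)+(k-l))-\mathrm{len}(\pi)}$ arising from Theorem~\ref{thm:Straightening}, together with the sign $(-1)^{k-\mathrm{len}(\pi)}$ from the tree expansion, collapse to the clean $(-1)^{l-m}$ after all non-pointed contributions are set to zero. I expect this to parallel closely the parity analysis at the end of the proof of Corollary~\ref{cor:straightening1}, where the similar-looking $(-1)^{c}(-1)^{\mathrm{len}(\mathbf{T}_1)+(k-l)-\mathrm{len}(\pi)}$ was shown to simplify to $(-1)^{k-l-1-\mathrm{len}(\pi)}$; the present case is parallel but simpler because only singleton-block composites survive.
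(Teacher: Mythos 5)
Your proposal is correct and follows the routing the paper itself intends: the paper's (omitted) proof of Corollary~\ref{cor:plate relations quotient} is stated to mirror Corollary~\ref{cor:straightening1}, and your argument executes exactly that specialization of Theorem~\ref{thm:Straightening}. In $\hat{\mathcal{P}}_1^n$ the lump option $c_1$ dies (singleton unions give a line), so $w\in\{c_2,c_3\}^{l-1}$ with $\#_{c_3}(w)=m-1$, $\#_{c_2}(w)=l-m$, and since surviving plates have all singlet blocks the second sign factor $(-1)^{(\operatorname{len}(\mathbf{T}_1)+(k-l))-\operatorname{len}(\pi)}$ is already $+1$, giving the clean $(-1)^{l-m}$, and passing further to $\mathcal{P}_1^n$ keeps only $m=1$ with sign $(-1)^{l-1}$. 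The worry you raise at the end about an \emph{extra} factor $(-1)^{k-\operatorname{len}(\pi)}$ ``from the tree expansion'' is moot: that sign is not a separate contribution but precisely the $(-1)^{(\operatorname{len}(\mathbf{T}_1)+(k-l))-\operatorname{len}(\pi)}$ already present in Theorem~\ref{thm:Straightening}, so there is no double-counting to reconcile.
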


\begin{proof}
	The proof is similar to that for Corollary \ref{cor:straightening1} and is omitted.
\end{proof}

	\section{Acknowledgements}
	We gratefully acknowledge the hospitality of the Munich Institute for Astro- and Particle Physics (MIAPP) during the program on Mathematics and Physics of Scattering Amplitudes in August, 2017, as well the Institute for Advanced Study, where parts of this paper were written.
	
	We thank Adrian Ocneanu for the many intensive discussions during our graduate work.  We thank Nima Arkani-Hamed, Freddy Cachazo, Lance Dixon, Song He, Florent Hivert, Chrysostomos Kalousios, Carlos Mafra, Jean-Christophe Novelli, Alex Postnikov and Oliver Schlotterer for very interesting related discussions at various stages of the development of the paper.  We are grateful to Darij Grinberg and Victor Reiner for proof-reading and helpful conversations.

	\newpage
	
	\appendix

	\section{Functional representations}\label{section:ExamplesPhysics}
	\subsection{Laplace transforms}
	
	Below we observe that the formula of Theorem \ref{thm:treeExpansionCompletePlates} carries over to rational function representations, in a similar way to the discussion in \cite{LinearExtensionsValuation}, thus giving several extremely useful functional representations of the spaces $\hat{\mathcal{P}}^n, \mathcal{P}^n,\hat{\mathcal{P}}_1^n$, and $\mathcal{P}_1^n$.  
	
	Following the well-known construction, see for example \cite{ReinerValuationCone}, any permutohedral cone
	$$C=\langle v_1,\ldots, v_k\rangle_+ \subset V_0^n$$ which, after scaling the $v_i$ by a common factor is contained in an integer lattice $L$, maps, via the \textit{discrete Laplace transform}, to the rational function
	$$\mathcal{L}_d(C)=\sum_{v\in C\cap L} e^{-\langle y,v\rangle}.$$
	
	We shall also need the \textit{integral Laplace transform}, which we define by
	$$\mathcal{L}_\mathcal{I}(C)=\int_C e^{-\langle y,v\rangle }dv.$$

	\begin{defn}\label{defn:functionalRepresentation}
		Let $(S_1,\ldots, S_k)$ be an ordered set partition.  Let $\mathcal{W}^n\subset V_0^n$ be the lattice of weights of type $A_{n-1}$.  We define the functional representation of \textit{type} $\mathcal{P}^n$ of a plate $\pi$ to be the composition of the duality operator $\star$ with the valuation (as in \cite{BarvinokPammersheim}) induced on characteristic functions by the well-known discrete Laplace transform:
		$$\lbrack \lbrack S_1,\ldots, S_k\rbrack\rbrack\mapsto \mathcal{L}_d\left(\lbrack \lbrack S_1,\ldots, S_k\rbrack\rbrack^\star\right)=\mathcal{L}_d\left(\lbrack\langle \bar{e}_{S_1},\bar{e}_{S_1\cup S_2},\ldots, \bar{e}_{S_1\cup\cdots\cup S_{k-1}}\rangle\rbrack \right)= \prod_{i=1}^{k-1}\frac{1}{1-e^{-\sum_{j=1}^i\bar{y}_{S_j}}},$$
		for each plate $\lbrack S_1,\ldots, S_k\rbrack\subseteq V_0^n$, where 
		$$\bar{y}_{S}=y_S-\left(\frac{\vert S\vert}{n}\sum_{i=1}^n y_i\right)=\frac{\vert S^c\vert}{n}y_S-\frac{\vert S\vert}{n}y_{S^c}.$$
		
		For any plate $\pi=\lbrack i_1,\ldots, i_n\rbrack$ whose blocks are all singlets, we define the functional representation of \textit{type} $\hat{\mathcal{P}}_1^n$ of a plate $\pi=\lbrack i_1,\ldots, i_n\rbrack$ as
		$$\lbrack\lbrack i_1,\ldots, i_n\rbrack\rbrack\mapsto \mathcal{L}_d\left(\lbrack\lbrack i_1,\ldots, i_n\rbrack\rbrack\right)=\prod_{i=1}^{n-1}\frac{1}{1-e^{-(y_i- y_{i+1})}}=(-1)^{n-1}\frac{x_2\cdots x_n}{(x_1-x_2)\cdots (x_{n-1}-x_n)},$$
		where we define $x_i=e^{-y_i}$.
		
		Finally, for any plate $\pi=\lbrack i_1,\ldots, i_n\rbrack$ whose blocks are again all singlets, we define the functional representation of \textit{type} $\mathcal{P}_1^n$ of a plate $\pi=\lbrack i_1,\ldots, i_n\rbrack$ as
		$$\lbrack\lbrack i_1,\ldots, i_n\rbrack\rbrack\mapsto \mathcal{L}_\mathcal{I}\left(\lbrack\lbrack i_1,\ldots, i_n\rbrack\rbrack\right)=\prod_{i=1}^{n-1}\frac{1}{y_i-y_{i+1}}.$$
		
	\end{defn}
	
	For the general theory of functional representations of polyhedra cones relevant to Proposition \ref{prop:functionalRepsWellDefined}, we refer to the reader to \cite{Barvinok,BarvinokPammersheim,ReinerValuationCone}.
	\begin{prop}\label{prop:functionalRepsWellDefined}
		The functional representations from  Definition \ref{defn:functionalRepresentation}, of types $\mathcal{P}^n$, $\hat{\mathcal{P}}_1^n$ and $\mathcal{P}^n$, are well-defined.
	\end{prop}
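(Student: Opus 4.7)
The plan is to assemble three standard ingredients from the theory of valuations and Laplace transforms on polyhedral cones, as developed in \cite{Barvinok,BarvinokPammersheim,ReinerValuationCone}: (i) both $\mathcal{L}_d$ and $\mathcal{L}_\mathcal{I}$ extend to linear valuations on the $\mathbb{C}$-span of characteristic functions of rational polyhedral cones; (ii) the dualization $\star$ is a linear operator on that span, as recorded in Remark \ref{rem: properties dual cones}; and (iii) both Laplace transforms annihilate characteristic functions of cones that contain an affine line, and $\mathcal{L}_\mathcal{I}$ additionally annihilates lower-dimensional cones. Combining (i) and (ii) already makes $\mathcal{L}_d\circ\star$, $\mathcal{L}_d$, and $\mathcal{L}_\mathcal{I}$ well-defined linear maps out of $\hat{\mathcal{P}}^n$; the work is then to verify that each factors through the intended quotient.

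For the type $\mathcal{P}^n$ representation, a relation modded out of $\hat{\mathcal{P}}^n$ is a linear combination $\sum_i c_i [\pi_i]$ that vanishes outside a measure-zero subset of $V_0^n$, equivalently a combination equal pointwise to a signed sum $\sum_j d_j [\sigma_j]$ of characteristic functions of lower-dimensional cones $\sigma_j\subset V_0^n$. Applying $\star$, which is linear by (ii), and invoking the contrapositive of Remark \ref{rem: properties of dual cones} (2) — a cone with empty relative interior has a non-pointed dual — each $\sigma_j^\star$ contains a line, so by (iii) we conclude $\sum_i c_i \,\mathcal{L}_d([\pi_i^\star]) = \sum_j d_j \,\mathcal{L}_d([\sigma_j^\star]) = 0$, and the map descends. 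For the type $\hat{\mathcal{P}}_1^n$ representation, the additional relations are exactly the characteristic functions of non-pointed plates $[S_1,\ldots,S_k]$, i.e.\ those with some block $S_j$ of size $\geq 2$, for in that case the plate contains both roots $e_a-e_b$ and $e_b-e_a$ with $a,b\in S_j$; ingredient (iii) directly annihilates each such plate under $\mathcal{L}_d$. For type $\mathcal{P}_1^n$, both families of relations are imposed simultaneously, and $\mathcal{L}_\mathcal{I}$ annihilates lower-dimensional cones (their Lebesgue integral is zero) as well as non-pointed cones (by the standard regularization, $\int_\mathbb{R} e^{-t\alpha}\,dt \equiv 0$ as a rational function of $\alpha$), so the map descends here as well.

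The main technical point to justify is the "zero as a rational function" claim in ingredient (iii). For $\mathcal{L}_d$, the geometric sum along a lattice line $\mathbb{Z}d$ contained in a non-pointed cone continues meromorphically to
\[
\frac{1}{1-e^{-\langle y,d\rangle}}+\frac{e^{-\langle y,d\rangle}}{1-e^{-\langle y,d\rangle}}-1 \;=\; 0,
\]
once one accounts for the double-count at the origin; the analogous calculation for $\mathcal{L}_\mathcal{I}$ is immediate. This is precisely the content of the Brion-type polarity formulas packaged in \cite{BarvinokPammersheim}, and once it is invoked the entire proposition is a short combination of the valuation property of the Laplace transforms with the linearity and duality-compatibility of $\star$ recalled in Remark \ref{rem: properties dual cones}.
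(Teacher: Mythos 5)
Your argument follows the paper's proof almost verbatim: factor each map through its quotient by combining the valuation property of $\mathcal{L}_d$, $\mathcal{L}_\mathcal{I}$, the linearity of $\star$, and the facts from \cite{Barvinok,BarvinokPammersheim,ReinerValuationCone} that non-pointed cones are annihilated by both Laplace transforms while lower-dimensional cones are additionally annihilated by $\mathcal{L}_\mathcal{I}$. One small imprecision: the step ``a lower-dimensional $C$ has non-pointed dual $C^\star$'' is not literally the contrapositive of Remark~\ref{rem: properties of dual cones}(2) as you write -- it requires composing item (2), applied to $C^\star$, with the double-duality of item (3) -- but this is a cosmetic gap and the content is correct.
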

	\begin{proof}
		
		According to \cite{Barvinok,BarvinokPammersheim}, see also Proposition 2.2 and respectively Proposition 2.1 in \cite{ReinerValuationCone}, under the linear map induced by the discrete Laplace transform from characteristic functions of polyhedral cones  to rational functions, characteristic functions of non-pointed cones are mapped to zero, and in the same way the linear map induced by the integral Laplace transform sends characteristic functions of both non-pointed cones \textit{and} higher codimension cones to the zero rational function.  
		
		Recall that the functional representation of type $\mathcal{P}^n$ is the composition of $\star$ with the discrete Laplace transform.  That is, $\mathcal{L}_d(\lbrack C^\star\rbrack )=0$ if and only if $C^\star $ is not pointed.  Now $C^\star$ is not pointed if and only if $C$ is of higher codimension, by Remark \ref{rem: properties dual cones}.  But $\mathcal{P}^n$ was defined to be the quotient of $\hat{\mathcal{P}}^n$ by the linear span of the characteristic functions higher codimensional cones, showing that the functional representation of type $\mathcal{P}^n$ is indeed a (well-defined) linear homomorphism.
		
		Recall that $\hat{\mathcal{P}}_1^n$ was defined to be the quotient of $\hat{\mathcal{P}}^n$ by the linear span of the characteristic functions of all plates which have some block of size at least 2.  Suppose $\pi=\lbrack S_1,\ldots, S_k\rbrack$ and we have $\vert S_i\vert\ge 2$ for some $i$.  Then $\pi$ contains both directions $e_a-e_b$ and $e_b-e_a$ for $a,b\in S_i$ with $a\not=b$, meaning that $\mathcal{L}_d(\lbrack\pi\rbrack)=0$.  This completes the proof that the functional representation of type $\hat{\mathcal{P}}_1^n$ is well-defined.
		
		Finally, well-definedness for the functional representation of type $\mathcal{P}_1^n$ follows again by simply noting that $\mathcal{P}_1^n$ is the quotient of $\hat{\mathcal{P}}^n$ with spanning set those characteristic functions of plates which are labeled by ordered set partitions having a block of size at least 2, or which are of higher codimension.
	\end{proof}

	\begin{rem}
		Sometimes it is convenient for computational insight to change variables in our representation of type $\mathcal{P}^n$.  Put $x_i=e^{-\bar{y}_i}$; note that $x_1\cdots x_n=e^0=1$.  Then the functional representation of type $\mathcal{P}^n$ looks like
		\begin{eqnarray}\label{eqn:functional rep}
			\lbrack \lbrack S_1,\ldots, S_k\rbrack\rbrack\mapsto \prod_{i=1}^{k-1}\frac{1}{1-\prod_{j\in S_1\cup\cdots\cup S_i}x_{j}},
		\end{eqnarray}
		and in computations one is forced to consider the relation $x_1\cdots x_n=1$ on a case-by-case basis.  Then the series expansion of \eqref{eqn:functional rep} converges on the plate $\lbrack S_1,\ldots, S_k\rbrack$, in the $\bar{y}_i$ variables.
	\end{rem}

	\subsection{Examples: Laplace transforms of permutohedral cones for tree graphs}

We now illustrate the signed sum expansions of the characteristic function of the permutohedral cone associated to a tree, for the functional representations of types $\mathcal{P}^n, \hat{\mathcal{P}}_1^n$ and $\mathcal{P}_1^n$.

A tree simplex may be cut out using inequalities by taking the orthogonal dual in $V_0^n$ to the root basis $\{e_{i_a}-e_{j_a}:a=1,\ldots, n-1\}$, as specified by the tree.

\begin{prop}\label{prop: LaplaceTransformTree}
	Let $\mathcal{T}$ be a tree with oriented edge set 
	$$\{(i_1,j_1),\ldots, (i_{n-1},j_{n-1})\}.$$
	This means that $B_\mathcal{T}=\{ e_{i_1}-e_{j_1},\ldots, e_{i_{n-1}}-e_{j_{n-1}}\}$ is a basis for $V_0^n$ and the conical hull 
	$$\langle e_{i_1}-e_{j_1},\ldots, e_{i_{n-1}}-e_{j_{n-1}}\rangle_+$$
	is a simplicial cone in $V_0^n$.  Let $\{\bar{e}_{I_1},\ldots, \bar{e}_{I_{n-1}}\}$ be the basis of $V_0^n$ which is orthogonal to $B_{\mathcal{T}}$, for subsets $I_k\subset\{1,\ldots, n\}$.  Then, we have
	$$\langle e_{i_1}-e_{j_1},\ldots, e_{i_{n-1}}-e_{j_{n-1}}\rangle_+=\left\{x\in V_0^n:\bar{e}_{I_1}\cdot x\ge 0,\ldots, \bar{e}_{I_{n-1}}\cdot x\ge 0\right\}.$$
\end{prop}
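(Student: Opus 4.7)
The plan is to exploit the fact that the $(n-1)$-element set $B_\mathcal{T}=\{e_{i_a}-e_{j_a}\}$ is a basis for $V_0^n$, so the situation is the simplest possible one: a simplicial cone together with the dual basis of its spanning vectors. First I would briefly justify the basis claim by observing that a directed tree on $n$ vertices has exactly $n-1$ edges, and the corresponding root vectors are linearly independent (this is the standard fact that a graph $G$ with edge vectors $\{e_i-e_j:(i,j)\in E(G)\}$ spanning a rank $|V|-1$ subspace of $V_0^n$ iff $G$ is connected; for a tree both conditions hold). Consequently the orthogonally dual basis $\{\bar{e}_{I_1},\ldots,\bar{e}_{I_{n-1}}\}$ satisfying $\bar{e}_{I_a}\cdot(e_{i_b}-e_{j_b})=\delta_{a,b}$ exists and is uniquely determined.

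Next I would prove the set equality by double inclusion. For the inclusion $\subseteq$, let $x=\sum_{a=1}^{n-1}c_a(e_{i_a}-e_{j_a})$ with $c_a\ge 0$. Then for each $k$,
$$\bar{e}_{I_k}\cdot x=\sum_{a=1}^{n-1}c_a\,\bar{e}_{I_k}\cdot(e_{i_a}-e_{j_a})=\sum_{a=1}^{n-1}c_a\delta_{k,a}=c_k\ge 0,$$
so $x$ lies in the right-hand side. For the inclusion $\supseteq$, if $x\in V_0^n$ satisfies $\bar{e}_{I_k}\cdot x\ge 0$ for all $k$, then because $B_\mathcal{T}$ is a basis we may write $x=\sum_{a}t_a(e_{i_a}-e_{j_a})$ uniquely, and orthogonality again yields $t_k=\bar{e}_{I_k}\cdot x\ge 0$. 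Hence $x\in\langle e_{i_1}-e_{j_1},\ldots,e_{i_{n-1}}-e_{j_{n-1}}\rangle_+$.

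There is no real obstacle here; everything reduces to the definition of the orthogonal dual basis. The only thing that needs attention is to make sure the reader sees that the same vectors $\bar{e}_{I_k}$ serve two roles: dualizing the spanning rays of the cone, and supplying the half-space description on its boundary facets. In the language of Definition \ref{defn: duality for cones} and Remark \ref{rem: properties of dual cones}, this is precisely the statement that the dual of the simplicial cone $\langle v_1,\ldots,v_{n-1}\rangle_+$ is $\langle\bar{e}_{I_1},\ldots,\bar{e}_{I_{n-1}}\rangle_+$, and applying the biduality $(C^\star)^\star=C$ gives the claimed inequality description.
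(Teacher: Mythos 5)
Your argument is correct and is essentially the same as the paper's: both prove the set equality by double inclusion, expanding an arbitrary $x\in V_0^n$ uniquely in the root basis $B_{\mathcal T}$ and reading off the coefficients $t_k=\bar e_{I_k}\cdot x$ via orthogonality. Your brief justification that $B_{\mathcal T}$ is a basis (tree $\Rightarrow$ $n-1$ independent root vectors) and the closing remark identifying this with biduality for simplicial cones are harmless additions; note the paper's final displayed line has a typo ($\bar e_{I_k}$ in place of $e_{i_k}-e_{j_k}$), which your version silently corrects.
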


\begin{proof}
	This essentially follows from Theorem \ref{thm:treeExpansionCompletePlates}; we fill in the details.
	
	Let $\{\bar{e}_{I_1},\ldots, \bar{e}_{I_{n-1}}\}$ be the basis of $V_0^n$ which is orthogonal to $ \{e_{i_1}-e_{j_1},\ldots, e_{i_{n-1}}-e_{j_{n-1}}\}$.
	Then, given $x=\sum_{k=1}^{n-1}t_k(e_{i_k}-e_{j_k})$ with $t_k\ge 0$, we have $\bar{e}_{I_k}\cdot x = t_k \ge 0$.  Conversely, any $x\in V_0^n$ has a unique expansion
	$$x = \sum_{k=1}^{n-1} t_k(e_{i_k}-e_{j_k})$$
	for some $t_1,\ldots, t_{n-1}\in V_0^n$.  If $\bar{e}_{I_k}\cdot x\ge 0$ for each subset $I_k$, $k=1,\ldots, n-1$, define $t_k=\bar{e}_{I_k}\cdot x$, and we have
	$$x=\sum_{i=1}^{n-1}t_k \bar{e}_{I_k}.$$
\end{proof}

We have the following explicit closed form for the functional representation of type $\mathcal{P}_1^n$.  It may be interesting to compare Proposition \ref{prop:functional Tree Identities} with \cite{ReinerValuationCone}, for example Corollary 5.3.
\begin{prop}\label{prop:functional Tree Identities}
	Let $\mathcal{T}$ be a tree with oriented edge set 
$$\{(i_1,j_1),\ldots, (i_{n-1},j_{n-1})\}.$$
	We have the following identities.
	\begin{enumerate}
		\item $$\sum_{\{\mathbf{T}: p_{i_k}<p_{j_k}\}} (-1)^{n-\text{len}(\mathbf{T})}\prod_{i=1}^{k-1}\frac{1}{1-e^{-\sum_{j=1}^i\bar{y}_{T_j}}} = \prod_{i=1}^{k-1}\frac{1}{1-e^{-\sum_{j=1}^i\bar{y}_{I_j}}} $$
	where the sum is over all ordered set partitions of $\{1,\ldots, n\}$ which respect the orientations $p_{i_k}<p_{j_k}$, and the sets $I_1,\ldots, I_{n-1}$ are defined such that $\{\bar{e}_{I_1},\ldots, \bar{e}_{I_{n-1}}\}$ is orthogonal to the basis $ \{e_{i_1}-e_{j_1},\ldots,e_{i_{n-1}}-e_{j_{n-1}}\}$.
	
	\item $$\sum_{\{\sigma\in\symm_n:p_{i_k}<p_{j_k}\}}\frac{1}{(1-x_{\sigma_1}/x_{\sigma_2})\cdots (1-x_{\sigma_{n-1}}/x_{\sigma_n})} = \prod_{k=1}^{n-1} \frac{1}{1-x_{i_k}/x_{j_k}},$$
	where the sum is over all permutations of $(1,\ldots, n)$ which respect the orientations $p_{i_k}<p_{j_k}$.
	\item 	$$\sum_{\{\sigma\in\symm_n:p_{i_k}<p_{j_k}\}}\frac{1}{(x_{\sigma_1}-x_{\sigma_2})\cdots (x_{\sigma_{n-1}}-x_{\sigma_n})} = \prod_{k=1}^{n-1} \frac{1}{x_{i_k}-x_{j_k}},$$
	where again the sum is over all permutations of $(1,\ldots, n)$ which respect the orientations $p_{i_k}<p_{j_k}$.
\end{enumerate}

\end{prop}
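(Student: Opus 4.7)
The plan is to derive all three identities by applying the three functional representations from Definition~\ref{defn:functionalRepresentation} to the characteristic function identity provided by Theorem~\ref{thm:treeExpansionCompletePlates}:
\[
  [\pi_\mathcal{T}] \;=\; \sum_{\pi:\,p_{i_a}<p_{j_a}}(-1)^{n-\operatorname{len}(\pi)}[\pi].
\]
By Proposition~\ref{prop:functionalRepsWellDefined} each representation descends to a well-defined linear map on the appropriate quotient of $\hat{\mathcal{P}}^n$, so I would push this identity through termwise. On the right-hand side this produces exactly the signed sum of rational functions appearing on the left-hand side of (1), (2), or (3). In (2) and (3) the quotient kills every summand whose labelling ordered set partition has a block of size $\geq 2$, so only plates with $\operatorname{len}(\pi)=n$ survive and the sign $(-1)^{n-\operatorname{len}(\pi)}$ becomes $+1$, matching the unsigned sums stated there; in (1) all plates persist and the sign $(-1)^{n-\operatorname{len}(\mathbf{T})}$ is retained.

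It then remains to compute the functional representation of the single cone $[\pi_\mathcal{T}]$ and match it against each stated right-hand side. For this I would invoke Proposition~\ref{prop: LaplaceTransformTree}, which guarantees that $\pi_\mathcal{T}$ is simplicial with generators the tree roots $e_{i_k}-e_{j_k}$ and that its dual cone $\pi_\mathcal{T}^\star$ is simplicial with generators $\bar{e}_{I_1},\ldots,\bar{e}_{I_{n-1}}$. The standard closed forms for Laplace transforms of a simplicial cone whose generators form a lattice basis,
\[
  \mathcal{L}_d\!\left(\langle v_1,\ldots,v_d\rangle_+\right)=\prod_{k=1}^{d}\frac{1}{1-e^{-y\cdot v_k}}
  \qquad\text{and}\qquad
  \mathcal{L}_\mathcal{I}\!\left(\langle v_1,\ldots,v_d\rangle_+\right)=\frac{|\det(v_1,\ldots,v_d)|}{\prod_{k=1}^{d}(y\cdot v_k)},
\]
as referenced in \cite{Barvinok,BarvinokPammersheim,ReinerValuationCone}, immediately yield the desired expressions: substituting $v_k=\bar{e}_{I_k}$ into the discrete Laplace transform of the dual cone gives the right-hand side of (1), substituting $v_k=e_{i_k}-e_{j_k}$ and writing $x_i=e^{-y_i}$ gives (2), and applying $\mathcal{L}_\mathcal{I}$ to the same simplicial cone gives (3) once one checks that the determinant factor is trivial.

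The hard part will be purely bookkeeping. For (1), I need to use the identity $\bar{e}_{S_1\cup S_2}=\bar{e}_{S_1}+\bar{e}_{S_2}$ for disjoint $S_1,S_2$ to rewrite the pairings $y\cdot\bar{e}_{I_k}$ so that they match the partial-sum form $\sum_{j\leq i}\bar{y}_{T_j}$ appearing on both sides of the stated identity, and similarly to identify each summand on the right of Theorem~\ref{thm:treeExpansionCompletePlates} under $\mathcal{L}_d\circ\star$ with the explicit product in Definition~\ref{defn:functionalRepresentation}. For (3), I would verify that any spanning-tree subset $\{e_{i_k}-e_{j_k}\}$ of the simple roots of type $A_{n-1}$ forms a unimodular basis of the root lattice of $V_0^n$, so that the determinant factor in the integral Laplace transform contributes only $\pm 1$, which can then be absorbed into the sign convention already fixed by Definition~\ref{defn:functionalRepresentation}. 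Beyond these routine verifications no new combinatorial input is required: the conceptual content of the proposition is contained entirely in Theorem~\ref{thm:treeExpansionCompletePlates} combined with the standard Laplace transform formulas.
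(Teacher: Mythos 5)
Your proposal follows essentially the same route as the paper: push the plate identity of Theorem~\ref{thm:treeExpansionCompletePlates} through each of the three functional representations (well-defined by Proposition~\ref{prop:functionalRepsWellDefined}), and then evaluate the image of the single simplicial cone $\pi_{\mathcal{T}}$ directly using the closed forms for the discrete and integral Laplace transforms of a simplicial cone. The paper carries this out by summing the geometric series and evaluating the iterated exponential integral explicitly rather than quoting the determinant formula, but the argument is the same.
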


\begin{proof}
	Recall the change of variable $x_i = e^{-y_i}$.
	
	From Theorem \ref{thm:treeExpansionCompletePlates} in $\hat{\mathcal{P}}^n$ we have the identity
	$$\lbrack\langle e_{i_1}-e_{j_1},\ldots, e_{i_{n-1}}-e_{j_{n-1}}\rangle_+\rbrack= \sum_{\pi:p_{i_a} < p_{j_a}}(-1)^{n-\operatorname{len}(\pi)}\lbrack\pi\rbrack,$$
	where the sum is over all ordered set partitions compatible with the orientations 
	$$p_{i_1}<p_{j_1},\ \ldots,\ p_{i_{n-1}}<p_{j_{n-1}}.$$
	
	Now we apply the formulas provided in the definitions of the functional representations, noting that from Proposition \ref{prop:functionalRepsWellDefined}, the functional representations of types $\mathcal{P}^n$, $\hat{\mathcal{P}}_1^n$ and $\mathcal{P}_1^n$ respect the identity from Theorem \ref{thm:treeExpansionCompletePlates}.  Therefore it suffices to compute the images under the functional representations of 
	$$\lbrack\langle e_{i_1}-e_{j_1},\ldots, e_{i_{n-1}}-e_{j_{n-1}}\rangle_+\rbrack.$$

	In the functional representation of type $\mathcal{P}^n$ we have

\begin{eqnarray*}
	\lbrack \langle e_{i_1}-e_{j_1},\ldots,e_{i_{n-1}}-e_{j_{n-1}}\rangle \rbrack  & \mapsto & \mathcal{L}_d\left(\lbrack \langle e_{i_1}-e_{j_1},\ldots,e_{i_{n-1}}-e_{j_{n-1}}\rangle ^\star\right)\\
	& = & \mathcal{L}_d\left(\lbrack\langle \bar{e}_{I_1},\bar{e}_{I_2},\ldots, \bar{e}_{I_{n-1}}\rangle\rbrack \right)\\
	& = & \sum_{k_1,\ldots, k_{n-1}=0}^\infty e^{-(k_1\bar{y}_{I_1})}e^{-(k_2\bar{y}_{I_2})}\cdots e^{-(k_{n-1}\bar{y}_{I_{n-1}})}\\
	& = & \prod_{i=1}^{k-1}\frac{1}{1-e^{-\bar{y}_{I_j}}}\\
\end{eqnarray*}
where the indices $I_1,\ldots, I_{n-1}$ are defined such that $\{\bar{e}_{I_1},\ldots, \bar{e}_{i_{n-1}}\}$ is orthogonal to the basis $\{e_{i_1}-e_{j_1},\ldots,e_{i_{n-1}}-e_{j_{n-1}}\}.$

In the functional representation of type $\hat{\mathcal{P}}_1^n$ we have
\begin{eqnarray*}
	\lbrack \langle e_{i_1}-e_{j_1},\ldots,e_{i_{n-1}}-e_{j_{n-1}} \rbrack & \mapsto & \mathcal{L}_d\left(\lbrack \langle e_{i_1}-e_{j_1},\ldots,e_{i_{n-1}}-e_{j_{n-1}}\rangle\right) \\
	& = & \sum_{k_1,\ldots, k_{n-1}=0}^\infty (x_{i_1}/x_{j_i})^{k_1}\cdots(x_{i_{n-1}}/x_{j_{n-1}})^{k_{n-1}}\\
	& = & \prod_{k=1}^{n-1} \frac{1}{1-x_{i_k}/x_{j_k}}.
\end{eqnarray*}
In the functional representation of type $\mathcal{P}_1^n$ we have
$$\lbrack \langle e_{i_1}-e_{j_1},\ldots,e_{i_{n-1}}-e_{j_{n-1}} \rbrack\mapsto \mathcal{L}_\mathcal{I}\left(\lbrack \langle e_{i_1}-e_{j_1},\ldots,e_{i_{n-1}}-e_{j_{n-1}}\rangle\right)
=\int_{t_1,\ldots, t_{n-1}=0}^\infty e^{-\sum t_k(y_{i_k}-y_{j_k})}dt$$
$$=\prod_{k=1}^{n-1} \frac{1}{y_{i_k}-y_{j_k}}.$$

%
%
\end{proof}

It may be interesting to compare the simplicial cone in Example \ref{example: simplex in root solid triangulation} with the simplices appearing in the triangulation of the type $A_3$ root polytope, in \cite{PostnikovPermutohedra}.  The cone below is also represented functionally (using functions which are of type $\mathcal{P}_q^4$) in \cite{HeSon}.  There the functions are called \textit{Cayley functions}.  The computation is included below.
\begin{example}\label{example: simplex in root solid triangulation}
	Let $\mathcal{T} = \{(1,3),(2,3),(1,4)\}$ be a tree.  Then we have
	$$\lbrack\langle e_1-e_3,e_2-e_3,e_1-e_4\rangle\rbrack = \lbrack\lbrack 1,2,3,4\rbrack\rbrack + \lbrack\lbrack 1,2,4,3\rbrack\rbrack + \lbrack\lbrack 1,4,2,3\rbrack\rbrack + \lbrack\lbrack 2,1,3,4\rbrack\rbrack + \lbrack\lbrack 2,1,4,3\rbrack\rbrack$$
	$$-\left(\lbrack\lbrack 12,3,4\rbrack\rbrack +\lbrack\lbrack 12,4,3\rbrack\rbrack +\lbrack\lbrack 1,24,3\rbrack\rbrack +\lbrack\lbrack 1,2,34\rbrack\rbrack  + \lbrack\lbrack 2,1,34\rbrack\rbrack \right)$$
	$$+\lbrack\lbrack 12,34\rbrack\rbrack.$$
In the functional representation of type $\mathcal{P}^4$ we have
\begin{small}
	\begin{eqnarray*}
&& 	\frac{1}{\left(1-x_1\right) \left(1-x_1 x_2\right) \left(1-x_1 x_2 x_3\right)} + \frac{1}{\left(1-x_1\right) \left(1-x_1 x_2\right) \left(1-x_1 x_2 x_4\right)} + \frac{1}{\left(1-x_1\right) \left(1-x_1 x_4\right) \left(1-x_1 x_2 x_4\right)} \\
	&  + & \frac{1}{\left(1-x_2\right) \left(1-x_1 x_2\right) \left(1-x_1 x_2 x_3\right)} + \frac{1}{\left(1-x_2\right) \left(1-x_1 x_2\right) \left(1-x_1 x_2 x_4\right)}\\
	&  - & \frac{1}{\left(1-x_1 x_2\right) \left(1-x_1 x_2 x_3\right)}-\frac{1}{\left(1-x_1 x_2\right) \left(1-x_1 x_2 x_4\right)} - \frac{1}{\left(1-x_1\right) \left(1-x_1 x_2 x_4\right)} - \frac{1}{\left(1-x_1\right) \left(1-x_1 x_2\right)} \\
	& - & \frac{1}{\left(1-x_2\right) \left(1-x_1 x_2\right)}\\
	& + & \frac{1}{1-x_1 x_2}\\
	& = & \frac{1-x_1^2 x_2 x_3 x_4}{\left(1-x_1\right) \left(1-x_2\right) \left(1-x_1 x_2 x_3\right) \left(1-x_1 x_4\right)}\\
	& = & \frac{1}{ \left(1-x_2\right) \left(1-x_1 x_2 x_3\right) \left(1-x_1 x_4\right)},
\end{eqnarray*}
since $x_1x_2x_3x_4=1$.
\end{small}

Let us also illustrate the functional representation of type $\mathcal{P}_1^4$.  In that case we have
\begin{small}
	\begin{eqnarray*}
 & & \frac{1}{\left(x_1-x_2\right) \left(x_2-x_3\right) \left(x_3-x_4\right)} + \frac{1}{\left(x_1-x_2\right) \left(x_2-x_4\right) \left(x_4-x_3\right)} + \frac{1}{\left(x_1-x_4\right) \left(x_4-x_2\right) \left(x_2-x_3\right)}\\
& + &  \frac{1}{\left(x_2-x_1\right) \left(x_3-x_4\right) \left(x_4-x_2\right)} +\frac{1}{\left(x_2-x_1\right) \left(x_1-x_4\right) \left(x_4-x_3\right)}\\
& = & \frac{1}{\left(x_1-x_3\right) \left(x_2-x_3\right) \left(x_1-x_4\right)}.
	\end{eqnarray*}
\end{small}
\end{example}

\begin{figure}[h!]
	\centering
	\includegraphics[width=0.5\linewidth]{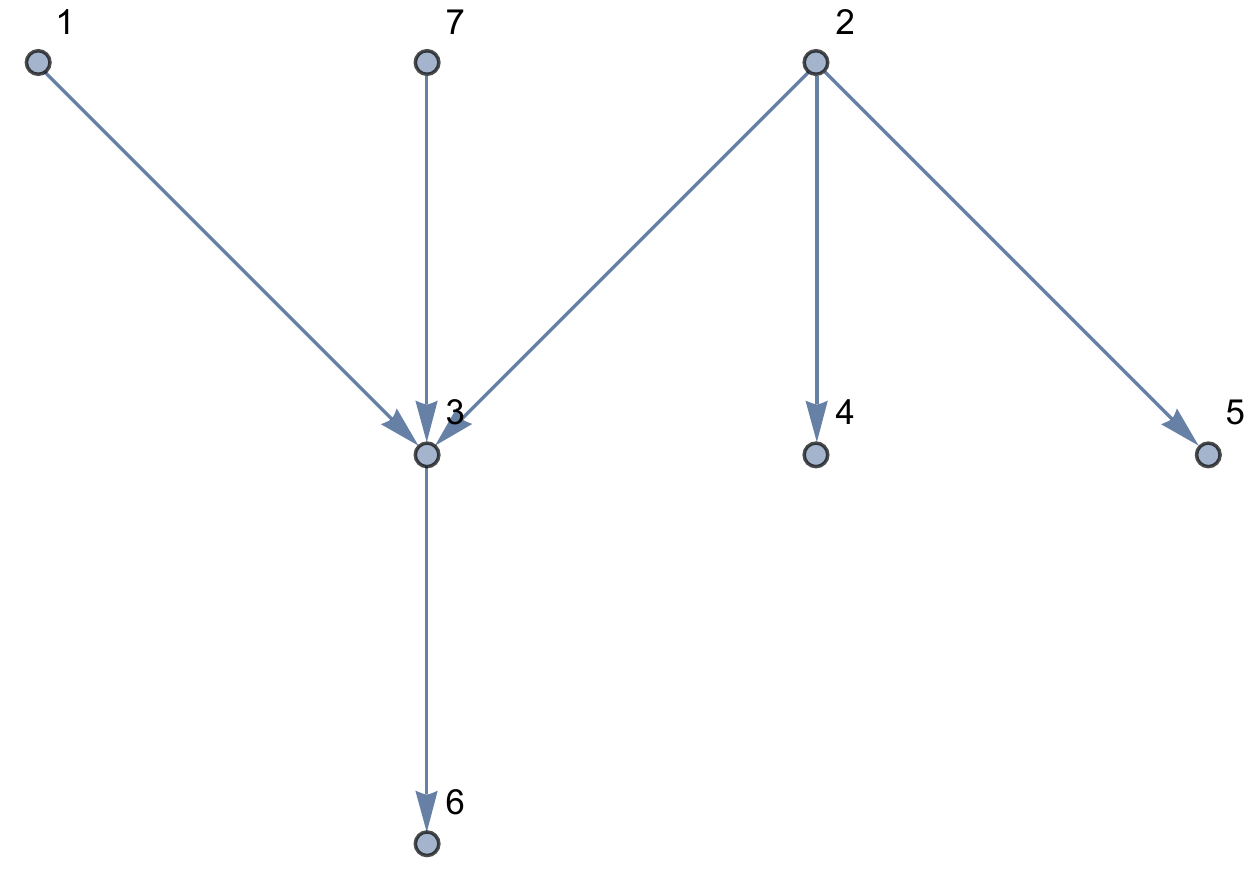}
	\caption{The tree which encodes the orientations in Example \ref{example:7-particle computation}}
	\label{fig:feynman-diagram-polytope-example-tree}
\end{figure}

We present the results of a more involved example which can be worked with the aid of a computer.

\begin{example}\label{example:7-particle computation}
	The directed edges for the tree in Figure \ref{fig:feynman-diagram-polytope-example-tree} are labeled by the ordered pairs 
	$$(1,3),(2,3),(2,4),(2,5),(3,6),(7,3)$$
	and encode the permutohedral cone
	$$\langle e_1-e_3,e_2-e_3,e_2-e_4,e_2-e_5,e_3-e_6,e_7-e_3\rangle_+.$$
	We apply the combinatorial expansion from Theorem \ref{thm:treeExpansionCompletePlates} to give functional representations of this cone of types $\mathcal{P}^n,\hat{\mathcal{P}}_1^n,\mathcal{P}_1^n$, ignoring respectively higher codimensional cones, non-pointed cones and both.  In the functional representation of type $\mathcal{P}_1^n$ we have a sum of fractions over 124 ordered-pair compatible permutations, which eventually collapses to 
	$$\frac{1}{\left(x_1-x_3\right) \left(x_2-x_3\right) \left(x_2-x_4\right) \left(x_2-x_5\right) \left(x_3-x_6\right) \left(x_7-x_3\right)}.$$	
	In the functional representation of type $\hat{\mathcal{P}}_1^n$ we find
	$$\frac{x_3^3 x_4 x_5 x_6}{\left(x_1-x_3\right) \left(x_2-x_3\right) \left(x_2-x_4\right) \left(x_2-x_5\right) \left(x_3-x_6\right) \left(x_7-x_3\right)}$$
	$$=\frac{1}{(1-x_1/x_3)(1-x_2/x_3)(1-x_2/x_4)(1-x_2/x_5)(1-x_3/x_6)(1-x_7/x_3)}.$$
	Recall that there is additional structure that is being missed in $\mathcal{P}_1^n$, and that it can be traced back to variations of a single linear relation.  For $\mathcal{P}_1^n$ we have from the integral Laplace transform
	$$\lbrack\lbrack i,j\rbrack+\lbrack\lbrack j,i\rbrack\rbrack \mapsto \frac{1}{x_i-x_j}+\frac{1}{x_j-x_i}=0,$$
	while for $\hat{\mathcal{P}}_1^n$ we have from the discrete Laplace transform
	$$\lbrack\lbrack i,j\rbrack\rbrack+\lbrack\lbrack j,i\rbrack\rbrack \mapsto \frac{1}{1-x_i/x_j}+\frac{1}{1-x_j/x_i}=1,$$
	where $1$ corresponds to a point, the origin, under the discrete Laplace transform.
	
	Note that in $\hat{\mathcal{P}}^n$ we have an additional lumped term $\lbrack ij\rbrack$, which is the whole line 
	$$\lbrack ij\rbrack = \{x\in V_0^n:x_{i}+x_j=0\text{ and } x_a=0 \text{ for all } a\not\in\{i,j\}\},$$
	hence 
	$$\lbrack\lbrack i,j\rbrack\rbrack+\lbrack\lbrack j,i\rbrack\rbrack=\lbrack\lbrack i,j\rbrack\rbrack\cdot \lbrack\lbrack j,i\rbrack\rbrack+\lbrack\lbrack ij\rbrack\rbrack.$$
	In $\mathcal{P}^n$ the characteristic function  $\lbrack\lbrack i,j\rbrack\rbrack\cdot \lbrack\lbrack j,i\rbrack\rbrack$ of the point $(0,\ldots, 0)$ is mapped to zero, while for $\hat{\mathcal{P}}_1^n$ the lumped term $\lbrack\lbrack ij\rbrack\rbrack$ is mapped to zero.  In $\mathcal{P}_1^n$ both are mapped to zero.  
	
	Starting in $\hat{\mathcal{P}}^n$, applying Theorem \ref{thm:treeExpansionCompletePlates} we have an alternating sum of 653 fractions labeled by ordered set partitions.  After implementing on the computer, repeatedly imposing the condition $x_1\cdots x_7=1$ it collapses to
	$$\frac{x_1 x_2 x_3 x_4^2 x_5^2 x_6^2}{\left(1-x_1\right) \left(1-x_4\right) \left(1-x_5\right) \left(1-x_2 x_4 x_5\right) \left(1-x_6\right) \left(1-x_1 x_2 x_3 x_4 x_5 x_6\right)}$$
	$$=\frac{1}{\left(1-x_1\right) \left(1-x_7\right)\left(1-x_2 x_4 x_5\right) \left(1-x_1 x_2 x_3 x_4 x_5 x_7\right) \left(1-x_1 x_2 x_3 x_5 x_6 x_7\right) \left(1-x_1 x_2 x_3 x_4x_6x_7\right) }.$$
	
	After the change of variable $x_i=e^{-y_i}$, the domain of convergence coincides with the original permutohedral cone
	$$\langle e_1-e_3,e_2-e_3,e_2-e_4,e_2-e_5,e_3-e_6,e_7-e_3\rangle_+$$ 
	and is cut out by the system of inequalities
	\begin{eqnarray*}
		\left\{y\in\mathbb{R}^7:\begin{array}{cccc}
			y_1 \ge 0	& y_7 \ge 0 & y_{245}\ge 0 &  \\ 
			y_{123457} \ge 0 & y_{123567} \ge 0 & y_{123467} \ge 0 &  \\ 
			& y_{1234567} =0& 
		\end{array} \right\},
	\end{eqnarray*}
	which are equivalently determined by requiring nonnegativity of the dot product $y\cdot \bar{e}_I$ (where $y\in V_0^n$) for each $\bar{e}_I$ in the basis 
	$$\left\{\bar{e}_{1},\bar{e}_{7},\bar{e}_{245},\bar{e}_{123457},\bar{e}_{123567},\bar{e}_{123467}\right\}$$
	of $V_0^n$ which is orthogonal to the root basis determined by the ordered pair data as encoded in Figure \ref{fig:feynman-diagram-polytope-example-tree},
	$$e_1-e_3,e_7-e_3,e_2-e_3,e_3-e_6,e_2-e_4,e_2-e_5,$$
	where as usual $\bar{e}_I = \frac{\vert I^c\vert }{n}e_I - \frac{\vert I\vert}{n} e_{I^c}$ for $I$ a proper subset of $\{1,\ldots, n\}$.
\end{example}

\newpage


\begin{thebibliography}{}
		\bibitem{Nonplanar} N. Arkani-Hamed, J.Bourjaily, F. Cachazo, A. Postnikov, and J. Trnka. ``On-shell structures of MHV amplitudes beyond the planar limit.'' Journal of High Energy Physics 6, no. 2015 (2015): 1-16.
		\bibitem{worldsheet} N. Arkani-Hamed, Y. Bai, S. He, and G. Yan. ``Scattering Forms and the Positive Geometry of Kinematics, Color and the Worldsheet.'' arXiv preprint arXiv:1711.09102 (2017).
		\bibitem{Barvinok} A. Barvinok. ``Computing the volume, counting integral points, and exponential sums.'' Discrete \& Computational Geometry 10, no. 2 (1993): 123-141.
		\bibitem{BarvinokPammersheim} A. Barvinok and J. Pommersheim. ``An algorithmic theory of lattice points.'' New perspectives in algebraic combinatorics 38 (1999): 91.
		\bibitem{ReinerValuationCone} A. Boussicault, V. F\'eray, A. Lascoux, and V. Reiner. ``Linear extension sums as valuations on cones.'' J. Algebraic Combin 35, no. 4 (2012): 573-610.
		\bibitem{ConvexOptimization} S. Boyd and L. Vandenberghe. ``Convex optimization.'' Cambridge university press, 2004.
		\bibitem{BroedelMatthesSchlotterer} J. Broedel, N. Matthes, and O. Schlotterer. ``Relations between elliptic multiple zeta values and a special derivation algebra.'' Journal of Physics A: Mathematical and Theoretical 49, no. 15 (2016): 155203.
		\bibitem{CachazoCombinatorialFactorization} F. Cachazo, Combinatorial Factorization, 1710.04558.
		\bibitem{EarlyAnnouncement} N. Early. ``Combinatorics and Representation Theory for Generalized Permutohedra I: Simplicial Plates.'' arXiv preprint arXiv:1611.06640 (2016).
		\bibitem{EarlyAmplitudes2017} N. Early.  ``Generalized Permutohedra, Scattering Amplitudes, and a Cubic Three-Fold.'' arXiv preprint arXiv:1709.03686 (2017).
		Adv. in Math. 63 (1987), no. 3, 301-316. 
		\bibitem{LoopParkeTaylor2} Y. Geyer, L. Mason, R. Monteiro, and P. Tourkine. ``One-loop amplitudes on the Riemann sphere.'' Journal of High Energy Physics 2016, no. 3 (2016): 114.
		\bibitem{MafraSchlottererCohomology}C. Mafra and O. Schlotterer. ``Cohomology foundations of one-loop amplitudes in pure spinor superspace.'' arXiv preprint arXiv:1408.3605 (2014).
		\bibitem{NewBCJ} S. He, O. Schlotterer, and Y. Zhang. ``New BCJ representations for one-loop amplitudes in gauge theories and gravity.'' arXiv preprint arXiv:1706.00640 (2017).
		\bibitem{HeSon} H. Gao, S. He, and Y. Zhang. ``Labelled tree graphs, Feynman diagrams and disk integrals.'' Journal of High Energy Physics 2017, no. 11 (2017): 144.
		\bibitem{ThibonMould} F. Menous, J.-C. Novelli, and J.-Y. Thibon. ``Mould calculus, polyhedral cones, and characters of combinatorial Hopf algebras.'' Advances in Applied Mathematics 51, no. 2 (2013):177-227.
		\bibitem{oeis} OEIS Foundation Inc. (2011), The On-Line Encyclopedia of Integer Sequences, http://oeis.org.
		\bibitem{OcneanuCommunication} A. Ocneanu.  Private communication. 
		\bibitem{OrlikTerao} P. Orlik and H. Terao, Commutative algebras for arrangements. Nagoya Math. J. 134 (1994), 65--73.	
		\bibitem{PostnikovPermutohedra} A. Postnikov. ``Permutohedra, associahedra, and beyond.'' International Mathematics Research Notices 2009.6 (2009): 1026-1106.
		\bibitem{FacesPermutohedra} A. Postnikov, V. Reiner, and L. Williams. ``Faces of generalized permutohedra.'' Doc. Math 13, no. 207-273 (2008): 51.
		\bibitem{ReutenauerFreeLieAlgebras} C. Reutenauer. ``Free Lie Algebras.'' Handbook of algebra 3 (2003): 887-903.
		\bibitem{LinearExtensionsValuation}  A. Boussicault, V. F\'eray, A. Lascoux, and V. Reiner. ``Linear extension sums as valuations on cones.'' J. Algebraic Combin 35, no. 4 (2012): 573-610.
		\bibitem{Stanley EC1} R.P. Stanley, Enumerative Combinatorics, Vol. 1, Cambridge University Press, Cambridge, 1997, With a foreword by Gian-Carlo Rota, Corrected reprint of the 1986 original.
	\end{thebibliography}
\end{document}